\numberwithin{equation}{section}
\numberwithin{figure}{section}
\theoremstyle{plain}
\newtheorem{thm}{\protect\theoremname}[section]
\theoremstyle{plain}
\newtheorem{rem}[thm]{\protect\remarkname}
\theoremstyle{plain}
\newtheorem{cor}[thm]{\protect\corollaryname}
\theoremstyle{plain}
\newtheorem{prop}[thm]{\protect\propositionname}
\theoremstyle{remark}
\newtheorem{claim}[thm]{\protect\claimname}
\theoremstyle{plain}
\newtheorem{lem}[thm]{\protect\lemmaname}
\newtheorem{maintheorem}{Theorem}
\newtheorem{maincor}[maintheorem]{Corollary}
\newtheorem{hypothesis}{Hypothesis}
\theoremstyle{definition}{

\newtheorem*{definition*}{Definition}
}
\newcommand{\bm}{{\mathbf{m}}}
\newcommand{\bu}{{\mathbf{u}}}
\newcommand{\tbm}{{\tilde{\mathbf{m}}}}
\newcommand{\balpha}{{\boldsymbol{\alpha}}}
\newcommand{\bgamma}{{\boldsymbol{\gamma}}}
\newcommand{\bbeta}{{\boldsymbol{\beta}}}
\newcommand{\btau}{{\boldsymbol{\tau}}}
\newcommand{\bzeta}{{\boldsymbol{\zeta}}}
\newcommand{\bLambda}{{\boldsymbol{\Lambda}}}
\newcommand{\bxi}{{\boldsymbol{\xi}}}
\newcommand{\bA}{{\mathbf{A}}}
\newcommand{\ba}{{\mathbf{a}}}
\newcommand{\bt}{{\mathbf{t}}}
\newcommand{\bs}{{\mathbf{s}}}
\newcommand{\bh}{{\mathbf{h}}}
\newcommand{\bM}{{\mathbf{M}}}
\newcommand{\bZ}{{\mathbf{Z}}}
\newcommand{\bC}{{\mathbf{C}}}
\newcommand{\cX}{{\mathcal{X}}}
\newcommand{\cM}{{\mathcal{M}}}
\newcommand{\one}{{\mathbf{1}}}
\newcommand{\abbr}[1]{{\sc\lowercase{#1}}}
\newcommand{\bS}{{\mathbb{S}}}
\newcommand{\bR}{{\mathbb{R}}}
\newcommand{\bP}{{\mathbb{P}}}
\newcommand{\tbP}{{\tilde{\mathbb P}}}
\newcommand{\bE}{{\mathbb{E}}}
\newcommand{\tbE}{{\tilde{\mathbb E}}}
\newcommand{\cN}{{\mathcal N}}
\newcommand{\cE}{{\mathcal E}}
\newcommand{\Cmu}{C_\mu}
\newcommand{\CA}{C_{\mathbf A}}
\newcommand{\CtA}{C_{\tilde{\mathbf A}}}
\newcommand{\eps}{\varepsilon}
\newcommand{\bJ}{{\mathbf{J}}}
\newcommand{\tbJ}{{\tilde{\mathbf{J}}}}
\newcommand{\tbA}{{\tilde{\mathbf{A}}}}
\newcommand{\tbX}{{\tilde{\mathbf{X}}}}
\newcommand{\bx}{{\mathbf x}}
\newcommand{\bX}{{\mathbf{X}}}
\newcommand{\bG}{{\mathbf{G}}}
\newcommand{\bB}{{\mathbf{B}}}
\newcommand{\cY}{{\mathcal{Y}}}
\newcommand{\cL}{{\mathcal{L}}}
\newcommand{\bsigma}{{\boldsymbol{\sigma}}}
\newcommand{\bvsigma}{{\boldsymbol{\varsigma}}}
\newcommand{\bSigma}{{\boldsymbol{\Sigma}}}
\providecommand{\claimname}{Claim}
\providecommand{\corollaryname}{Corollary}
\providecommand{\lemmaname}{Lemma}
\providecommand{\propositionname}{Proposition}
\providecommand{\remarkname}{Remark}
\providecommand{\theoremname}{Theorem}
\begin{document}
\title[Universality for diffusions interacting through a random matrix]{
Diffusions interacting through a random matrix:\\ 
universality via stochastic Taylor expansion}

\vspace{-.25cm}
\author{Amir Dembo}
\address{A.\ Dembo \hfill \break 
Department of Statistics and Department of Mathematics\\ Stanford University\\ Stanford, CA.}
\email{adembo@stanford.edu}

\author{Reza Gheissari}
\address{R.\ Gheissari\hfill\break
Departments of Statistics and EECS \\ University of California, Berkeley\\ Berkeley, CA.}
\email{gheissari@berkeley.edu}

\thanks{
\newline
{\bf AMS (2010) Subject Classification:}
Primary: 60J60, Secondary:  60B20, 60J35,  60K35, 82C44.
\newline
{\bf Keywords:} Stochastic differential equations, Universality, Markov semi-group, Random matrices,
Disordered systems, Langevin dynamics, Gradient flows.}

\begin{abstract}
\vspace{-.25cm}
Consider $(X_{i}(t))$ solving a system of $N$ stochastic differential
equations interacting through a random matrix $\mathbf J = (J_{ij})$ with independent (not necessarily identically distributed) random coefficients. We show that the trajectories of averaged observables of $(X_i(t))$,
initialized from some $\mu$ independent of~$\mathbf J$, are universal, i.e.,  only
depend on the choice of the distribution $\mathbf{J}$ through its
first and second moments (assuming e.g., sub-exponential tails).
We take a general combinatorial approach to proving universality for
dynamical systems with random coefficients, combining a stochastic
Taylor expansion with a moment matching-type argument. Concrete settings for which our results imply universality include aging in the spherical SK spin glass, and Langevin dynamics and gradient flows for symmetric and asymmetric Hopfield networks. 
\end{abstract}

\maketitle

\vspace{-.75cm}

\section{Introduction}
Markov processes with random coefficients arise in numerous contexts: e.g., dynamics of spin glasses, optimization on random landscapes, and learning with neural networks. In many cases, when the underlying randomness is Gaussian, they have been found to give rise to a rich class of behaviors, including metastability, trapping, and aging. In this paper, we analyze a class of stochastic differential systems (\abbr{SDS}'s) in their high dimensional limit, where the couplings are linear and encoded by a random matrix. We show that trajectories of polynomial statistics of the \abbr{SDS} are universal: they
have the same high-dimensional behavior if one replaces the Gaussian interaction matrix by a non-Gaussian one with the same mean and variance profiles. 

Universality, can broadly be described as the phenomenon that for high dimensional ensembles $(X_i)_{i\le N}$ governed by a large number of independent random variables $(Z_i)_{i\le N}$, macrocopic statistics of the ensemble only depend on the laws of $(Z_i)$ through their low moments. Of course, the most classical example of universality is the central limit theorem (\abbr{CLT}), where $(X_i)=(Z_i)$, and the statistic is the normalized sum. Slightly more involved examples are  invariance principles, where the limiting Brownian motion only depends on the distribution of the random walk increments through its first and second moments.    

Lindeberg's classical proof of the \abbr{CLT} iteratively replaces $Z_i$ with $\tilde Z_i$ (Gaussian with the same mean and variance) and shows that the cumulative effect of these replacements is microscopic. This approach has proven to be very robust, and has been generalized e.g., to polynomials $f(Z_1,\ldots,Z_N)$ in~\cite{Rotar,MOO05} and more generally, smooth functions with bounded derivatives in~\cite{ChatterjeeLindeberg,ChatterjeeInvariance}. A more combinatorial approach is a \emph{moment matching} argument to compare moments of statistics $f(X_1,\ldots,X_N)$ to moments of $f(\tilde X_1,\ldots,\tilde X_N)$ and showing that the difference is dominated by the differences in the first few moments of $Z_i$ and $\tilde Z_i$. 

With these approaches, universality has been proven in a wide range of ensembles where the relationship between $(X_i)$ and $(Z_i)$ is more complicated. A fundamental example is when $(X_i)$ are the eigenvalues of a random matrix with entries $(Z_i)$. There, the empirical distribution of $(X_i)$ is well-known to have the same limit (e.g., the semi-circle law for Wigner matrices~\cite{Wigner}). In the last decade, remarkably, universality has been found to extend to local statistics of the ensemble $(X_i)$ e.g., typical size of gaps between eigenvalues, and $k$-point correlations. Universality in random matrix theory has been a tremendous success and we cannot hope to do justice to the literature therein; we instead refer to the seminal works~\cite{ErdosShleinYauInventiones,TauVuActa} and the surveys~\cite{TauVuSurvey,ErdosYauBook}. 

Another class of ensembles for which universality has been shown is disordered interacting particle systems from statistical physics, and in particular the family of mean-field spin glass models. A canonical example of these are spin glasses where $N$ particles in states $(X_i)$, interact through a random symmetric coupling matrix (or in the case of higher order interactions, tensor) composed of independent entries $Z_i$. More precisely, with these interactions, they are endowed with an energy landscape, or Hamiltonian, that is topologically complex, and $(X_i)$ are drawn from the corresponding Gibbs distribution. The statistics of $(X_i)$ in such families of spin glasses have been found to exhibit an extremely rich and varied phase diagram featuring phenomena like breaking of ergodicity and replica symmetry~\cite{PanchSKBook}. Most of their analysis, including the calculation of the free energy, and the proof of the celebrated Parisi formula for the overlap distribution, were first carried out in the Gaussian setting~\cite{GuerraToninelli,TalagrandAoM,PanchUlt13}. Talagrand later showed that these also held in the case of Bernoulli $(Z_i)$ in~\cite{TalagrandGaussianBernoulli}; this universality was extended to general $(Z_i)$ as an application of~\cite{ChatterjeeLindeberg}.

The dynamics (Markov processes exploring the Hamiltonian) for such spin glass models are a prototype and motivating force for this paper. The general setting we consider here is that of a system of $N$ linearly coupled \abbr{SDE}'s, where the couplings are encoded in a \emph{random} matrix $\bJ$, and driven by $N$ independent Brownian motions. That is, $\bX_t  = (X_1(t),\ldots,X_N(t))$ is the solution to the \abbr{SDS}   
\begin{align}\label{eq:sds-intro}
    \begin{cases}d\bX_t = \bJ^T \bX_t dt + \boldsymbol h  dt +  {\boldsymbol \Sigma(\bX_t)} d\bB_t \\
    \bX_0 \sim \mu \in \cM_1 (\bR^N)\end{cases}\,,
\end{align}
where $\bJ$ is a random matrix with independent entries (up to, possibly, a symmetry constraint) and variance profile $\bm = (m_{ij})_{i,j}$ scaled such that $\bE[\|\bJ\|_2] =O(1)$, $\bh$ is a bounded drift vector, and 
$\bSigma$ is an affine transform of $\bX_t$. Note that for
$\bSigma(\bX_t)$ non-constant, 
we do not expect 
to have an explicit closed-form solution to~\eqref{eq:sds-intro}.

In the $N\to\infty$ limit, the diffusions of~\eqref{eq:sds-intro} encompass many interesting and well-studied models of Markov processes with random coefficients, and give rise to rich and varied behavior. This includes metastability, aging, and non-Markovian limiting evolution equations, in e.g., randomly coupled (geometric) Brownian motions, and Langevin dynamics and gradient flows for the spherical Sherrington--Kirkpatrick (\abbr{SK}) spin glass and symmetric and asymmetric Hopfield nets~\cite{Hopfield,CrisantiSompolinsky,Kinzel,Hertz-et-al,BAG1}: concrete applications are described in Section~\ref{sec:applications}. In many such examples, the analysis is more tractable when $\bJ$ is Gaussian and one can use tools like Gaussian integration by parts, Girsanov, and the rotational invariance of the Gaussian ensemble.   

In this paper, we develop a simple combinatorial framework for proving universality for the solution trajectories of \abbr{SDS}'s of the form~\eqref{eq:sds-intro}. Before describing our approach, we explain a few difficulties one encounters when trying to prove universality for solutions of randomly coupled dynamical systems, using some of the approaches described above for other universality results. We begin by considering a Lindeberg approach where we examine the effect that re-sampling one $J_{ij}$ has on an averaged statistic $F(t)  = F(X_1(t),\ldots,X_N(t))$. The obstacle in employing such an approach is that changing $J_{ij}$ 
to $\tilde J_{ij}$ on $X_j(t)$, say, beyond affecting the drift 
\[
\sum_{1\le i \le N} J_{ij} X_i(t) + h_j\,,\]
of the $j$-th coordinate of the \abbr{SDS}, 
also induces a \emph{highly non-linear} effect both on $X_j(t)$ and on $X_i(t)$ for all  $i\neq j$. The problem instead lends itself to comparing the effect of $\bJ \to \tbJ$ in a more averaged way. 

An alternative approach would be to use the linear structure of the problem in a strong way, relying on sharp universality results on the spectra of random matrices to study the problem. This approach, while feasible if $\bSigma(\bX_t)$ is constant, requires one to diagonalize the problem without loss of generality---i.e., it requires an assumption of joint rotational invariance for the laws of $(\bX_0, \bJ,\bB)$. In~\cite{BADG01}, such an approach is followed for analyzing the dynamics of the spherical \abbr{SK} model, and their results hold assuming the law of $\bJ$ is invariant under the orthogonal group, and its spectrum satisfies certain large deviation estimates satisfied by the \abbr{GOE}. However, this restriction would not include the cases of e.g., the uniform measures on $[-1,1]^N$ and $\{\pm 1\}^N$ absent the rotational symmetry, and could not include the case of non-constant $\bSigma(\bX_t)$.

Very recently,~\cite{DLZ19} proved a universality result for the dynamics of the \emph{asymmetric} Langevin dynamics for the \emph{soft-spin \abbr{SK} model}. There they used large deviations theory to obtain exponential control on the empirical measure on sample paths---as obtained in the Gaussian setting in~\cite{BAG1,BAG2}---together with sharp control via Girsanov's theorem on the Radon--Nikodym derivative between the Gaussian paths and those driven by non-Gaussian $\bJ$ on short time scales, to show universality for the empirical measure $\cL_N = \frac{1}{N}\sum_{i} \delta_{X_i(t)}$. While such an approach allows for 
a deterministic non-linearity in the drift through a (double-well) confining potential, it cannot 
handle degenerate diffusions, e.g. the gradient flow. Further, the need for 
control on the trajectories at the exponential scale forces
\cite{DLZ19} to consider only asymmetric i.i.d.\ $\bJ$ (whereby the Radon--Nikodym 
derivative is a product of functions of independent rows of $\bJ^T$).

We introduce a simple combinatorial approach to proving universality for \abbr{SDS}'s of the form of~\eqref{eq:sds-intro}, similar in flavor to the moment method. Namely, we avoid the inherent difficulty of the problem, that the transformation $\bJ \to \tbJ$ affects $X_j(t)$ through both $(J_{ij})_i \to (\tilde J_{ij})_i$ \emph{and} $(X_i(t))_i \to (\tilde X_i(t))_i$. We do so by Taylor expanding the semigroup $P_t f= \bE_{\bX_0} [f(\bX_t)]$ in powers of the infinitesimal generator: each term appearing in this expansion is a polynomial in $(x_i),(J_{ij})$ evaluated at $\bX_0$ where, crucially, the initial data is independent of $J_{ij}$. One then finds that on order one timescales, the predominant contribution to $\bE[P_t f]$ is from polynomials whose degree in $(J_{ij})_{i,j}$ is at most two. We refer to Section~\ref{sec:proof-strategy} for more details.

This approach works quite generally, and is robust to symmetric and asymmetric choices of $\bJ$ with non-homogenous means and variances, and general choices of diffusion coefficients in~\eqref{eq:sds-intro}, including $\bSigma(\bX_t)$ non-constant making the diffusion non-linear, and $\bSigma \equiv 0$ corresponding to a deterministic dynamical system. Lastly, the analysis works for arbitrary initialization independent of $\bJ$.  
The assumption of linear drift is, of course, important, and one would like to be able to drop it. We emphasize, though, that this is primarily used in order to justify the absolute convergence of the Taylor expansion of the semigroup, which one could hope to justify by other means for higher order diffusions given that a strong solution exists; the remaining combinatorial framework for moments of the generator may then generalize. We discuss this in Remark~\ref{rem:p-spin}.

We end this section by mentioning two recent results~\cite{BLM15,ChenLamUniversality} showing universality for a Lipschitz family of approximate message passing (\abbr{AMP}) algorithms---a discrete-time state evolution that has found many applications to inference and optimization in high dimensions. Some of the ideas there appear similar in spirit to our approach, using a combinatorial approach to control moments of the final state of the \abbr{AMP}. 
All the same, the general setting of~\eqref{eq:sds-intro} introduces many key differences e.g., the diffusions of~\eqref{eq:sds-intro} are in general non-linear, not globally Lipschitz, and have a built-in stochasticity.

\subsection{Setup: diffusions with random linear interactions}
Consider an $N$-dimensional stochastic differential system
with a mixture of random and deterministic linear interactions, along with possibly, some constant drifts. More precisely,
consider the \abbr{SDS} $\bX_t := (X_{i}(t))_{i=1}^{N}$ driven by the following parameters. 

Suppose that for some matrix $\bm = (m_{ij})_{i,j}$ we have random interactions
given by the random matrix  
\begin{align*}
\bA = (A_{ij})_{1\leq i,j\leq N}\,,\qquad\text{where } \qquad
& \bE[A_{ij}]=0, \quad\bE [A_{ij}^{2}]=m_{ij} \,.
\end{align*}
We assume that the entries $A_{ij}$ are either fully independent, or are independent up to a symmetry constraint $A_{ij} = A_{ji}$. Let $\bP_{\bA}$ be the law of $\bA$.
In order to scale the interactions to have an order one cumulative effect, it will be convenient to work with the rescaled interactions matrix $\bJ$ given by 
\[
\bJ := N^{-1/2} \bA\,.
\]
We then denote the distribution induced by $\bP_\bA$ on $\bJ$ by $\bP_{\bJ}$.

We further consider
additional deterministic interactions satisfying, for some constant $C_{\bLambda}<\infty$, 
\begin{align*}
\bLambda = (\Lambda_{ij})_{1\leq i,j\leq N}\,,\quad\text{where } \quad & 
\max_i \| (\Lambda_{ij})_j \|_1 \le C_{\bLambda} \;\; \mbox{ and } \;\;
\sup_{i,j}|\Lambda_{ij}|
\le \frac{C_{\bLambda}}{\cN_{\bLambda}}\;\; \mbox{ for } \;\; \cN_{\bLambda}:= \max_j \|(\Lambda_{ij})_i\|_{0}
\end{align*}
(the $\|\cdot \|_0$-norm of a vector is its number of non-zero entries).
We also consider external drift parameters 
\begin{align*}
\bh = (h_{i})_{1\leq i\leq N}\,,\qquad\text{where } \quad & \sup_{i \leq N}|h_{i}| \le C_{\bh}
\quad \mbox{ for a constant } \;\; C_{\bh}<\infty\,,
\end{align*}
and diffusion coefficients $\bSigma (\bX_t)$ governed by the 
matrix
$$
\bsigma = (\sigma_{ij})_{0\le i \le N, 1\le j \le N} \;\; \mbox{where } \;\; 
\sup_{1\le j\le N} |\sigma_{0j}| \le C_{\bsigma}  \quad \mbox{and} \;\;
\sup_{1\le i,j\le N} |\sigma_{ij}| \le \frac{C_{\bsigma}}{\cN_{\bsigma}}\quad \mbox{for} \quad\cN_{\bsigma} : = \max_j \|(\sigma_{ij})_i\|_0 \,.
$$

The \abbr{SDS} $(\bX_t)_{t\geq 0}=(X_{1}(t),X_{2}(t),\ldots,X_{N}(t))_{t\geq 0}$ 
initialized from some random $\bX_0$ distributed according to a product measure $\mu$ 
is driven by a standard Brownian motion $\bB_t = (B_1(t),\ldots,B_N(t))$ as follows
\begin{equation}\label{eq:sde-def}
dX_{j}(t)=  \sum_{i=1}^N J_{ij}X_{i}(t) dt +\sum_{i=1}^N \Lambda_{ij}X_{i}(t) dt +h_{j}dt+
 \sqrt{2} \Big( \sum_{i=0}^N \sigma_{ij} X_i(t)\Big)
dB_{j}(t) \,,
\qquad X_i(0)\sim \mu_i \,,
\end{equation}
where for ease of notation, we hereon set $X_0(t) \equiv 1$ so that $(\sigma_{0j})_{j\ge 1}$ capture the constant diffusion coefficients. We denote the martingale part of $\bX_t$ by  
\begin{align}\label{eq:martingale-def}
\bM_t = (M_j(t))_{j\le N}\,, \qquad\mbox{where}\qquad dM_j(t) = 
\sqrt{2} \Big( \sum_{i=0}^N \sigma_{ij} X_i(t)\Big)
dB_j(t)\,.
\end{align}
The process $\bX_t$ is well-defined for a.e.\ $\bJ$ and all $t \geq 0$ (as we have finite, 
possibly $N$-dependent operator norms $\|\bJ\|_{2}$, $\|\bLambda\|_2$ and $\|(\sigma_{ij})_{i \ge 1}\|_2$, see e.g., \cite[Theorem 5.2.1]{Oksendal}). 

\medskip
\noindent \textbf{Notational comment.} There are three distinct sources of randomness above dictating the law of the solution $\bX_t$ to~\eqref{eq:sde-def}: the law of the interaction matrix $\bP_{\bJ}$, the law of the Brownian motions, denoted $\bP_{\bB}$, and the law of the initial data $\mu$---each of these are product measures and we do not distinguish notationally between the law of the individual entries of $\bJ,\bB$ or $\bX_0$ and the ensembles. 

In proving universality, we consider the difference between $\bP_{\bJ}, \bP_{\tbJ}$ induced by two different distributions $\bP_{\bA}$ and $\bP_{\tbA}$ over mean-zero random matrices $\bA, \tbA$ with independent entries (possibly up to symmetry), having 
matching variance profiles $\bm = \tbm$. For ease of notation, we will henceforth use  
\[
\bP = \mu \otimes \bP_\bJ \otimes \bP_\bB\,, \qquad \mbox{and}\qquad \tbP = \mu \otimes \bP_{\tbJ} \otimes \bP_\bB\,,
\]
and denote the corresponding expectations $\bE$ and $\tbE$ respectively.

\subsection{Main results}
We begin by describing the observables to which our universality results apply. The building blocks of these observables are chosen among the family of vector valued functions,
\begin{align}\label{eq:building-blocks}
 {\mathfrak F} = \begin{cases} \one_t =(1,\ldots,1) \\  \bX_t = (X_1(t) ,\ldots,X_N(t)) \\  \bG_t = (G_1(\bX_t),\ldots,G_N(\bX_t)) \\ \bM_t = (M_1(t),\ldots,M_N(t))  \end{cases}, \qquad \mbox{where} \qquad G_j(\bx) = \sum_{i=1}^N J_{ij} x_i\,.
\end{align}
We establish universality in the mean for weighted empirical averages of monomials 
in functions from ${\mathfrak F}$ evaluated at 
a finite collection of times.  Specifically, fixing an $m$-tensor $\ba = (a_{i_1,\ldots,i_m})$ with entries bounded by $C_\ba$ and a $p$-tuple of times $\bt = (t_1,\ldots,t_p)$, for every $\ell \le m$, fix $p$ observables $\cY^{(\ell,1)},\ldots,\cY^{(\ell,p)} \in {\mathfrak F}$ which are to be evaluated at these $p$ times. That is,
\begin{align}\label{eq:F-general-form}
F (\bt) = \frac{1}{N^m} \sum_{i_1,\ldots,i_m \le N}  a_{i_1,\ldots,i_m} F^{(1)}_{i_1}(\bt) \cdots F^{(m)}_{i_m}(\bt)\,, \qquad \mbox{where}\qquad F^{(\ell)}_i(\bt) =  \cY^{(\ell,1)}_{i} (t_1) \cdots \cY^{(\ell, p)}_{i} (t_p)\,.
\end{align}
\medskip
We also need to add a sub-exponential tail constraint on $\mu$ and $\bP_\bA$ beyond the minimal assumptions of zero-mean and matching variances of $\bP_{\bA}$ and $\bP_{\tbA}$; this is henceforth referred to as Hypothesis~\ref{hyp:1}.

\begin{hypothesis}\label{hyp:1}
Assume that the law $\mu$ is a product of $\mu_i$ of $X_i(0)$ having finite moments of all order, which are  bounded
uniformly over $i$ and $N$. That is, there exist $\Cmu(r) \ge 1$ such that for any $r$ finite,
\begin{equation}\label{def:Cmu}
\sup_{N} \sup_{i\le N}  \bE [|X_i(0)|^r]  \le \Cmu(r) \,.
\end{equation}
Further assume $\bP_\bA$ has uniformly bounded 
exponential tails, i.e., the following equivalent properties hold: 
\begin{align}\label{eq:sub-exp-mom}
 \sup_{N}  \sup_{i,j\le N} \bE [ e^{\eps |A_{ij}|}] &  < \infty\,, \qquad \qquad \qquad \quad \quad \quad \,\,\,
 \qquad \mbox{for some } \;\; \varepsilon>0 \,, \\
 \sup_{N}  \sup_{i,j\le N} \bE[  |A_{ij}|^{\ell} ] &  \le \,  (\ell-1)! \, \CA^{\ell/2}  \,, \qquad \forall \ell \ge 1\quad \mbox{and some } \;\; C_\bA<\infty \,.
\label{eq:unif-mom-bd}
\end{align}
\end{hypothesis}

For ease of notation for dependencies on constants, we denote by 
$\bC_\star :=  \max\{\CA^{1/2},\CtA^{1/2},C_\bLambda,C_\bh, C_\bsigma^2
\}$ (where $C_{\tbA}$ is the constant $C_\bA$ with respect to distribution $\bP_{\tbA}$), and state our first result, on universality at the level of the mean 
(hence also of moments), for observables~\eqref{eq:F-general-form}. 

\begin{maintheorem}\label{thm:main-expectations}
 Let $\mu, \bP_\bA, \bP_{\tbA}$ satisfy Hypothesis~\ref{hyp:1} and suppose that 
 $\bA, \tbA$, symmetric or independent, are mean-zero of 
 matching variance profile $\bm= (m_{ij})_{i,j}$. For any $T,m,p<\infty$ and 
$\ba \in \bR^{N^m}$ with $\|\ba\|_\infty \le C_{\ba}$, there exists 
$C (T,m,p,C_\ba, \bC_\star,\Cmu)<\infty$, such that for every $N$ and
$F$ as in~\eqref{eq:F-general-form} with $(\cY^{(\ell,1)},\ldots, \cY^{(\ell, p)}) \in {\mathfrak F}$,
\begin{align*}
\sup_{\bt \in [0,T]^p} \big|\bE [ F(\bt)] - \tbE [ F(\bt)] \big|  \le C N^{-1/2}\,.
\end{align*}
In particular, 
$\big| \bE [ F(\bt)] - \tbE [ F(\bt)] \big|  \to 0$ as $N \to \infty$, uniformly in 
$\bt \in [0,T]^p$.
\end{maintheorem}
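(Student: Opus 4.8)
The plan is to expand the semigroup $P_t f = \bE_{\bX_0}[f(\bX_t)]$ as a convergent power series in $t$ with coefficients given by iterated applications of the infinitesimal generator $\mathcal L$ of~\eqref{eq:sde-def}, so that each term becomes a polynomial in the variables $(x_i)$ and the couplings $(J_{ij})$ evaluated at the $\bJ$-independent initial data $\bX_0$. Concretely, writing $\mathcal L = \mathcal L_{\bJ} + \mathcal L_{\bLambda} + \mathcal L_{\bh} + \mathcal L_{\bsigma}$ according to the four types of terms in the drift and diffusion, the observable $F(\bt)$ of~\eqref{eq:F-general-form} — being a product over the $m$ tensor slots of products over the $p$ time slots of building blocks from $\fF$ — gets expanded, via the Markov property applied successively at $t_1 < t_2 < \dots < t_p$ (after reordering), into a (formally infinite) sum of monomials of the form $\ba$-weighted averages of $\prod (\text{powers of } J_{ij}) \cdot \bE_{\mu}[\prod (\text{powers of } X_i(0))]$. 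The first step is therefore to set up this stochastic Taylor expansion rigorously: one must check absolute convergence of the series for every fixed $N$ on the time horizon $[0,T]$, which is exactly where the linearity of the drift (hence polynomial growth rates controlled by $\|\bJ\|_2,\|\bLambda\|_2,\|\bsigma\|_2$) and the sub-exponential moment bounds~\eqref{eq:sub-exp-mom}–\eqref{eq:unif-mom-bd} together with the uniform moment bound $\Cmu(r)$ on $\mu$ are used; a Gronwall-type estimate on $\bE[\|\bX_t\|_{2k}^{2k}]$ should yield a bound of the form $(Ck)!$-growth that makes the generator series converge.

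Next I would take expectation over $\bP_{\bJ}$ (resp.\ $\bP_{\tbJ}$) term by term — justified by the absolute convergence above — and use independence of $\bJ$ from $\bX_0$ to factor each term as $\bE_{\bJ}[\prod J_{ij}^{\,a_{ij}}] \cdot \bE_{\mu}[\cdots]$. Here the combinatorial heart enters: because $\bJ = N^{-1/2}\bA$ with $\bA$ mean-zero, any factor $\bE_{\bJ}[\prod_{\text{edges}} J_{ij}^{a_{ij}}]$ vanishes unless every edge $(i,j)$ appearing does so with multiplicity $\ge 2$; combined with the $N^{-1/2}$ normalization and the summation over the $O(N^{\text{(number of distinct indices)}})$ choices of indices, a power-counting argument shows that the only monomials contributing at order $N^0$ are those in which every coupling appears with multiplicity exactly $2$ — i.e.\ the total degree in $\bJ$ is even and the $\bJ$-factor reduces to a product of second moments $\bE[A_{ij}^2]=m_{ij}$, which are \emph{identical} for $\bA$ and $\tbA$. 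All monomials of $\bJ$-degree $\ge 3$, or with some edge of odd/higher multiplicity, are shown to contribute $O(N^{-1/2})$ (the half-integer power coming from the symmetric case, where an edge used twice via the constraint $A_{ij}=A_{ji}$ still only saves one index but may mismatch; the cleanest bookkeeping is to bound the total such contribution by a convergent series times $N^{-1/2}$). The sub-exponential moments are what let us sum the error over all (arbitrarily high) degrees.

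The third step is to make the truncation quantitative: one splits the generator series at a large but fixed order $K = K(T,m,p)$, bounds the tail $\sum_{k>K}$ uniformly in $N$ using the convergence estimate from step one, and for the finite initial segment applies the power-counting of step two to conclude that $\bE[F(\bt)]$ and $\tbE[F(\bt)]$ agree up to $O_K(N^{-1/2})$; sending first $N\to\infty$ and noting $K$ was fixed gives the claim, with the stated uniformity over $\bt\in[0,T]^p$ coming from the fact that all bounds depend on $\bt$ only through $T$. I expect the main obstacle to be step two's power-counting in the \emph{symmetric} case together with the presence of the building block $\bG_t$ (whose very definition $G_j(\bx)=\sum_i J_{ij}x_i$ injects extra factors of $\bJ$ directly into the observable, not just through the generator): one must carefully track how an edge can be "shared" between a $\bG$-factor and a generator-produced factor, ensure no spurious $N^{+1/2}$ arises from a repeated index along a symmetric edge, and verify that the error terms, though numerous, are summable. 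A secondary technical point is handling the diffusion part $\mathcal L_{\bsigma}$ (including the martingale observable $\bM_t$, whose bracket contributes $\sigma$-terms) within the same scheme — but since $\bsigma$ carries the normalization $\mathcal N_{\bsigma}^{-1}$ and is $\bJ$-free, these terms only help.
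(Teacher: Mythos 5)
Your outline follows essentially the same route as the paper: stochastic Taylor expansion of the semigroup $P_t = e^{tL}$, absolute convergence of the generator series for fixed $N$ (justified by linearity and Hypothesis~\ref{hyp:1}), interchange of expectation and sum, and then a combinatorial moment-matching argument exploiting that $\bP_{\bA}$ and $\bP_{\tbA}$ have matching first and second moments. You also correctly flag the two real technical wrinkles: the building block $\bG_t$ injects extra $\bJ$-factors directly into the observable (the paper handles this by carrying a fixed tuple $\balpha$ of $\bJ$-pairs inside the monomials $f_{\balpha,\bgamma}$), and the symmetric case requires identifying $(ij)$ with $(ji)$ when counting multiplicities.

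Two points where your proposal drifts from what is needed. First, a minor imprecision in step two: it is not that "$\bJ$-degree $\ge 3$" monomials contribute to the difference --- a monomial with two distinct edges each appearing exactly twice has degree four yet its expectation cancels, since it reduces to a product of matched variances. The correct (and the paper's) criterion is that a monomial can contribute to $\bE - \tbE$ only if \emph{every} edge appears at least twice \emph{and} at least one edge appears at least three times (Lemma~\ref{lem:monomial-counting}). It is this conjunction that forces one extra application of $L_{\bJ}$ beyond what is needed to pair up the remaining edges, and that is precisely the source of the surviving $N^{-1/2}$.

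Second, and more substantively, your step three does not deliver the claimed rate. Truncating at a fixed order $K$, bounding the tail "uniformly in $N$" by some $\eta_K$, and then sending $N\to\infty$ followed by $K\to\infty$ proves $|\bE[F(\bt)]-\tbE[F(\bt)]|\to 0$, but not the quantitative $O(N^{-1/2})$ bound stated in Theorem~\ref{thm:main-expectations}. The paper avoids truncation entirely: the per-$k$ difference $|\bE[L^k f]-\tbE[L^k f]|$ already carries the factor $N^{-(s+I^+_{\balpha,1})/2}$ from the counting lemma, and the prefactors are summable in $k$ uniformly in $N$ (Propositions~\ref{prop:monomial-counting-crude} and~\ref{prop:difference-in-expectations}, via partition-number asymptotics and the geometric factor $(\delta\sqrt N)^{-k_{\bJ}}$). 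Thus the $N^{-1/2}$ factor pulls out of the whole series. To salvage a truncation argument you would need to show the tail $\sum_{k>K}$ decays in $N$ (or take $K = K(N)\to\infty$ with a quantitative tail bound), but this is strictly more work than summing the full series with the per-term rate already in hand. You should also make explicit the reduction of the martingale block $\bM_t$ via It\^o, $M_{\xi}(u) = X_{\xi}(u) - \int_0^u (L x_{\xi})(\bX_\tau)\,d\tau$, so that such observables become time-integrals of polynomials in $(\bJ,\bX)$ handled by the same expansion (Proposition~\ref{prop:bm-difference-in-expectations}); dismissing $\bM_t$ as "only helping" glosses over this step.
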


Theorem~\ref{thm:main-expectations} follows from a more general result bounding the difference in expectations for each individual monomial $F_i^{(\ell)}$ from~\eqref{eq:F-general-form} with $(\cY^{(\ell,1)},\ldots, \cY^{(\ell, p)}) \in {\mathfrak F}$. As a special case,
see Proposition~\ref{prop:difference-in-expectations},
 we find that the moments of each 
 spin $X_i(t)$ are universal. Specifically, for every fixed $k$,
\begin{equation}
\sup_{t\in [0,T]} \max_{1\le i \le N} \big| \bE[X_i(t)^k] - \tbE[X_i(t)^k] \big|  = O(N^{-1/2})\,.
\end{equation}

For a more restricted class of observables, with additional restrictions on the distributions $\mu$ and $\bP_{\bA}$ and $\bP_{\tbA}$, we extend the above to almost sure and $L^q$ convergence for the observable trajectories. Precisely, we restrict the observables of~\eqref{eq:F-general-form} to 
$m=1$ and $p=2$, leaving, the following quadratic observables
\begin{align}\label{eq:possible-observables}
F(\bt)  = F_{\cY, \cY',\ba} ( \bX_t, \bX_{t'}) := \frac 1{N}  \sum_{i=1}^N a_i \cY_i(t) \cY_i' (t') \,.
\end{align}
In order to extend Theorem~\ref{thm:main-expectations} to a convergence for the trajectories of these observables, we further need to assume that $\bSigma$ is constant, so that $\bM_t$ is just a scaled Brownian motion, and assume the following concentration property on $\mu, \bP_\bA, \bP_{\tbA}$, which we refer to as Hypothesis~\ref{hyp:2}.  

\begin{hypothesis}\label{hyp:2} 
A sequence of probability measures $(\bP^{(n)})_{n\ge 1}$ over $\bZ_n$ in metric spaces 
$(\cX_n,d)$ satisfies \emph{exponential concentration for Lipschitz functions} if there exists some $C>0$ such that for any sequence of $1$-Lipschitz functions $f_n: (\cX_n, d) \to (\bR, |\cdot|)$ and all $\lambda>0$, 
\begin{align}\label{eq:exp-conc}
    \bP^{(n)} \big(|f_n(\bZ_n) - \bE[f_n(\bZ_n)]|>\lambda\big)\leq C\exp(-\lambda/C)\,.
\end{align}
Assume that $\mu,\bP_{\bA}$ respectively satisfy exponential concentration for Lipschitz functions on $\bR^N$ and  $\bR^{N^2}$ (or $\bR^{N(N+1)/2}$ if $\bA$ is symmetric), equipped with their Euclidian norms, for some $C_\mu,C_\bA>0$.
\end{hypothesis}
\begin{rem} Recall, 
from the theory of measure concentration, that Hypothesis~\ref{hyp:2} holds for any distribution on $\bR^n$
which satisfy a Poincar\'e inequality with constant $c>0$ (independent of $n$), namely  for all nice $f$ one 
has that
$\mbox{Var} [ f(\bZ_n) ] \le c \bE[ |\nabla f(\bZ_n)|^2]$ (see~\cite{GromovMilman}). 
By the tensorization of the Poincar\'e inequality, if $\bZ_n = (Z_1,\ldots,Z_n)$, and each of the laws
of $Z_i$ satisfy this inequality, then the product also satisfies it with the worst constant $c$. 
Having here product measures $\mu, \bP_\bA$, the marginal laws can come from any 
distribution satisfying a Poincar\'e inequality in $n=1$. These include (see e.g.,~\cite{Vershynin})
\begin{itemize}
	\item Exponential, Gaussian, and log-concave measures of the form $\exp(- V(x))$ for $V(x)$ strictly convex,
   	 \item Linear functionals of r.v.'s having a Poincar\'e inequality: e.g., the uniform measure on $[-1,1]$.
\end{itemize}
\end{rem}

The next theorem shows that under Hypothesis \ref{hyp:2},
any $F$ of the form \eqref{eq:possible-observables} concentrates around its mean.
\begin{maintheorem}\label{thm:conc}
Suppose $\mu$, $\bP_\bA$ satisfy Hypotheses~\ref{hyp:1}--\ref{hyp:2} and 
the diffusion coefficients have $\sigma_{ij}=0$ if $i\ne 0$. Then, for some 
$C(T,C_\ba, \bC_\star, \Cmu)>0$, any  
$\|\ba\|_\infty \le C_\ba$, 
every $F$ as in~\eqref{eq:possible-observables} with $\cY,\cY'\in {\mathfrak F}$, all $\lambda>0$ 
and $N \ge N_0(T,C_\ba,\bC_\star,\Cmu)$,
\begin{align}\label{eq:wts-trajectory-tail-bound}
\bP  \Big( \sup_{\bt\in [0,T]^2} |F(\bt) - \bE [F(\bt)]| 
\ge  \lambda \Big) \leq p_N(\lambda) :=
 {\begin{cases} N^C e^{ - \lambda \sqrt N/C} \,, & \lambda\leq C \\ 
 e^{- (\log \lambda) \sqrt{ N}/C}\,, & \lambda>C \end{cases}}\,.
\end{align}
\end{maintheorem}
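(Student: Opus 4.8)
\textbf{Proof plan for Theorem~\ref{thm:conc}.}
The strategy is to realize $F(\bt)$ as a Lipschitz function of the underlying random inputs $(\bX_0,\bA,\bB)$ on a high-probability event, and then invoke Hypotheses~\ref{hyp:1}--\ref{hyp:2} together with a chaining/union-bound argument over the time parameters $\bt\in[0,T]^2$. First I would fix a realization of the randomness and write $\bX_t$ explicitly: since $\bSigma$ is now constant (only $\sigma_{0j}\ne 0$), the \abbr{SDS}~\eqref{eq:sde-def} is a linear \abbr{SDE} with additive noise, so $\bX_t = e^{(\bJ+\bLambda)^T t}\bX_0 + \int_0^t e^{(\bJ+\bLambda)^T(t-s)}(\bh\,ds + d\bM_s)$, where $\bM_s$ is a fixed scaled Brownian motion independent of $\bJ$. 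This representation makes the map $(\bX_0,\bA,\bB)\mapsto \bX_t$ (and hence $\cY_i(t),\cY_i'(t')$, and $F(\bt)$) differentiable, with derivatives controlled by polynomials in $\|\bJ\|_2, \|\bLambda\|_2$, the $\|\bsigma\|$-quantities, $t$, and $\|\bX_0\|$, $\sup_{s\le T}\|\bM_s\|$.

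The key step is to quantify the local Lipschitz constant. On the event $\cA_K := \{\|\bA\|_2 \le K\} \cap \{\|\bX_0\|_2 \le K\sqrt N\}\cap\{\sup_{s\le T}\|\bM_s\|_2 \le K\sqrt N\}$, the operator norm $\|\bJ\|_2 = N^{-1/2}\|\bA\|_2$ is bounded by $KN^{-1/2}$, which is $O(1)$, so the matrix exponentials $e^{(\bJ+\bLambda)^T t}$ have norm bounded by $e^{C T}$ uniformly on $\cA_K$ and uniformly in $t\le T$; by standard random matrix estimates (or directly from Hypothesis~\ref{hyp:1} via a net argument), $\bP(\cA_K^c)$ decays like $e^{-cK\sqrt N}$ for the relevant ranges of $K$ (the $\|\bA\|_2$ bound using sub-exponential entries, the $\|\bX_0\|_2$ and $\|\bM_s\|_2$ bounds using the product structure and exponential concentration). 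On $\cA_K$ one then checks that $F(\bt)$, as a function of $(\bX_0,\bA,\bB)$ equipped with the Euclidean metric, is Lipschitz with constant $L_K = \mathrm{poly}(K)\cdot N^{-1/2}$: the $N^{-1}\sum_i$ averaging contributes $N^{-1}$, each factor $\cY_i$ contributes a per-coordinate sensitivity of size $O(1)$ after the $e^{CT}$ bounds, and summing the squared sensitivities over the $\approx N$ coordinates of each input gives the $\sqrt N$, for a net $N^{-1}\cdot\sqrt N = N^{-1/2}$ scaling (times powers of $K$ coming from products of norm bounds). One must also extend $F$ off $\cA_K$ to a globally Lipschitz function with the same constant (e.g.\ by truncating the inputs, i.e.\ composing with a Lipschitz retraction onto the ball defining $\cA_K$); the error introduced is supported on $\cA_K^c$ and is controlled by the same tail bound.

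Then, applying Hypothesis~\ref{hyp:2} (exponential concentration for Lipschitz functions, which tensorizes across $\mu\otimes\bP_\bA$, and for the Brownian part follows from Gaussian concentration) to the truncated $\tilde F$ at a fixed $\bt$ gives $\bP(|\tilde F(\bt) - \bE\tilde F(\bt)| > \lambda) \le C\exp(-\lambda/(C L_K)) = C\exp(-c\lambda\sqrt N/\mathrm{poly}(K))$. To pass from a fixed $\bt$ to the supremum over $\bt\in[0,T]^2$, I would use that $\bt\mapsto F(\bt)$ is itself (random) Lipschitz in $\bt$ with constant $\mathrm{poly}(K,\|\bX_0\|,\sup\|\bM_s\|)$ on $\cA_K$ — differentiating the mild solution in $t$ costs a factor of $\|\bJ+\bLambda\|_2$ — so a union bound over an $\eta$-net of $[0,T]^2$ of size $O((T/\eta)^2)$, combined with the per-point bound and the Lipschitz-in-$\bt$ control to fill the gaps, yields the claim. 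Optimizing the choice of $K$ as a function of $\lambda$ (taking $K$ a slowly growing function, e.g.\ $K\asymp \log\lambda$ or a small power, to balance $\bP(\cA_K^c)\sim e^{-cK\sqrt N}$ against $e^{-c\lambda\sqrt N/\mathrm{poly}(K)}$) produces exactly the two-regime bound $p_N(\lambda)$: for $\lambda \le C$ the polynomial prefactor $N^C$ absorbs the net cardinality and the $e^{-\lambda\sqrt N/C}$ term dominates, while for $\lambda > C$ the optimization forces the $e^{-(\log\lambda)\sqrt N/C}$ rate.

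\textbf{Main obstacle.} The delicate point is the interplay between the truncation level $K$ and the Lipschitz constant: the map $(\bX_0,\bA,\bB)\mapsto F(\bt)$ is \emph{not} globally Lipschitz (the matrix exponential $e^{\bJ^T t}$ blows up when $\|\bJ\|_2$ is large), so one genuinely needs the localization to $\cA_K$, and then one must track how $L_K$ grows with $K$ to see that it stays $\mathrm{poly}(K)\cdot N^{-1/2}$ rather than exponential in $K$. Getting the right $K$-dependence — in particular verifying that the bound on $e^{(\bJ+\bLambda)^T t}$ is only exponential in $Kt$ and hence $\mathrm{poly}(K)$ after the final optimization forces $K$ to grow at most logarithmically — is what determines whether the clean $\log\lambda$ rate in~\eqref{eq:wts-trajectory-tail-bound} comes out, and is where I expect the bulk of the technical care to go; the supremum-over-$\bt$ step and the extension-to-globally-Lipschitz step are comparatively routine once the localized Lipschitz estimate is in hand.
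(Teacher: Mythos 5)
Your overall plan is essentially the paper's: localize to a high-probability event controlling $\|\bX_0\|$, $\|\bJ\|_{2\to 2}$, $\sup_t\|\bM_t\|$ (the paper's $\cL_{N,R}$ in~\eqref{def:L-N-R}); establish that on that event $F(\bt)$ is $O(N^{-1/2})$-Lipschitz in the triplet $(\bX_0,\bJ,\bM)$ under the mixed norm~\eqref{def:BDG-norm}; combine the tensorized exponential concentration of Hypothesis~\ref{hyp:2} with a Lipschitz-extension/truncation argument off the localization set; union-bound over a time grid with a modulus-of-continuity estimate; and finally optimize the localization level in $\lambda$. These are exactly Lemma~\ref{lem:L-N-R-complement}, Propositions~\ref{prop:X-t-localization} and~\ref{prop:main-lipschitz}, Lemmas~\ref{lem:X-t-Lipschitz} and~\ref{lem:lifting-Lipschitz}, and Proposition~\ref{prop:main-concentration} together with the net argument in Section~\ref{sec:trajectory-universality}. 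The one genuine departure is that you propose to get the Lipschitz estimate from the explicit mild solution $e^{(\bJ+\bLambda)^T t}\bX_0 + \int_0^t e^{(\bJ+\bLambda)^T(t-s)}(\bh\,ds+d\bM_s)$ and Duhamel-type derivatives of the matrix exponential, whereas the paper uses a Gronwall argument directly on $\|\bX_t-\bX'_t\|$; both are available here since $\bSigma$ is constant and they give equivalent bounds.

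There is, however, a substantive error at precisely the step you flag as the main obstacle. You assert $L_K=\mathrm{poly}(K)\cdot N^{-1/2}$ and that one must verify it "stays $\mathrm{poly}(K)$ rather than exponential in $K$." That verification would fail. (Also note your event $\cA_K$ is mis-scaled: $\|\bA\|_2\le K$ is a vanishing-probability event; the intended condition is $\|\bA\|_2\le K\sqrt N$, i.e.\ $\|\bJ\|_{2\to 2}\le K$, matching the paper's $N\|\bJ\|_{2\to2}^2\le RN$.) With $\|\bJ\|_{2\to 2}\le K$ on the localization set, the matrix exponential satisfies $\|e^{(\bJ+\bLambda)^T t}\|\le e^{(K+C_\bLambda)t}$, so the per-coordinate sensitivity you call "$O(1)$ after the $e^{CT}$ bounds" is actually $e^{O(K)T}$, and the Lipschitz constant is genuinely of the form $Ce^{CK}/\sqrt N$ — this is exactly what Proposition~\ref{prop:main-lipschitz} gives, with $\sqrt R$ playing the role of $K$. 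The argument still closes because the optimization in Proposition~\ref{prop:main-concentration} chooses $\sqrt{R_\lambda}=\eta\log\lambda$ with $\eta$ small enough that $D(R_\lambda)=C_1 e^{C_1\sqrt{R_\lambda}}\le C_1\lambda/\log\lambda$, yielding $e^{-\lambda\sqrt N/(2D(R_\lambda)C)}\le e^{-(\log\lambda)\sqrt N/C}$ and matching the rate of $\bP(\cL_{N,R_\lambda}^c)\lesssim e^{-\eta(\log\lambda)\sqrt N/C_3}$. So the exponential dependence on $K$ is not an obstruction, but it must be carried explicitly: if you actually had $\mathrm{poly}(K)$ growth you would get a faster-than-stated rate, and more importantly, planning to prove $\mathrm{poly}(K)$ growth would lead you into a dead end, since the $e^{CK}$ factor is unavoidable and the balancing that yields the stated $e^{-(\log\lambda)\sqrt N/C}$ depends on tracking that exponent correctly.
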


(One might observe that the $\exp( - \Omega(\sqrt N))$ concentration in~\eqref{eq:wts-trajectory-tail-bound} differs from the more traditional $\exp( - \Omega(N))$ concentration in e.g.~\cite{BADG01,BADG06}; such differences, which recur throughout the paper, are because our Hypothesis~\ref{hyp:2} allows for merely sub-exponential, as opposed to Gaussian, tails.)
Combining Theorems~\ref{thm:main-expectations} and~\ref{thm:conc} we get  
the following strong universality for such quadratic observables.
\begin{maincor}\label{cor:main}
Suppose $\mu, \bP_\bA, \bP_{\tbA}$ satisfy Hypotheses~\ref{hyp:1}--\ref{hyp:2}, where $\bA, \tbA$, symmetric or independent, are mean-zero and have matching variance profile $\bm= (m_{ij})_{i,j}$. Let $F(\cdot)$ and $\tilde F(\cdot)$ be as in \eqref{eq:possible-observables}, for 
$\ba\in \bR^{N}$ such that $\|\ba\|_\infty \le C_\ba$, with respect to the corresponding solutions 
$\bX_t$, $\tbX_t$ for~\eqref{eq:sde-def} with constant $\bSigma$, i.e., $\sigma_{ij}=0$ if $i \ne 0$.
Then, for every $T<\infty$ we have that as $N \to \infty$, 
\begin{align*}
  Z_N :=  \sup_{\bt\in [0,T]^2} \big| F(\bt) - \tilde{F}(\bt)\big| \to 0\qquad \mbox{almost surely, and in $L^q$ for $q\ge 1$}
    \,.
\end{align*}
\end{maincor}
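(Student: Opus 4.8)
The plan is to read Corollary~\ref{cor:main} off Theorems~\ref{thm:main-expectations} and~\ref{thm:conc} by a three-term split, so that the only genuine work is bookkeeping. First I would place $F$ and $\tilde F$ on a common probability space: extend $\bP$ to $\mu\otimes\bP_\bJ\otimes\bP_{\tbJ}\otimes\bP_\bB$, so that the solution $\bX_t$ of~\eqref{eq:sde-def} driven by $(\bX_0,\bJ,\bB)$ and the solution $\tbX_t$ driven by $(\bX_0,\tbJ,\bB)$ are built simultaneously, sharing the initialization $\bX_0\sim\mu$ and the Brownian motion $\bB$, with $\bJ$ and $\tbJ$ independent. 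Under this measure the marginal law of $F(\bt)=F_{\cY,\cY',\ba}(\bX_t,\bX_{t'})$ is exactly the one it has under $\bP$, and that of $\tilde F(\bt)=F_{\cY,\cY',\ba}(\tbX_t,\tbX_{t'})$ (with $\bG$ built from $\tbJ$ whenever $\cY$ or $\cY'$ equals $\bG$) is the one it has under $\tbP$; since $t\mapsto\bX_t,\tbX_t$ are a.s.\ continuous, $Z_N$ is a genuine random variable. I would then write
\begin{equation*}
F(\bt) - \tilde F(\bt) = \big(F(\bt) - \bE[F(\bt)]\big) + \big(\bE[F(\bt)] - \tbE[\tilde F(\bt)]\big) + \big(\tbE[\tilde F(\bt)] - \tilde F(\bt)\big)\,,
\end{equation*}
and, setting $A_N := \sup_{\bt\in[0,T]^2}|F(\bt)-\bE[F(\bt)]|$, $D_N := \sup_{\bt\in[0,T]^2}|\tilde F(\bt)-\tbE[\tilde F(\bt)]|$ and $B_N := \sup_{\bt\in[0,T]^2}|\bE[F(\bt)]-\tbE[\tilde F(\bt)]|$, obtain $Z_N\le A_N+B_N+D_N$ from the triangle inequality. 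Since $m=1$, $p=2$, $\cY,\cY'\in\mathfrak F$ and $\bSigma$ is constant here, Theorem~\ref{thm:main-expectations} gives $B_N\le CN^{-1/2}\to0$ deterministically, so it remains only to show that $A_N\to0$ and $D_N\to0$ both almost surely and in $L^q$.

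For the convergence of $A_N$ (the argument for $D_N$ is identical) I would use only the tail bound $\bP(A_N\ge\lambda)\le p_N(\lambda)$ of Theorem~\ref{thm:conc}, valid for $N\ge N_0$. Almost surely: for each fixed $\eps>0$, applying the bound at $\lambda=\min\{\eps,C\}$ and using monotonicity of $\lambda\mapsto\bP(A_N\ge\lambda)$ gives $\bP(A_N\ge\eps)\le N^C e^{-\min\{\eps,C\}\sqrt N/C}$, which is summable in $N$ because the stretched exponential dominates the polynomial; so $\bP(\limsup_N\{A_N\ge\eps\})=0$ and, intersecting over $\eps\in\{1/k:k\ge1\}$, $A_N\to0$ a.s., whence with $B_N\to0$ we get $Z_N\to0$ a.s. In $L^q$: fix $q\ge1$, set $r:=\max\{q,2C+1\}>2C$, and write $\bE[A_N^r]=\int_0^\infty r\lambda^{r-1}\bP(A_N\ge\lambda)\,d\lambda$; the part $\lambda\in(0,C]$ contributes at most $rN^C\int_0^\infty\lambda^{r-1}e^{-\lambda\sqrt N/C}\,d\lambda=r\,\Gamma(r)\,C^r N^{C-r/2}\to0$ (as $r>2C$), while the part $\lambda>C$ is controlled by $\bP(A_N\ge C)\le N^Ce^{-\sqrt N}$ on a bounded range above $C$ and by $\lambda^{-\sqrt N/C}$ for large $\lambda$, and vanishes once $\sqrt N/C>r$. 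Thus $\bE[A_N^r]\to0$, so Lyapunov's inequality on the probability space gives $\|A_N\|_q\le\|A_N\|_r\to0$, and likewise $\|D_N\|_q\to0$; with $B_N\to0$, Minkowski yields $\|Z_N\|_q\le\|A_N\|_q+\|D_N\|_q+B_N\to0$.

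I expect no real obstacle: the content of the corollary lies wholly in Theorems~\ref{thm:main-expectations} and~\ref{thm:conc}, and it is their formal consequence. The two points that do need a little care are (i) setting up the joint coupling of the two dynamics so that $Z_N$ is a well-defined random variable whose three pieces carry the correct one-system laws, and (ii) noticing that although Hypothesis~\ref{hyp:2} yields only $\exp(-\Omega(\sqrt N))$ concentration — rather than the $\exp(-\Omega(N))$ of the Gaussian setting — a stretched exponential still beats every polynomial, which is precisely what makes the Borel--Cantelli series converge and the moment integral above vanish.
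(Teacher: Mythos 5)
Your proof is correct and follows essentially the same route as the paper: the three-term triangle-inequality split, Theorem~\ref{thm:main-expectations} for the deterministic middle term, Theorem~\ref{thm:conc} plus Borel--Cantelli for the a.s.\ part, and a moment integral of the tail bound for $L^q$. The only cosmetic differences are that you spell out the coupling of $(\bX_t,\tbX_t)$ explicitly (which the paper leaves implicit) and compute $\bE[A_N^r]\to 0$ directly followed by Lyapunov, whereas the paper invokes uniform integrability of $\{Z_N^q\}$ together with the already-established a.s.\ convergence; both are valid implementations of the same idea.
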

\begin{proof} The observables of~\eqref{eq:possible-observables} correspond to the $m = 1$ and $p=2$ 
case of~\eqref{eq:F-general-form}, so Theorem~\ref{thm:main-expectations} applies here with 
some constant $C_1=C(T,m,p,C_\ba, \bC_\star, C_\mu)$. For $N \ge (\lambda/C_1)^2$ we then get upon
combining the triangle inequality with Theorems \ref{thm:main-expectations}--\ref{thm:conc}, that 
\[
\bP (  Z_N
 > 3 \lambda) \le 2 p_N(\lambda)
\,.
\]
Since $\sum_N p_N(\lambda) < \infty$ for any fixed $\lambda>0$, by Borel-Cantelli
$Z_N \stackrel{a.s.}{\to} 0$ as $N \to \infty$. Similarly, upon using the triangle 
inequality for $\|\cdot\|_q$ we get
from Theorems \ref{thm:main-expectations}-\ref{thm:conc} that 
\[
\big(\bE[|Z_N|^q]\big)^{1/q} 
\le C_1 N^{-1/2} +  2 \Big( C^q + \int_C^\infty q \lambda^{q-1} p_N(\lambda) d \lambda\Big)^{1/q} \,.
\]
Further, $N \mapsto p_N(\cdot)$ decrease pointwise on $[C,\infty)$, while for any  $q\ge 1$, the preceding 
integral is finite for all $N$ large enough. 
With $\{Z_N^q\}_N$ uniformly integrable, it follows that $Z_N \to 0$ also in $L^q$. 
\end{proof}

\subsection{Proof strategy}\label{sec:proof-strategy}
As mentioned in the introduction, traditional approaches to proving universality run into substantial difficulty when we apply them to diffusions with random coefficients. The dependence on specific entries of the random matrix are quite bad, as the dependence applies in the drift both through the $J_{ij}$, and through its effect on $\bX_t$, whose history evidently also depends on $J_{ij}$: this effect can exponentially amplify small differences; in fact, the exponential amplification is inherent to the problem at hand.  

At a high level, our strategy for proving Theorem~\ref{thm:main-expectations}, and the main novelty of the paper, is to leverage the independence of $\mu$
from $\bP_{\bJ},\bP_{\tbJ}$ by \emph{pulling back} $f(\bX_t)$ and $f(\tbX_t)$  
to properties of (time) derivatives of $f(\bX_t)$ evaluated at $t=0$. At the level of expectations, these derivatives can be seen as iterates of the infinitesimal generator applied to the function $F$, which can then be controlled by combinatorial
moment methods. The dominant contribution to the drift of $F$  comes from drift terms that are polynomials of degree at most two in $(J_{ij})_{ij}$. Since the first two moments of $\bP_{\bA}$ and $\bP_{\tbA}$ match, these terms do not contribute to the difference in expectations above. We emphasize that the approach does not need rely on an explicit solution to the \abbr{SDE} of~\eqref{eq:sde-def}, nor does it use  exponential control, or large deviations theory as in~\cite{DLZ19}, or refined estimates on the spectrum of $\bA$ as in the setting of~\cite{BADG01} where, crucially, the process has a rotational symmetry.   

Recall that the \abbr{SDE} defined in Eq.~\eqref{eq:sde-def} has infinitesimal generator $L$ that we split as follows (see e.g.,~\cite[Theorem 7.3.3]{Oksendal}): 
\begin{align}\label{eq:L-splitting}
L: = & \underbrace{\sum_{1\leq i,j\leq N}J_{ij}x_{i}\partial_{j}}_{L_{\bJ}}+\underbrace{\sum_{1\leq i,j\leq N}\Lambda_{ij}x_{i}\partial_{j}}_{L_{\bLambda}}+\underbrace{\sum_{1\leq j\leq N}h_{j}\partial_{j}}_{L_{\bh}}+\underbrace{
\sum_{1\leq j\leq N} \big(\sum_{0\le i \le N} \sigma_{ij}  x_i \big)^2
\partial_{j}\partial_{j}}_{L_{\Delta}}\,.
\end{align}
By Ito's formula, we have for every $f$, say, in $C^\infty(\bR^N)$,
\begin{align*}
\big|\bE[f(\bX_{t})]-\tbE[f(\tbX_{t})]\big|= & \big|\bE [P_t f(\bX_0)]-\tbE[P_t f(\bX_0)]\big|\,,
\end{align*}
where $P_t = P_t (\bJ)$ denotes the \emph{semi-group operator} 
\begin{align}\label{eq:semigroup}
    P_t f(\bx) := \bE_{\bB}[f(\bX_t)\mid \bX_0 = \bx]\qquad \mbox{with formal expansion}\qquad P_t = e^{tL}
\end{align}
in terms of the generator $L$. 
In order
to reduce the problem to a combinatorial question, we wish to Taylor expand the semi-group operator $P_t f = e^{tL}f$. As long as $f$ is smooth and the Taylor expansion converges absolutely---shown in Section~\ref{subsec:switch-expectation-sum}---this formal expansion is valid and we can  switch expectations over $\mu, \bP_\bJ,\bP_\tbJ$ with the sum, and compute expectations of powers of the generator $L$ acting on $f$. Namely, the difference in expectations is bounded by controlling (1) the size in $N$, and (2) the growth in $k$ of 
\begin{align}\label{eq:intro-diff-in-expectations}
    |\bE[(L^k f)(\bX_0)] - \tbE[(L^k f)(\bX_0)]|\,.
\end{align}
Expanding these terms as words in $L_\bJ,L_\bLambda,L_\bh,L_\Delta$, we 
observe that a non-zero difference between the two 
expectations in~\eqref{eq:intro-diff-in-expectations}, can only 
come from the summands (monomials in $\bJ,\bX,\bLambda, \bh,\bsigma$) satisfying 
\begin{itemize}
    \item Every $J_{ij}$ that is present, must appear at least twice. 
    \item At least one $J_{ij}$ must appear at least three times. 
\end{itemize}
This is because the means of $\bP_\bA,\bP_\tbA$ are zero, and the variances of $\bP_\bA$ and $\bP_\tbA$ match. A careful analysis of this combinatorial problem for the monomials eventually yields that the contributions from these monomials are, together, $O(N^{-1/2})$ in $N$, and $o(k!)$ in $k$: this computation is carried out in Section~\ref{subsec:difference-in-expectations}.

\begin{rem}
One may notice that in the case where $\bSigma(\bX_t)$ is constant so that $\bM_t$ is just a Brownian motion, we are left with a linear \abbr{SDS} and one could use this linearity in a more central way, to explicitly solve expectations of monomials in $(X_i(t))_i$ as Gaussian integrals and time integrals over words in $e^{ s \bJ}$ and $(X_i(t))_i$. 
If the system $\bX_t$ is invariant under rotations, then we can work in the coordinates of $\bJ$ so that it is diagonal and apply universality results for the spectrum of $\bJ$. Absent rotational symmetry, however, the natural step would be to Taylor expand $e^{s\bJ}$, at which point the expansion and the resulting combinatorics will be similar, and perhaps less transparent, than our generator based approach. Of course, for non-constant $\bSigma(\bX_t)$ as in Theorem~\ref{thm:main-expectations}, the \abbr{SDS} is non-linear, and such an approach would not generalize. 
\end{rem}

In Section~\ref{sec:more-general-observables}, we extend this bound on the difference in expectations of statistics $f$ to multi-time observables, then to statistics that contain the driving martingale terms 
and finally establish the universality at the level of expectation for observables of the form of~\eqref{eq:F-general-form}, as
stated in  Theorem~\ref{thm:main-expectations}. In Section~\ref{sec:concentration}, we 
adapt the approach of \cite{BADG06} to establish Theorem \ref{thm:conc}, namely, to 
show that the restricted class of observables of~\eqref{eq:possible-observables} concentrate around their expectations, by localizing to a set of large probability where $F$ is $O(N^{-1/2})$-Lipschitz in the triplet $(\bX_0,\bJ, (\bM_t)_{t\in [0,T]})$ and using Hypothesis~\ref{hyp:2}.

\subsection{Applications}\label{sec:applications}
In this section, we discuss systems for which Theorem~\ref{thm:main-expectations}--Corollary~\ref{cor:main} imply concrete universality results. All the examples that follow will be in the context of $\bSigma$ that is constant, i.e., $\sigma_{ij}= 0$ if $i\ne 0$, where both Theorems~\ref{thm:main-expectations}--\ref{thm:conc} apply. Among the examples with non-constant $\bSigma$, one which may be of interest is
a system of geometric Brownian motions interacting linearly through $\bJ$. 

We next describe two well-studied families of Markov processes/dynamical systems to which our results apply: Langevin dynamics and gradient flows on various energy landscapes (Hamiltonians) or loss functions. 

\subsubsection*{Langevin dynamics}
In the case where $\bJ$ and $\bLambda$ are symmetric matrices, and $\sigma_{0j}$
are identically one,~\eqref{eq:sde-def} corresponds exactly to the \emph{Langevin dynamics} for the \emph{Hamiltonian
}
\begin{align}\label{eq:Hamiltonian}
H(\bx)= & - \beta \sum_{1\leq i,j\leq N}(J_{ij}+\Lambda_{ij})x_{i}x_{j}- \beta \sum_{1\leq i\leq N}h_{i}x_{i}\,.
\end{align}
The linearity of the diffusion here corresponds to having a quadratic Hamiltonian. The Langevin dynamics is a reversible Markov process designed such that, when non-degenerate, its invariant measure on $\bR^N$ is given by $d\pi (\bx) \propto e^{ - H(\bx)} d\bx$. 
For Hamiltonians coming from spin glass theory, the Langevin dynamics has been analyzed at length in the case of Gaussian disorder, and found to have a varied and rich behavior; in \S\ref{subsec:ssk}, we explore this further in the context of a simple spin glass model, called the spherical \abbr{SK} model. 

\subsubsection*{Gradient flows}
The case where $\sigma_{0j}$ are identically zero---i.e., besides the randomness of $\bJ$ and, possibly, the initial data, the dynamics is deterministic---
also fits into
the framework of the paper. Here, given $\bJ$ and $\bX_0$, the law of the dynamics is taken to be
the delta function on the trajectory of the solution to the resulting  system of \abbr{ODE}'s. This corresponds to the \emph{gradient flow} on $H(\bx)$: in optimization and learning settings, e.g., the examples of Sections~\ref{subsec:example-Hopfield}--\ref{subsec:matrix-pca}, gradient descent and its many variants, are favored methods.

\medskip
We now turn to a few well-studied concrete problems to which our results are applicable.

\subsubsection{The (soft) spherical \abbr{SK} model}\label{subsec:ssk}
The dynamics of spin glasses are a canonical setting in which Markov processes with random coefficients are studied in their thermodynamic ($N\to\infty$) limit. The short-time ($N\to\infty$, then $T\to\infty$) behavior of Langevin dynamics, especially, in the context of spin glasses have been extensively studied in both the physics and math literature~\cite{CriSom92,Crisanti1993,CugKur93,BADG01,BAG1,BAG2,BADG06,DemboSubag,BGJ18a,BGJ18b}. Perhaps the most well-known mean field spin glass is the Sherrington--Kirkpatrick (\abbr{SK}) spin glass, where $N$ spins taking values in $\{+1,-1\}$ interact pairwise with one another, and their interaction strengths are moderated by ``coupling" parameters $J_{ij} = J_{ji}$ which are drawn i.i.d., say, Gaussian. 
We discuss a simplification of this known as the spherical \abbr{SK} model, which has been found to nevertheless exhibit some of the same phenomena. 

Take an i.i.d.\ symmetric matrix $\bJ=(J_{ij})_{ij}$ with law 
$\bP_{\bJ}$. The spherical \abbr{SK} model has \emph{Hamiltonian} 
\begin{align}\label{eq:ssk-hamiltonian}
H(\bx)= & \sum_{1\leq i,j\leq N}J_{ij}x_{i}x_{j}\qquad\mbox{for}\qquad \bx\in\bS^{N-1}(\sqrt{N})\,.
\end{align}
To avoid differential geometry on the sphere, it is sometimes preferable
to extend the Hamiltonian to all $\bx\in\bR^{N}$ (note that
the Hamiltonian is homogeneous so that dividing $\bx$ by the Euclidean norm $\|\bx\|/\sqrt N$
gives the same process on $\bS^{N-1}(\sqrt N)$). Instead of adding a non-linear confining force as is done in, e.g.,~\cite{BADG01}, we either add a linear confining force $F_K(x) = K x$, or have no confinement ($K = 0$) (the linearity of the system ensures no finite time blowup). Consider now the Langevin dynamics at \emph{inverse temperature} $\beta>0$ for the Hamiltonian of~\eqref{eq:ssk-hamiltonian}, corresponding to
$\bX_t = \bX_{t}^{(\beta)}$ solving the \abbr{SDS} 
\begin{align}\label{eq:Langevin-sde}
\begin{cases}
d\bX_{t} &=  -\nabla H(\bX_{t}) dt- F_{K}'(\|\bX_{t}\|^{2}/N) \bX_t dt+ \beta^{ -1/2} d\bB_{t}\\
\bX_{0} & \sim \mu
\end{cases}\,.
\end{align}
We also consider the \emph{gradient flow} where we take $\beta = \infty$, so that the Brownian motion term drops out: $\bX_t$ is then the (deterministic) dynamical system following the (random) gradient vector field of $H(\bx) + F_K(\|\bx\|^2/N)$. The following universality for the above system is an immediate corollary of Theorem~\ref{cor:main}. 
 
\begin{cor}\label{cor:ssk-universality}
Fix $\beta \in (0,\infty]$ and consider the \abbr{SDS}'s $\bX_t$ and $\tbX_t$ given by~\eqref{eq:Langevin-sde} for $\bA$
and $\tbA$ having mean zero, matching variance profiles $m_{ij} = \one\{i\neq j\}$. Suppose $\mu$ is independent of $\bP_{\bA}, \bP_{\tbA}$ and these satisfy Hypotheses~\ref{hyp:1}--\ref{hyp:2}. Then for $F$ as in~\eqref{eq:possible-observables} with $\cY, \cY'\in {\mathfrak F}$ and $\|\ba\|_\infty \le C_\ba$, for every $T<\infty$, 
\begin{align*}
\sup_{s,t\in [0,T]}\big|F (\bX_{s},\bX_{t})-F(\tbX_{s}, \tbX_{t})\big|\to0 & \qquad
\mbox{almost surely, and in $L^q$ for $q\ge 1$}\,.
\end{align*}
\end{cor}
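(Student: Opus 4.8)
The plan is to recognize the \abbr{SDS}~\eqref{eq:Langevin-sde} as a special case of the general \abbr{SDS}~\eqref{eq:sde-def} with a \emph{constant} diffusion coefficient, and then to invoke Corollary~\ref{cor:main}. First I would unpack the drift of~\eqref{eq:Langevin-sde}: since $\bJ=N^{-1/2}\bA$ is symmetric, $-\nabla_j H(\bx)=-\sum_i(J_{ij}+J_{ji})x_i=-2\sum_i J_{ij}x_i$, which is again of the form $\sum_i\hat J_{ij}x_i$ with $\hat\bJ=N^{-1/2}(-2\bA)$; the factor $2$ (and any constant in the normalization of ``inverse temperature $\beta$'') is absorbed into a \emph{common} rescaling of $\bA$ and $\tbA$, which leaves their variance profiles equal, so up to this harmless relabeling the random drift is exactly $\sum_i J_{ij}x_i$ with $\bA,\tbA$ mean-zero, symmetric, of matching variance profile $m_{ij}=\one\{i\neq j\}$ (in particular $\bE[\|\bJ\|_2]=O(1)$). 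With $F_K(x)=Kx$ one has $F_K'\equiv K$, so the confining term is the deterministic linear interaction $-K\bX_t$, captured by $\bLambda=-K\,\mathrm{Id}$ (for which $\max_i\|(\Lambda_{ij})_j\|_1=K$, $\sup_{ij}|\Lambda_{ij}|=K$ and $\cN_{\bLambda}=1$, so $C_{\bLambda}=K$ is admissible), degenerating to $\bLambda\equiv 0$ if $K=0$; there is no external field, $\bh\equiv 0$; and the noise $\beta^{-1/2}d\bB_t$ corresponds to the constant diffusion coefficients $\sigma_{0j}=(2\beta)^{-1/2}$ and $\sigma_{ij}=0$ for $i\ge 1$, i.e.\ precisely the regime ``$\sigma_{ij}=0$ if $i\neq 0$'' required in Theorem~\ref{thm:conc} and Corollary~\ref{cor:main}. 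For $\beta=\infty$ this degenerates further to $\bsigma\equiv 0$, the deterministic gradient flow, which is still covered.

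Next I would verify well-posedness and the standing hypotheses. Since the total drift is linear in $\bX_t$, the \abbr{SDS}~\eqref{eq:Langevin-sde} has a global strong solution for a.e.\ $\bJ$ (as recorded after~\eqref{eq:martingale-def}; for $\beta=\infty$ this is just linear \abbr{ODE} theory), so there is no finite-time blowup and $t\mapsto F(\bX_s,\bX_t)$ is well-defined on all of $[0,T]^2$. The hypotheses of Corollary~\ref{cor:main} then hold as stated: $\bA,\tbA$ are mean-zero, symmetric, with common variance profile $m_{ij}=\one\{i\neq j\}$; $\mu$ is a product measure independent of $\bP_\bA,\bP_{\tbA}$ (part of Hypothesis~\ref{hyp:1}); and $\mu,\bP_\bA,\bP_{\tbA}$ satisfy Hypotheses~\ref{hyp:1}--\ref{hyp:2} by assumption.

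Applying Corollary~\ref{cor:main} with these parameters then gives, for any $T<\infty$, any $\ba\in\bR^N$ with $\|\ba\|_\infty\le C_\ba$, and any $F$ as in~\eqref{eq:possible-observables} with $\cY,\cY'\in\mathfrak{F}$ built from~\eqref{eq:building-blocks} along the solutions $\bX_t$, $\tbX_t$ of~\eqref{eq:Langevin-sde}, that
\[
\sup_{(s,t)\in[0,T]^2}\big|F(\bX_s,\bX_t)-F(\tbX_s,\tbX_t)\big|\;\longrightarrow\;0\qquad\text{almost surely and in }L^q,\ q\ge 1,
\]
which is exactly the asserted statement with $\bt=(s,t)$.

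The only step requiring care is the bookkeeping in the first paragraph, rather than any substantive argument: one must track the factor $2$ from differentiating the quadratic form, the $\sqrt2$ versus $\beta^{-1/2}$ normalizations, and confirm that the common rescaling used to absorb them preserves $\bm=\tbm$ (it does, since multiplying both $\bA$ and $\tbA$ by the same scalar keeps the variance profiles equal). The one genuinely model-specific point is that the confinement must be taken \emph{linear}; a non-linear confining potential, as in~\cite{BADG01}, would take~\eqref{eq:Langevin-sde} outside the linear-drift class~\eqref{eq:sde-def} treated here. All the real work is contained in Theorems~\ref{thm:main-expectations}--\ref{thm:conc}.
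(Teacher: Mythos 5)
Your proof is correct and takes exactly the approach the paper intends: the paper declares this result ``an immediate corollary of Corollary~\ref{cor:main}'' without spelling out the reduction, and your bookkeeping — identifying $\hat\bJ=N^{-1/2}(-2\bA)$, $\bLambda=-K\,\mathrm{Id}$, $\bh\equiv 0$, $\sigma_{0j}=(2\beta)^{-1/2}$ (and $\bsigma\equiv 0$ at $\beta=\infty$), checking the $C_\bLambda$ and Hypothesis~\ref{hyp:1}--\ref{hyp:2} constraints, and noting that the common scalar rescaling of $\bA,\tbA$ preserves matching variance profiles — is precisely the verification the paper leaves implicit. Nothing is missing.
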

As shown in~\cite{CugliandoloDean} and rigorously proved in~\cite{BADG01},  when $\bJ$ is Gaussian, the spherical \abbr{SK} model, or the soft spherical \abbr{SK} Model with confining potential $F$ satisfying $F(x)/x\to \infty$ as $x\to\infty$, exhibits a sharp \emph{aging} transition. Informally, aging is defined as the notion that the older a system gets, the more it remembers its past; formally, it corresponds to a transition in the behavior of the auto-correlation,
\begin{align*}
    C_N(s,t) := \frac 1N \sum_{i\le N} X_i(s)X_i(t)\,,
\end{align*}
between a (\abbr{FDT}) regime where $C_N(s,t) \sim \Phi (t-s)$ and an aging regime where $C_N(s,t) \sim \Phi(\frac ts)$ for large $s,t$. In~\cite{BADG01}, it was established that for $\bJ$ having rotationally invariant law, e.g., a 
\abbr{GOE} matrix, $C_N(s,t)$ solves a non-linear equation~\cite[Eq.~(2.16)]{BADG01}, which exhibits exactly this type of transition at some $\beta_{\rm ag}$. Our results allow us to read off universality for this limiting behavior, as formalized in the following corollary.   

\begin{cor}\label{cor:ssk-aging}
	Consider the Langevin dynamics for the soft spherical \abbr{SK} model, as defined in~\eqref{eq:Langevin-sde} where $\bP_\bA$ is a Wigner matrix satisfying Hypothesis~\ref{hyp:2}, the confinement is $F_K(x) = Kx$ for some $K> \bE[ \|\bJ\|_{2\to 2}]$, and the initialization $\mu$ is e.g., standard Gaussian, independent of $\bP_\bA$. Then, for every $\beta\in (0,\infty]$ and every $T<\infty$, the limit $(\lim_{N\to\infty} C_N(s,t))_{s,t\in [0,T]}$ exists, and satisfies~\cite[Eq.~(2.16)]{BADG01}.
	
	In the specific case of $\beta= \infty$, the conclusions of~\cite[\S3.2.2]{BADG01} apply, and the solution exhibits aging: i.e., there is a $\gamma>0$ (specified therein) such that for every $\lambda>1$, $$\lim_{s\to\infty} \lim_{N\to\infty}\frac{C_N(s,\lambda s)}{\sqrt{C_N(s,s) C_N(\lambda s,\lambda s)}} \approx (\lambda - 1)^{-\gamma}\,.$$ 
\end{cor}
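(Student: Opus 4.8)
The plan is to verify that the hypotheses of Corollary~\ref{cor:main} (equivalently Theorems~\ref{thm:main-expectations}--\ref{thm:conc}) apply to the system~\eqref{eq:Langevin-sde}, and then invoke the known Gaussian-case analysis of~\cite{BADG01} together with the universality just established. First I would observe that~\eqref{eq:Langevin-sde} is a special case of~\eqref{eq:sde-def}: the gradient term $-\nabla H(\bX_t) = -2\bJ \bX_t$ (using symmetry of $\bJ$) supplies the random linear drift $\bJ^T\bX_t$ up to the harmless factor $2$ absorbed into the variance normalization, the confining term $-F_K'(\|\bX_t\|^2/N)\bX_t = -K\bX_t$ is a deterministic linear interaction fitting the $\bLambda$ slot with $C_\bLambda = K$, there is no external drift ($\bh = 0$), and $\beta^{-1/2}d\bB_t$ is a constant diffusion coefficient, i.e.\ $\sigma_{ij} = 0$ for $i\neq 0$ and $\sigma_{0j} = \beta^{-1/2}$ (with the $\beta=\infty$ case giving $\bSigma\equiv 0$). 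The variance profile $m_{ij} = \one\{i\neq j\}$ is a Wigner profile, and the hypothesis that $\bP_\bA$ is a Wigner matrix satisfying Hypothesis~\ref{hyp:2} with standard-Gaussian-independent $\mu$ (also satisfying Hypotheses~\ref{hyp:1}--\ref{hyp:2}) is exactly what is needed. Taking $\tbA$ to be the \abbr{GOE} matrix with the same variance profile, Corollary~\ref{cor:main} gives, for $F = C_N$ the autocorrelation observable (which has the form~\eqref{eq:possible-observables} with $a_i \equiv 1$, $\cY = \cY' = \bX$), that $\sup_{s,t\in[0,T]^2}|C_N(s,t) - \tilde C_N(s,t)| \to 0$ almost surely and in $L^q$, where $\tilde C_N$ is the autocorrelation for the \abbr{GOE}-driven system.

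Next I would invoke~\cite{BADG01}: since the \abbr{GOE} ensemble is rotationally invariant and the choice $K > \bE[\|\bJ\|_{2\to 2}]$ (asymptotically above the top of the semicircle) meets their confinement/spectral requirements, their results show $\tilde C_N(s,t)$ converges as $N\to\infty$ to the deterministic solution of~\cite[Eq.~(2.16)]{BADG01}, uniformly on $[0,T]^2$. Combining this convergence with the universality bound above yields that $\lim_{N\to\infty} C_N(s,t)$ exists and equals the same limit, for the original Wigner-driven system; this gives the first assertion. For the second assertion, in the case $\beta = \infty$ the limiting equation is the gradient-flow specialization, and~\cite[\S3.2.2]{BADG01} already extracts from it the aging asymptotics with the stated exponent $\gamma$; since the limit object is identical, the conclusion transfers verbatim. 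Passing from the uniform-on-$[0,T]^2$ statement to the double limit $\lim_{s\to\infty}\lim_{N\to\infty}$ in the aging display requires only that for each fixed $s$ (hence each fixed $\lambda s \le T$ once $T$ is taken large) the inner $N\to\infty$ limit is the \cite{BADG01} object, after which the $s\to\infty$ analysis is purely a property of that deterministic object.

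The main obstacle is not in our universality input but in checking that the precise hypotheses of~\cite{BADG01} are met by the \emph{linearly} confined model considered here, whereas~\cite{BADG01} uses a strictly convex (superlinear) confining potential $F$ with $F(x)/x \to \infty$. One must confirm that taking $F_K(x) = Kx$ with $K$ strictly above the asymptotic operator norm still produces a well-posed limiting equation of the form~\cite[Eq.~(2.16)]{BADG01} with the relevant memory kernels — intuitively yes, because the spectral gap between $K$ and the edge of the semicircle provides the damping that prevents blow-up and makes the relevant Gaussian/spectral integrals converge — but this is where the argument needs care; one option is to note that the homogeneity of $H$ lets one rescale to the sphere $\bS^{N-1}(\sqrt N)$, on which~\cite{BADG01}'s spherical-constraint analysis applies directly, with the linear confinement playing the role of the Lagrange multiplier. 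A secondary, more routine point is checking that the Wigner hypothesis plus $\mu$ Gaussian indeed satisfies both Hypothesis~\ref{hyp:1} (sub-exponential tails, bounded moments of $\mu$) and Hypothesis~\ref{hyp:2} (exponential concentration for Lipschitz functions), which holds since Gaussian and sub-Gaussian product measures satisfy a dimension-free Poincar\'e inequality as noted in the remark following Hypothesis~\ref{hyp:2}.
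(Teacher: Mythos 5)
Your overall strategy matches the paper's: fit~\eqref{eq:Langevin-sde} into the framework of~\eqref{eq:sde-def}, verify Hypotheses~\ref{hyp:1}--\ref{hyp:2} for the Wigner/Gaussian data, compare to the \abbr{GOE} via Corollary~\ref{cor:main}/Corollary~\ref{cor:ssk-universality}, and then import the Gaussian-case limit from~\cite{BADG01}. The routine hypothesis-checks in your first paragraph are fine, and you correctly identify the one genuine obstruction: \cite{BADG01} assumes a superlinearly confining $F$, whereas here $F_K$ is linear.

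However, you flag that obstruction without actually closing it, and the two places where it matters require different fixes. For the first assertion (existence of $\lim_{N\to\infty}C_N(s,t)$ and~\cite[Eq.~(2.16)]{BADG01}), the paper's resolution is to note that the superlinearity hypothesis in~\cite[Theorem~2.6]{BADG01} is used only to localize the process, and that this localization is equally available once $K$ exceeds $\|\bJ\|_{2\to 2}$ (a.s.\ at most $2+\epsilon$ for a Wigner matrix). Your phrase ``intuitively yes, because the spectral gap\ldots provides the damping'' gestures at this but does not pin it to the specific role the confinement plays in~\cite{BADG01}'s proof. For the second assertion (aging), your claim that ``since the limit object is identical, the conclusion transfers verbatim'' is not correct as stated: \cite[\S3.2.2]{BADG01} analyzes Eq.~(2.16) only for a specific \emph{quadratic} choice of $F$, so the limit object obtained with $F_K$ linear is \emph{not} literally the one analyzed there. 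The paper's argument is instead that at $\beta=\infty$ the homogeneity of $H$ makes $d\bX_t$ a scalar multiple of $d(\bX_t/\|\bX_t\|)$, so the sphere-projected dynamics --- and hence the \emph{normalized} autocorrelation $C_N(s,\lambda s)/\sqrt{C_N(s,s)C_N(\lambda s,\lambda s)}$ --- are independent of the confining potential. One then applies Corollary~\ref{cor:ssk-universality} separately to $C_N(s,s)$ and $C_N(s,\lambda s)$ and concludes the normalized quantity agrees with the \cite{BADG01} answer. You mention this homogeneity-to-sphere reduction as ``one option'' at the very end, but it is in fact the load-bearing step for the aging claim, not an alternative to the first argument. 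In short: same plan, correct identification of where the work lies, but the two gaps (superlinear$\to$linear confinement for existence; quadratic$\to$linear confinement for aging) are acknowledged rather than filled, and the ``transfers verbatim'' shortcut for aging would not survive scrutiny without the homogeneity reduction made explicit.
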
 

\begin{proof} 
For the first statement, while \cite[Theorem~2.6]{BADG01} is stated for confinement $F$ growing super-linearly, following the proof one sees that it is only used to localize the process, for which it suffices for $K$ to exceed $\|\bJ\|_{2 \to 2}$ (which for Wigner matrices is a.s. less than $2+\epsilon$ for any $\epsilon>0$). The first part of the corollary therefore follows from Corollary~\ref{cor:ssk-universality} together with the result of~\cite[Theorem 2.6]{BADG01} showing that for $\bA$ standard normal, $C_N(s,t)$ satisfies~\cite[(2.16)]{BADG01}. 

For concreteness, the analysis of the limiting equation~\cite[(2.16)]{BADG01} and 
the derivation of the aging transition is carried out in~\cite{BADG01} only
for a specific choice of quadratic $F$. One could in principle perform the same analyses with other choices of $F$ including $F=F_K$ that is linear, corresponding to the case we consider, and understand the limiting behavior of $C_N(s,t)$ as $N\to\infty$ then $s,t\to\infty$ as $\beta$ varies. 
We do not pursue this, and instead notice that in the specific case of $\beta = \infty$, the homogeneity allows us to disregard the choice of the confining potential and obtain universality for the zero-temperature aging behavior.  To see this, since $H(x)$ is a homogeneous polynomial, if $\beta = \infty$, we see that $d\bX_t$ is a constant multiple (for a constant depending only on $\|\bX_t\|$) of  $d(\bX_t/ \|\bX_t\|)$. Therefore, at $\beta = \infty$, the projection of the dynamics~\eqref{eq:Langevin-sde} onto the sphere $\bS^{N-1}(\sqrt N)$ matches the projection of the Langevin \abbr{SDS} of~\cite{BADG01}, regardless of the choice of confining potential used therein. We apply Corollary~\ref{cor:ssk-universality} first to deduce that $\lim_{s\to\infty} \lim_{N\to\infty} C_N(s,s)=: C_\infty$ is the same for Gaussian and non-Gaussian $\bP_{\bA}$. Then applying it to $C_N(s,\lambda s)$, we find that the $N\to\infty$ limit of the normalized auto-correlation is the same for Gaussian and non-Gaussian $\bP_{\bA}$, and it is further independent of the choice of confining potential: as such for any $\bP_{\bA}$, it has the same $N\to\infty$ limit as in~\cite{BADG01}. 
\end{proof}

\begin{rem}\label{rem:p-spin}
It would be of interest to consider similar Langevin dynamics for the spherical or soft spherical $p$-spin glass models for $p>2$. Permitting higher order interactions gives rise to a wealth of more complicated models and different behavior. At the level of the off-equilibrium Langevin dynamics, these lead to the famous Cugliandolo--Kurchan/Crisanti--Horner--Sommers limit of coupled integro-differential equations for $C_N(s,t)$ and  an integrated response $\chi_N(s,t)  = \frac 1N \sum_i X_i(s) B_i(t)$~\cite{CriSom92,Crisanti1993,CugKur93,BADG06,DGM07,Gui07,DemboSubag}, as well as the evolution of other observables e.g., the Hamiltonian and its square gradient~\cite{BGJ18a}. Our combinatorial framework suggests that the differences in expectations (over $p$-tensors $\bJ$ and $\tbJ$) of averaged observables are microscopic, as long as there is a non-linear confining potential to prevent finite-time blowup. The complication is in the fact that the two non-linearities (from the interactions, and the confining potential) cancel out, but these cancellations are not easily seen in the Taylor series obtained by expanding in powers of the generator; thus we are not able to show that this series is absolutely summable and exchange the infinite sum with its expectation. 
\end{rem}

\subsubsection{Symmetric and asymmetric Hopfield networks}\label{subsec:example-Hopfield}
Let us also mention a different context in which diffusions of the form of~\eqref{eq:sds-intro} appear. Hopfield networks were introduced by~\cite{Hopfield} and have become one of the simplest and most fundamental examples of neural networks. In this model, a set of $N$ neurons $(X_i)_i$ are either active $\{+1\}$ or inactive $\{-1\}$ depending on whether the neuron $X_j$'s input $\sum J_{ij} X_i$, for some weights $\bJ = (J_{ij})_{i,j}$, exceeds a deterministic threshold $h_i$. This model was introduced in the symmetric setting, but has since been analyzed extensively both in symmetric and asymmetric setups~\cite{Hertz-et-al,CrisantiSompolinsky,Xuetal}. 

One typically initializes the neurons at some pre-determined state independent of $\bJ$, e.g., all inactive/active, or uniformly at random, and tracks their time-evolution, whereby each neuron activates/de-activates at some rate, depending on the relationship between its input and threshold. Though there are many ways this is implemented, one is to soften the problem to continuous state space, either to the sphere, or to full-space and add in stochasticity by running some Langevin dynamics. This is the approach pursued in~\cite{CrisantiSompolinsky} as well as e.g.,~\cite{Xuetal}. Then, with a linear confining force, our results imply universality for both for the symmetric and asymmetric Langevin dynamics (and gradient flow) of general Hopfield networks: this includes universality for observables capturing the energy/loss in the network, its square gradient, and its ``memory".

\subsubsection{Rayleigh quotient minimization for random matrices}\label{subsec:matrix-pca}
We conclude with a  related optimization problem in high dimensions: that of optimizing the Rayleigh quotient of a random matrix $\bJ$ with a certain mean and variance profile. Maximizing the Rayleigh quotient is an efficient way to find the top eigenvector and eigenvalue of the random matrix via local iteration, e.g., either gradient descent or Langevin dynamics at low temperatures (large $\beta$). To place this in the framework of~\eqref{eq:sde-def}, take $H(\bx) = \langle \bx, \bJ \bx\rangle$ and either no confining force or $F_K' = K$ for some $K > \|\bJ\|_{2 \to 2}$ in~\eqref{eq:Langevin-sde}. In the situation where the matrix ensemble is rotationally invariant, e.g., the \abbr{GOE},  the limiting trajectories of, say, $H(\bX_t)$ for the gradient flow/Langevin dynamics can be explicitly solved (by diagonalization). Corollary~\ref{cor:main} implies these limiting trajectories will be universal, and thus, match the limiting trajectories obtained when $\bJ$ is not Gaussian.  In~\cite{BLM15,ChenLamUniversality}, similar universality results were described for an \abbr{AMP} approach to finding the top eigenvalue/eigenvector of $\bJ$. 

\medskip
\noindent \textbf{Acknowledgments.}  
The authors thank the anonymous referee for useful comments, and Ramon van Handel and Ofer Zeitouni for helpful conversations. This project was supported in part by NSF grants \#DMS-1613091, \#DMS-1954337 (A.D.), and by the Miller institute for basic research in science (R.G.).

\section{Universality of expectations of monomial observables}\label{sec:universality-expectations}
In this section, we prove that two solutions $\bX$ and $\tbX$ of~\eqref{eq:sde-def} driven by $\bJ$ and $\tbJ$ are such that expectations of observables of the form~\eqref{eq:possible-observables} are universal, as long as $\bA$ and $\tbA$ have the same variance profiles. As discussed in Section~\ref{sec:proof-strategy}, we 
reduce differences in expectations to combinatorial calculations by expanding
the Markov transition semi-group of the process $\bX_t$ in terms of its generator, an approach
for proving universality in randomly driven dynamical systems which is the key contribution of this paper. 

For the entirety of this paper, we will take two distributions $\bP_{\bA}$ and $\bP_{\tbA}$ on $\bA$ and $\tbA$ that are mean zero and have the same, uniformly bounded, variance profiles $\bm = \tbm$. Recall that $\bP_{\bA}$ and $\bP_{\tbA}$ are either fully independent or symmetric ensembles. For conciseness, we present our results in the case of fully independent (in particular, not symmetric). The case where they are symmetric is handled mutatis mutandis and only induces a few constant factors in certain estimates
(see Remark~\ref{rem:symmetric-independent} for more on these minimal modifications).

\subsection{Main result on difference in expectations}
The observables in Theorem~\ref{thm:main-expectations} are composed of polynomials in $\bJ$ and $\bX$, as well as $\bM$. We first establish the universality of expectations for general monomials in $\bJ$ and $\bX$ via a combinatorial moment matching type of argument. In
Section~\ref{sec:more-general-observables} such universality is reduced for monomials that additionally 
involve the martingale, to that of monomials only  in $\bJ$ and $\bX$. 
 
More precisely, the statistics we consider throughout this section are of the following form. Fix any $s$ (not necessarily distinct) pairs $\balpha =( \alpha_1,\ldots,\alpha_s)$ where each $\alpha_k = i_kj_k$, and $r$-tuple (not necessarily distinct) $\boldsymbol \gamma = (\gamma_1,\ldots,\gamma_r)$ where each $\gamma_i \in \{1,\ldots,N\}$. Then consider observables $f_{\balpha,\bgamma}(\bx)$ of the form 
\begin{align}\label{eq:f-balpha-gamma-def}
f_{\balpha,\bgamma}(\bx)= \prod_{k=1}^{s} J_{\alpha_k} \prod_{l=1}^r x_{\gamma_l}\,.
\end{align}
For an $s$-tuple of pairs $\balpha$, let
\begin{itemize}
    \item $I_{\balpha}$ count the number of distinct pairs in $\balpha$, i.e., $I_\balpha = |\{\alpha_1,\ldots, \alpha_s\}|$,
    \item $I_{\balpha,1}$ count the number of $(\alpha_k)_k$ which appear exactly once in $\balpha$, and
    \item $I^+_{\balpha,1}$ equal $I_{\balpha,1}$ plus the indicator 
    that no pair appears more than twice in $\balpha$.  
\end{itemize} 
Our bound on the distance between the expectations of $f_{\balpha,\bgamma}(\bX_t)$ and $f_{\balpha,\bgamma}(\tbX_t)$ depends on $\balpha$, $\bgamma$ and the laws $\mu$, $\bP_{\bA}$, $\bP_{\tbA}$ 
only through ${\bC}_\star$, $\Cmu$, $s$, $r$ and $I^+_{\balpha,1}$. More precisely, we derive here the following.

\begin{prop}\label{prop:difference-in-expectations}
There exists $C= C(r,s,T, \bC_\star,\Cmu (r))$ such that for every $T,r,s \ge 0$, 
every $s$-tuple of pairs $\balpha$ and every $r$-tuple $\bgamma$, if
$\bP_{\bA}$, $\bP_{\tbA}$ and $\mu$ satisfy Hypothesis~\ref{hyp:1}, then  
\begin{align*}
\sup_{t \in [0,T]} \big|\bE [f_{\balpha, \bgamma} (\bX_t) ] 
-\tbE [f_{\balpha, \bgamma}(\tbX_t)]\big| \leq C N^{-(s+I^+_{\balpha,1})/2} \,.
\end{align*} 
Observe that in the case $s=0$, the right-hand side is $C N^{-1/2}$. 
\end{prop}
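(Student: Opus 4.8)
The plan is to reduce the difference in expectations to the bound on iterates of the generator sketched in Section~\ref{sec:proof-strategy}, and to carry out the combinatorial accounting that produces the exponent $(s + I^+_{\balpha,1})/2$. Concretely, since $f_{\balpha,\bgamma}$ is a polynomial (hence smooth), one first establishes that the Taylor expansion $P_t f_{\balpha,\bgamma} = \sum_{k\ge 0} \frac{t^k}{k!} L^k f_{\balpha,\bgamma}$ is valid and absolutely convergent in $L^1(\bP)$ and $L^1(\tbP)$, uniformly in $t\in[0,T]$; this is exactly the point deferred to Section~\ref{subsec:switch-expectation-sum}, and it lets us write
\begin{align*}
\bE[f_{\balpha,\bgamma}(\bX_t)] - \tbE[f_{\balpha,\bgamma}(\tbX_t)]
= \sum_{k\ge 0} \frac{t^k}{k!}\Big(\bE[(L^k f_{\balpha,\bgamma})(\bX_0)] - \tbE[(L^k f_{\balpha,\bgamma})(\bX_0)]\Big)\,,
\end{align*}
where crucially the two expectations on the right differ only through $\bP_\bA$ versus $\bP_{\tbA}$, since $\mu$ and $\bP_\bB$ are shared. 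So it suffices to bound each term $|\bE[(L^k f_{\balpha,\bgamma})(\bX_0)] - \tbE[(L^k f_{\balpha,\bgamma})(\bX_0)]|$ and sum the series.

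Next I would expand $L^k f_{\balpha,\bgamma}$ as a sum over words $w = w_1\cdots w_k$ with each $w_a \in \{L_\bJ, L_\bLambda, L_\bh, L_\Delta\}$, and then further expand each $L_\bJ, L_\bLambda, L_\Delta$ into its constituent monomial differential operators (indexed by the choice of $i,j$, or $j$, at each slot). Applying such a word to the monomial $f_{\balpha,\bgamma}$ and evaluating at $\bX_0$ produces, for each choice of indices, a scalar monomial: a product of $(J_{\cdot\cdot})$'s (the original $s$ from $\balpha$, plus one new $J$-factor per occurrence of $L_\bJ$), times entries of $\bLambda$, $\bh$, $\bsigma$, times a monomial in $(X_i(0))$, times a combinatorial coefficient from the derivatives hitting the $x$-powers. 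Taking $\bE$ or $\tbE$ factors as (law of $\bX_0$) $\times$ (law of the $A_{ij}$'s), and since $\mu$ is shared, the difference $\bE - \tbE$ of a given monomial vanishes unless the collection of $J$-factors is such that the joint moments of $\bA$ and $\tbA$ can differ. Because both have mean zero and matching variance profile, a nonzero contribution to the difference forces: every $J_{ij}$ present occurs at least twice, and at least one occurs at least three times — exactly the two bullet conditions in Section~\ref{sec:proof-strategy}. Using the sub-exponential moment bound~\eqref{eq:unif-mom-bd} and $\Cmu$ to bound the surviving monomials, and the normalization $\bJ = N^{-1/2}\bA$ together with the constraints $\max_i\|(\Lambda_{ij})_j\|_1 \le C_\bLambda$, $\sup_{i,j}|\Lambda_{ij}|\le C_\bLambda/\cN_\bLambda$, $\sup_j|\sigma_{0j}|\le C_\bsigma$, $\sup_{i,j}|\sigma_{ij}|\le C_\bsigma/\cN_\bsigma$, one then counts the number of free indices that can be summed over against the $N^{-1/2}$ per $J$-factor. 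The sparsity-normalization of $\bLambda$ and $\bsigma$ is what makes the words containing $L_\bLambda$ or the cross-terms of $L_\Delta$ cost nothing in powers of $N$, so the net power of $N$ is governed purely by the $J$-factors: $s$ original ones plus the new ones from $L_\bJ$-slots, with the pairing/tripling constraint translating the "extra" occurrences into the gain $I^+_{\balpha,1}$ — the $I_{\balpha,1}$ pairs in $\balpha$ that appear exactly once must be matched by a freshly-created $J$ (costing an extra $N^{-1/2}$ and not buying a free index), and the $+1$ reflects the requirement that some $J$ appears a third time, giving one further unmatched $N^{-1/2}$ when no pair appears more than twice. Careful bookkeeping of free versus constrained indices across all word shapes of length $k$ yields a bound of the form $C(r,s,T,\bC_\star,\Cmu) \cdot \rho^k \cdot (\text{poly in } k) \cdot N^{-(s+I^+_{\balpha,1})/2}$ for each term, with $\rho$ depending only on $\bC_\star$ and $T$; here one also uses that the number of words of length $k$ is $4^k$ and that the derivative-coefficients from differentiating a monomial of bounded degree (degree grows at most linearly in $k$) contribute at most $(Ck)^{O(k)}$, which is still $o(k! \cdot C^k)$-controllable after dividing by $k!$.

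Summing over $k$ then gives the claimed $O(N^{-(s+I^+_{\balpha,1})/2})$, uniformly in $t\in[0,T]$. The main obstacle, and the heart of the argument, is the combinatorial estimate in the previous paragraph: one must simultaneously track (i) which index choices create new $J$-factors and how those interact with the $s$ pre-existing ones under the "pair-up, with one triple" constraint, (ii) that cross-terms involving $\bLambda$ and the off-diagonal part of $\bsigma$ are neutral in $N$ thanks to their $\cN_\bLambda, \cN_\bsigma$-normalizations, and (iii) a clean bound on the growth in $k$ (the degree of the monomial after $k$ applications of $L$ grows, so naive moment bounds blow up factorially — one needs the $(\ell-1)!\,\CA^{\ell/2}$ form of~\eqref{eq:unif-mom-bd} and the $1/k!$ from the Taylor coefficient to win). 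The remaining steps — validity of the expansion, the mean-zero/matching-variance vanishing, and resumming — are comparatively routine once the per-term bound is in hand. I would also note that the stated bound degrades gracefully: when $s=0$ there are no pre-existing $J$'s, $I_{\balpha,1}=0$, but $I^+_{\balpha,1}=1$ by the vacuous "no pair appears more than twice" indicator, giving the advertised $CN^{-1/2}$, which is what feeds into Theorem~\ref{thm:main-expectations}.
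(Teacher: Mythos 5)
Your proposal follows the same route as the paper's own proof: Taylor-expand the semigroup in powers of the generator (justified by the absolute convergence of Proposition~\ref{prop:absolute-convergence}), expand $L^k$ into words in $\{L_\bJ,L_\bLambda,L_\bh,L_\Delta\}$, observe that a monomial's $\bE-\tbE$ difference vanishes unless every $J_{ij}$ present appears at least twice and some $J_{ij}$ appears at least three times, translate those two constraints into $k_\bJ+s\ge 3$ and $k_\bJ\ge 2I_\star + I^+_{\balpha,1}$ (this is precisely Lemma~\ref{lem:monomial-counting}), invoke the $\cN_\bLambda,\cN_\bsigma$-normalizations so that $L_\bLambda,L_\Delta$ are $N$-neutral, and resum in $k$. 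You correctly identify the source of the exponent and the $s=0$ specialization. The only place your sketch is slightly loose is the estimate on the $k$-growth of coefficients: the "derivative combinatorics" actually contribute only $r^k$ (geometric, since the $\bx$-degree stays fixed at $r$ under each letter), while the genuinely factorial growth comes from the $J$-moment bound $\prod_\ell n_\ell!\le k_\bJ!$ (multiplicities of repeated index pairs), which the paper absorbs via $k!\ge k_\bJ!\,(k-k_\bJ)!$ together with the surviving $N^{I_\star - k_\bJ/2}$ factor (see \eqref{eq:reza-added}--\eqref{eq:w-diff-bd}); a bare $(Ck)^{O(k)}/k!$ bookkeeping would not obviously converge, so the refined counting of Proposition~\ref{prop:monomial-counting-crude} is essential. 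With that caveat your outline is a faithful compression of the paper's argument.
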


\begin{rem}
The above theorem shows that having more distinct $J$'s in the observable, decreases the difference in expectations by more than $N^{-s/2}$ as would be expected from the typical size of $J_{ij}$. This should be expected due to \abbr{CLT}-type cancellations:  one way to motivate this scaling is by recalling averaged statistics which have $J$ in them, in the context of the spherical \abbr{SK} model, e.g., the most relevant being 
\[
\frac{H(\bx)}{N}=  \frac{1}{N^{3/2}}\sum_{1\leq i,j\leq N} A_{ij}x_{i}x_{j}\qquad \mbox{and}\qquad \frac{|\nabla H(\bx)|^2}{N} = \frac 1N\sum_{1\leq i\leq N} G_i^2(\bx)= \frac 1N \sum_{1\leq i \leq N} \Big(\sum_{1\le j\le N} \frac{1}{\sqrt N} A_{ij}x_j\Big)^2\,.
\]
(Notice that these statistics are \emph{not}
rescaled by the number of order-one sized monomials; but they remain on the $O(1)$ scale due to additional cancellations from $(J_{ij})$). This gain in the scaling has to be visible at the level of the difference in expectations under $\bP$ and $\tbP$ in order to hope for universality for such statistics.
\end{rem}

Recall from Section~\ref{sec:proof-strategy} that our high level strategy is to reduce the expectations of statistics of the solution $\bX_t$ of the \abbr{sds} 
 to combinatorial calculations in terms of mixed moments of $\bJ$ and $\bX_0$. This is possible by writing $\bE_{\bB}[f(\bX_t)]$ as $P_t f (\bX_0)$ and then Taylor expanding $P_t = e^{tL}$ where $L$ is the generator for the process $\bX_t$ as defined in~\eqref{eq:L-splitting}. In order for this expansion to be valid, and therefore our approach to be permissible, we need the Taylor expansion for $e^{tL}$ to converge absolutely, for each fixed $N$. 
In the next sub-section, we show that indeed with $\mu, \bP_\bA, \bP_\tbA$ satisfying Hypothesis~\ref{hyp:1}, for each fixed $N$, the infinite series corresponding to $P_t f$ converges absolutely, so we can follow this plan. 

Before proceeding further, we make the following notational remark. 

\medskip
\noindent \textbf{Notational comment on set and sequence differences.} For sets $\{b_1,\ldots,b_m\} \subset \{a_1,\ldots,a_n\}$, we let $\{a_1,\ldots,a_n\}\setminus \{b_1,\ldots,b_m\}$ denote the set difference as usual. Frequently we deal with tuples, or sequences in which the order does not matter. For two such tuples $(a_1,\ldots,a_n)$ and $(b_1,\ldots,b_m)$ (where of course there may be repetitions in each sequence), we denote by $(a_1,\ldots,a_n)\setminus (b_1,\ldots,b_m)$ the difference wherein for each $b_i$ appearing in $\{a_1,\ldots,a_n\}$ we only remove one of its appearances---say the first one---from $(a_1,\ldots,a_n)$. We also define $(a_1,\ldots,a_n) \amalg (b_1,\ldots,b_m)$ to be the concatenation given by  $(a_1,\ldots,a_n,b_1,\ldots,b_m)$.

\subsection{Switching the expectation and the infinite series}\label{subsec:switch-expectation-sum}
The goal of this sub-section is to prove the following absolute convergence result.

\begin{prop}\label{prop:absolute-convergence}
Suppose $\bP_\bA$ and $\mu$ satisfy Hypothesis \ref{hyp:1}. Then, 
there exists finite $N_o=N_o(r,T,\bC_\star)$ such that for every $N \ge N_o$, every $T<\infty$,
every $s$-tuple of pairs $\balpha$, and every $r$-tuple of indices $\bgamma$, we have  
\begin{align*}
\sum_{k\geq0}\frac{T^{k}}{k!}\bE\big[\big|L^{k}f_{\balpha,\bgamma}(\bX_{0})\big|\big]<\infty\,.
\end{align*}
\end{prop}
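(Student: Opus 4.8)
The plan is to obtain, for each fixed $N$, a bound of the form $\bE[|L^k f_{\balpha,\bgamma}(\bX_0)|] \le C_N^{k} (r+2k)!^{\theta}$ with $\theta<1$ (or even a clean factorial bound that beats $k!$), so that the series $\sum_k \frac{T^k}{k!}\bE[|L^k f_{\balpha,\bgamma}(\bX_0)|]$ is dominated by a convergent series. The point is that at fixed $N$ all operator norms are finite, so this is really a quantitative version of the standard fact that $e^{tL}$ converges on smooth functions of controlled growth; we just need enough uniformity in $k$ to exchange sum and expectation. The key observation is that $f_{\balpha,\bgamma}$ is a monomial of degree $r$ in $\bx$ (the $J$'s are constants once $\bJ$ is fixed), and each of the four pieces of $L$ in~\eqref{eq:L-splitting} is a first- or second-order differential operator with polynomial coefficients that raises the degree in $\bx$ by at most one: $L_\bJ, L_\bLambda$ preserve degree, $L_\bh$ lowers it, and $L_\Delta$ raises it by at most two (since $\sigma_{0j}x_0$ contributes a constant and the other terms are linear, $(\sum_i \sigma_{ij}x_i)^2\partial_j\partial_j$ applied to a degree-$d$ monomial gives degree $\le d$ from the linear-linear part, but with the constant term $\sigma_{0j}^2\partial_j\partial_j$ lowering degree by two; in all cases degree grows by at most a bounded amount per application of $L$).

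First I would make this precise by expanding $L^k f_{\balpha,\bgamma}$ as a sum over words $w = (w_1,\ldots,w_k)$ with each $w_\ell \in \{L_\bJ, L_\bLambda, L_\bh, L_\Delta\}$, and tracking for each word the resulting polynomial in $\bx$: its degree is at most $r + 2k$, and the number of monomials it contains, together with the magnitude of their coefficients, can be bounded crudely using $\|\bJ\|_2, \|\bLambda\|_2, \|\bh\|_\infty, \|(\sigma_{ij})\|_2$ and the combinatorial factors coming from differentiating a monomial of degree $\le r+2k$ in $N$ variables at most $k$ times. Each application of $L$ contributes: a factor $4$ from the choice of which piece of $L$; a sum over the (at most $N$) values of the new index $i$ or $j$; a derivative hitting one of at most $r+2k$ variables; and a coefficient bounded by the relevant operator norm. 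Collecting, one gets $|L^k f_{\balpha,\bgamma}(\bx)| \le (C N M_{\bJ})^{k}\,(r+2k)^{2k}\, (1+\|\bx\|_\infty)^{r+2k}$ for a constant $C$ depending on $\bC_\star$ and a deterministic constant $M_{\bJ}$ depending on $N$ and $\|\bJ\|_2$ — the important thing being that the dependence on $k$ is at worst $(r+2k)^{2k} \le e^{O(k\log k)}$, so after dividing by $k!$ and multiplying by $T^k$ the series has radius of convergence zero... hence I would instead be more careful.

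The refinement that makes the series converge is to note that differentiation does \emph{not} lose a factor $(r+2k)$ at every step: in $L^k f$ the total number of derivatives applied is exactly the number of $\partial$'s in the word, which is $k$ for words using only $L_\bJ, L_\bLambda, L_\bh$ and can be up to $2k$ counting $L_\Delta$; the combinatorial factor for applying $k'$ derivatives to a monomial $\prod x_{\gamma_l}$ is the number of ways to distribute them, which is bounded by $(r+2k)!/(r)! \le (r+2k)^{2k}$, but crucially the number of distinct words is $4^k$ and the resulting polynomial has degree $\le r+2k$ so at most $N^{r+2k}$ monomials — all finite at fixed $N$. Taking expectations, $\bE[(1+\|\bX_0\|_\infty)^{r+2k}] \le N \sup_i \Cmu(r+2k) \cdot 2^{r+2k}$ by Hypothesis~\ref{hyp:1} and a union bound, and here is where sub-exponentiality could in principle hurt: $\Cmu(r+2k)$ grows, but by~\eqref{eq:unif-mom-bd}-type bounds (applied to $\mu$ via $\Cmu$) it grows no faster than $((r+2k)!)^{1}$ times a constant to the power $r+2k$ — actually $\Cmu$ is merely assumed finite for each $r$, which is enough at fixed $N$ as long as we do not also take $k\to\infty$ faster than the $T^k/k!$ decay. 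The honest statement is: for fixed $N$ and fixed $T$, $\frac{T^k}{k!}\bE[|L^k f_{\balpha,\bgamma}(\bX_0)|] \le \frac{T^k}{k!}(C(N))^k (r+2k)^{2k}\Cmu(r+2k)$, and since $(r+2k)^{2k}/k! \sim e^{2k}(2k)^k/ (\text{something})$ this does \emph{not} obviously converge — so the genuine mechanism must be that, as in the stochastic-Taylor-expansion literature, one uses the \abbr{SDE} well-posedness (finite operator norms) to get $\|P_t f\|$ finite directly and then identifies the series, rather than term-by-term absolute bounds.

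\textbf{The main obstacle}, and where I would focus the real work, is precisely reconciling the factorial growth: I expect the correct argument exploits that $f_{\balpha,\bgamma}$ has \emph{fixed} degree $r$ and that $L_\Delta$ is the only degree-raising term, so words with many $L_\Delta$'s are needed to grow the degree but each also supplies two derivatives that eventually annihilate the polynomial; a bookkeeping of (current degree) vs.\ (number of derivatives applied) shows the effective polynomial degree stays $O(r+k)$ not $(r+2k)$, and more importantly the coefficient bound improves to $(C(N))^k \cdot k!$ rather than $(C(N))^k (r+2k)^{2k}$, because applying a first-order operator with linear coefficients to a degree-$d$ polynomial produces a polynomial with at most $d+1$ times as many terms, giving a product telescoping to $(r+1)(r+2)\cdots(r+k) \le (r+k)!/r! \le C^k k!$. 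Pairing this with $\bE[\|\bX_0\|_\infty^{r+2k}] \le N\, \Cmu(r+2k)$ and using that $\Cmu$ enters only at the \emph{fixed}-$N$ level while the $k$-dependence is controlled by $\frac{T^k}{k!}\cdot C(N)^k k! \cdot \Cmu(r+2k)$ — which converges iff $\Cmu(\ell)$ grows sub-factorially, i.e.\ $\Cmu(\ell) \le \ell!\,c^\ell$ fails but $\Cmu(\ell)\le c^\ell (\ell!)^{1-\delta}$ would suffice — one sees that Hypothesis~\ref{hyp:1}'s sub-exponential tails on $\mu$ give exactly $\Cmu(\ell) \le \ell!\, c^\ell$, which is the borderline case. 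Hence I anticipate the proof actually restricts $T$ implicitly or uses that the $k!$ in the denominator beats the $(r+2k)!/(r+2k)!$... in short, I would carry out the word-expansion bound carefully, extract the sharpest factorial constant $C(N,T)^k \cdot (\text{subfactorial in }k)$, invoke the sub-exponential moment bound~\eqref{eq:unif-mom-bd} transferred to $\mu$, and conclude by comparison with $\sum_k (C(N)T)^k/k! \cdot (\text{poly in }k) < \infty$ for $N \ge N_o(r,T,\bC_\star)$; the delicate part is ensuring all the $N$-dependence is absorbed into the single threshold $N_o$ and all the $k$-dependence decays, which is a matter of being scrupulous about which constants depend on what.
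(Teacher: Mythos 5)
Your proposal gets the surface geometry right (expand $L^k$ into $4^k$ words, decompose $Wf$ into monomials, take expectations term by term) but contains a genuine gap at the exact point you flag as "the main obstacle," and the confusion is not a matter of bookkeeping: it stems from a misreading of the structure of $L$. You assert that $L_\Delta$ raises the degree in $\bx$ by up to two, so that $L^k f$ is a polynomial of degree up to $r+2k$, and you then correctly observe that this would force you into estimating $\bE[\|\bX_0\|^{r+2k}]$ via $\Cmu(r+2k)$, which Hypothesis~\ref{hyp:1} only controls factorially in $\ell$, killing the $T^k/k!$ series. But this premise is false: each of $L_\bJ$, $L_\bLambda$, $L_\bh$, $L_\Delta$ \emph{preserves} the $\bx$-degree exactly. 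The generator has \emph{linear} coefficients ($J_{ij}x_i$, $\Lambda_{ij}x_i$, $h_j$, and $(\sum_i \sigma_{ij}x_i)^2$), so $L_\bJ$ and $L_\bLambda$ trade one $x_j$ for one $x_i$, $L_\Delta$ trades two $x_j$'s for two $x_i$'s (or constants), and $L_\bh$ replaces an $x_j$ by a constant; the paper's bookkeeping convention $x_0\equiv 1$ keeps the formal degree frozen at $r$ throughout. This is the central structural fact — it is precisely the payoff of the linear-drift assumption highlighted in the introduction — and it means only $\Cmu(r)$ for the \emph{fixed} $r$ ever enters, not $\Cmu(r+2k)$.

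The second missing mechanism is how convergence is actually achieved, and why the threshold $N_o(r,T,\bC_\star)$ appears. You treat $\|\bJ\|$ as a fixed constant $M_\bJ$ and worry about a blowup in $k$; the paper instead uses the \emph{moment} bound $\bE[|J_\alpha|^{\ell+1}]\le \ell!\,(\CA/N)^{(\ell+1)/2}$ from \eqref{eq:unif-mom-bd}, which produces a factor $N^{-(k_\bJ+s)/2}$ \emph{and} a factorial in the multiplicities $(n_\ell)$ that ultimately gives at most $k_\bJ!$. Against this one has the monomial count (Lemma~\ref{lem:L-number-of-terms} / Proposition~\ref{prop:monomial-counting-crude}): each application of an operator multiplies the number of monomials by at most $r$ times an extensive factor ($N$, $\cN_\bLambda$, $1$, or $\cN_\bsigma^2$), but, refined by tracking $I_\star$ (the number of \emph{new} distinct $J$-indices, each costing $N$), one is left after assembling \eqref{eq:monomial-crude-bound}--\eqref{eq:w-bd} with a bound of the form $(C\bC_\star r)^k k_\bJ!\,N^{-k_\bJ/2}\sum_{I_\star\le k_\bJ} N^{I_\star}/I_\star!$. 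Dividing by $k!\ge k_\bJ!(k-k_\bJ)!$ and summing, the $k_\bJ$-sum is geometric in $(\delta\sqrt N)^{-k_\bJ}$, which converges precisely when $N>\delta^{-2}$ with $\delta=1/(16Tre\bC_\star)$ — that is where $N_o$ comes from, not from "being scrupulous about constants." Your telescoping heuristic $(r+1)\cdots(r+k)$ is closer in spirit, but it doesn't hold in this form because the number of monomials per step is $rN$, not $r+\text{something}$; the correct compensating factor is the $N^{-k_\bJ/2}$ from the $J$-moments. Finally, your tentative retreat to "use well-posedness of the SDE to get $\|P_tf\|$ finite and identify the series" is a dead end here: the paper really does push through term-by-term absolute summability, which is needed anyway to justify Corollary~\ref{cor:switch-expectation-sum}.
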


As a consequence of Proposition~\ref{prop:absolute-convergence} 
and Fubini--Tonelli, we may use the following expansion. 
\begin{cor}\label{cor:switch-expectation-sum}
Suppose $\bP_{\bA}$, $\bP_{\tbA}$, $\mu$ satisfy Hypothesis~\ref{hyp:1}.
Setting $L$ and $\tilde L$ for their generators, we have that 
\begin{align*}
\bE[f_{\balpha,\bgamma}(\bX_{t})]-\tbE[f_{\balpha,\bgamma}(\tbX_{t})]= & \sum_{k\geq0}\frac{t^{k}}{k!}\Big(\bE[L^{k}f_{\balpha,\bgamma}(\bX_{0})]-\tbE[\tilde L^{k}f_{\balpha,\bgamma}(\bX_{0})]\Big)\,,
\end{align*}
for every $N \ge N_o(r,T,{\bC}_\star)$, every $t <\infty$, and every $s$-tuple of pairs $\balpha$ and
$r$-tuple of indices $\bgamma$.
\end{cor}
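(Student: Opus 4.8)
\emph{Proof plan.} I would deduce this from an explicit bound on $\bE[|L^{k}f_{\balpha,\bgamma}(\bX_0)|]$ obtained by expanding $L^{k}f_{\balpha,\bgamma}$ (a polynomial in the entries of $\bX_0$ and $\bJ$) into monomials and estimating each one. Using \eqref{eq:L-splitting}, write $L^{k}f_{\balpha,\bgamma}=\sum_{w}L_{w_{k}}\cdots L_{w_{1}}f_{\balpha,\bgamma}$ over the $4^{k}$ words $w\in\{\bJ,\bLambda,\bh,\Delta\}^{k}$, so that it suffices to sum $\bE[|L_{w_{k}}\cdots L_{w_{1}}f_{\balpha,\bgamma}(\bX_0)|]$ over $w$ and $k$. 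Each operator sends a monomial $c\prod_{m}J_{\beta_{m}}\prod_{l}x_{\gamma_{l}}$ to a finite sum of monomials of the same shape, and I would record along the way: (i) none of the four operators raises the $x$-degree ($L_{\bJ}$ fixes it, $L_{\bLambda},L_{\bh},L_{\Delta}$ only lower it), so every monomial that appears has $x$-degree $\le r$, hence involves at most $r$ coordinates of $\bX_0$, and by independence of the $X_{i}(0)$ and the power-mean inequality $\bE[|\prod_{l}x_{\gamma_{l}}(\bX_0)|]\le\Cmu(r)$; (ii) each $L_{\bJ}$ step raises the $J$-degree by one, appending a factor $J_{ij}=N^{-1/2}A_{ij}$ with $j$ an existing $x$-index ($\le r$ choices) and $i$ free in $\{1,\dots,N\}$; (iii) the $L_{\bLambda},L_{\bh},L_{\Delta}$ steps leave the $J$-degree alone and, by the $\ell^{1}$- and sup-bounds on $\bLambda,\bh,\bsigma$ from the setup (e.g.\ $\sum_{i}|\Lambda_{ij}|\le C_{\bLambda}$, and $(\sum_{i\ge 0}\sigma_{ij}x_{i})^{2}$ has total coefficient mass $\le 4C_{\bsigma}^{2}$ once the constant term $x_{0}\equiv 1$ is split off), multiply the total coefficient mass by at most a constant $M=M(r,\bC_{\star})$.

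For a word $w$ with $q\le k$ letters equal to $\bJ$, a monomial it produces has $s+q$ $J$-factors, spread over some $I$ distinct pairs with multiplicities $(\ell_{u})$; by independence of distinct entries of $\bA$ and $\bE[|A_{ij}|^{\ell}]\le(\ell-1)!\,\CA^{\ell/2}$ one gets $\bE\big[|\prod_{m}J_{\beta_{m}}(\bJ)|\big]\le N^{-(s+q)/2}\CA^{(s+q)/2}(s+q-I)!$. The crucial combinatorial input is a dichotomy for the row indices introduced by the $L_{\bJ}$ steps: an index creating a genuinely new pair can be chosen in $\le N$ ways, but such a pair can recur (multiplicity $\ge 2$) only if it is diagonal, of the form $(a,a)$ — in which case its later occurrences are \emph{forced} — or if its column index has bounded multiplicity, in which case the pair's multiplicity is $\le r$; in all cases a high-multiplicity $J$-factor comes with at most $s+k$ index choices, not a factor $N$. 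Combining this counting with (ii)--(iii) and the moment bound, and multiplying by $\binom{k}{q}$ for the placement of the $L_{\bJ}$'s, each resulting class of monomials contributes to $\bE[|L^{k}f_{\balpha,\bgamma}(\bX_0)|]$ at most $\mathrm{poly}(k)\,(C_{1}r\CA^{1/2})^{k}\,N^{\,b-(s+q)/2}(s+q-I)!\,\Cmu(r)$, where $b$ is the number of row indices that genuinely range over $\{1,\dots,N\}$.

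Multiplying by $T^{k}/k!$ and summing over $k$, the contributions split into regimes that are all controlled once $N$ is large. When the $J$-part is dominated by fresh, multiplicity-one pairs ($b$ large, $I$ close to $s+q$) the bound is $\le \mathrm{poly}(k)\,(C_{1}Tr\CA^{1/2}\sqrt N)^{k}/k!$, summable for \emph{every} fixed $N$. At the opposite extreme, when all $L_{\bJ}$ steps pile onto one diagonal entry ($q=k$, $I=s+1$, $b=0$) the bound is $\le \mathrm{poly}(k)\,\tfrac{(k-1)!}{k!}(C_{1}Tr\CA^{1/2})^{k}N^{-k/2}=\mathrm{poly}(k)\,(C_{1}Tr\CA^{1/2}/\sqrt N)^{k}/k$, which converges exactly when $N>N_{o}$ for a threshold $N_{o}=N_{o}(r,T,\bC_{\star})$ of order $(C_{1}Tr\CA^{1/2})^{2}$; the intermediate monomial types interpolate between these and are likewise summable for $N\ge N_{o}$ after absorbing polynomial-in-$k$ factors into a slightly larger base. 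Since the diagonal entry driving the binding constraint need not be among the original $s$ pairs, $N_{o}$ does not depend on $s$. Summing the finitely many types gives the proposition.

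The step I expect to be the main obstacle is the counting in the second paragraph: correctly separating the row indices that genuinely range over $\{1,\dots,N\}$ — each contributing a factor $N$ that, against the $N^{-1/2}$ of its own $J$-factor, leaves only a $\sqrt N$ that $1/k!$ absorbs — from those forced to repeat an earlier index (only $\le s+k$ choices) but which drag in the factorially growing moments $\bE[|A_{ij}|^{\ell}]$ of a single entry; and then checking that every monomial produced by $L^{k}$ really obeys this dichotomy, in particular that a non-diagonal pair cannot recur more than $r$ times. It is exactly the forced/diagonal monomials that turn the $k!$ in the denominator into a geometric series in $k$ with ratio $\propto T/\sqrt N$, which is the reason absolute convergence can be claimed only for $N$ above a threshold depending on $T$, $r$ and $\bC_{\star}$.
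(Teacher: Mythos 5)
Your plan essentially targets Proposition~\ref{prop:absolute-convergence} (absolute summability of the Taylor series), from which the Corollary follows by Fubini--Tonelli exactly as the paper does, so at the level of overall structure you are on the same track. The genuine gap is the combinatorial dichotomy in your second paragraph, which you yourself flag as the main worry, and which is in fact false. You claim that a pair introduced by an $L_\bJ$ step can recur (multiplicity $\ge 2$) only if it is diagonal, or else its multiplicity is at most $r$. Counterexample: take $\bxi=(1,2)$ (so $r=2$) and alternate the $L_\bJ$-choices $(i,j)=(3,1)$ and $(i,j)=(1,3)$. After the first step $\bxi=(3,2)$, after the second $\bxi=(1,2)$ again, and so on; after $2m$ applications of $L_\bJ$ the resulting monomial contains $J_{31}^m J_{13}^m$, so each of the two off-diagonal pairs has multiplicity $m$ (in the symmetric case the single pair $\{1,3\}$ has multiplicity $2m$), exceeding $r=2$ as soon as $m\ge 3$. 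The mechanism you overlook is that $L_\bJ$, $L_\bLambda$ and $L_\Delta$ can re-inject a previously removed column index back into $\bxi$, so there is no $r$-cap on the multiplicity of any off-diagonal pair.

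The paper does not rely on any such cap. Arbitrary multiplicity profiles $(n_\ell)$ are permitted; the moment bound $\bE[|A_{ij}|^{\ell}]\le(\ell-1)!\,\CA^{\ell/2}$ of Hypothesis~\ref{hyp:1} then produces a product of factorials that may be factorially large in $k_\bJ$, and Proposition~\ref{prop:monomial-counting-crude} bounds the number of monomials with a given profile by the multinomial coefficient ${k_\bJ \choose I_\star, n_1,\ldots,n_{I_{\balpha\amalg\bbeta}}}$ times $r^k N^{I_\star}$ (and the analogous $\bLambda,\bsigma$ factors). Multiplying these two and cancelling the $\prod n_\ell!$ leaves a factor $k_\bJ!/I_\star!$ (see~\eqref{eq:reza-added}), which is dominated by the $1/k!$ from the Taylor expansion since $k_\bJ\le k$, and summing over the at most $e^{k_\bJ}$ integer partitions of $k_\bJ$ gives a geometric tail in $k$ with ratio $\propto T\bC_\star/\sqrt N$; this is exactly what creates the threshold $N_o(r,T,\bC_\star)$. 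Your final numerology agrees with the paper's, but the intermediate bookkeeping --- in particular the claimed $\mathrm{poly}(k)$ bound on the number of monomials per class --- cannot be grounded in the diagonal/column dichotomy. To repair the argument, replace that dichotomy by the multinomial count of Proposition~\ref{prop:monomial-counting-crude} and carry the factor $k_\bJ!/I_\star!$ explicitly against the $1/k!$.
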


Proceeding hereafter  to prove Proposition~\ref{prop:absolute-convergence},  we 
fix $r,s,\balpha$ and $\bgamma$, and set $f=f_{\balpha,\bgamma}$. 
Aiming for upper bounds on $\bE[|L^{k}f(\bX_0)|]$ which are summable against $T^{k}/k!$, we first
utilize 
\eqref{eq:L-splitting}
to expand $L^{k}$ as a sum over the $4^{k}$ words $W$ in the letters $\{L_{\bJ},L_{\bLambda},L_{\bh},L_{\Delta}\}$
and thereby get the bound
\begin{align}\label{eq:L-decompose-words}
\bE\big[|L^{k}f(\bX_{0})|\big]\leq & 4^k \sup_{W\in\{L_{\bJ},L_{\bLambda},L_{\bh},L_{\Delta}\}^{k}}\bE\big[|Wf(\bX_{0})|\big] \,,
\end{align}
where for every $\bx \in \bR^N$, $Wf(\bx)$ should be understood as $(W_k \cdots W_2 W_1 f)(\bx)$. For every
word $W\in\{L_{\bJ},L_{\bLambda},L_{\bh},L_{\Delta}\}^{k}$, let $k_{\bJ}=k_\bJ(W)$
denote the number of $L_{\bJ}$'s that appear in $W$, and similarly
define $k_{\bLambda}$, $k_{\bh}$, and $k_{\Delta}$, so that $k_{\bJ}+k_{\bLambda}+k_{\bh}+k_{\Delta}=k$ 
and the following structural decomposition of $Wf$ holds. 

\begin{claim}\label{claim:monomial-structure}
For any word $W\in\{L_{\bJ},L_{\bLambda},L_{\bh},L_{\Delta}\}^{k}$ with
$k_{\bJ},k_{\bLambda},k_{\bh},k_{\Delta}$ occurrences of the corresponding symbols,
$Wf$ can be expressed as a sum of (not necessarily distinct) monomials of the form 
\begin{align}\label{eq:monomial-structure}
\phi_{\bbeta,\bbeta',\bzeta',\bzeta,\bxi}(\bx)=\prod_{i=1}^{s}J_{\alpha_i}\prod_{\ell=1}^{k_{\bJ}}J_{\beta_{\ell}}\prod_{\ell=1}^{k_{\bLambda}}\Lambda_{\beta'_{\ell}}\prod_{\ell=1}^{k_{\bh}}h_{\zeta'_{\ell}}\prod_{\ell=1}^{2 k_\Delta} \sigma_{\zeta_\ell} \prod_{\substack{\ell=1}}^{r}x_{\xi_{\ell}}\,,
\end{align}
$\bbeta,\bbeta',\bzeta$ denote the collection of pairs $(\beta_\ell)_{\ell\leq k_
\bJ}$, 
$(\beta'_{\ell})_{\ell\leq k_\bLambda}$, $(\zeta_\ell)_{\ell\leq 2 k_\Delta}$, while
$\bzeta',\bxi$ denote the sequences $(\zeta'_\ell)_{\ell\leq k_\bh}$, $(\xi_\ell)_{\ell\leq r}$
and hereupon we adopt the convention $x_0 \equiv 1$, allowing for $\xi_\ell = 0$ 
as well as $\zeta_\ell \in (0j)_j$.
\end{claim}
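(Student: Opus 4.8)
The plan is to prove the claim by induction on the word length $k=|W|$, the entire content being the effect of a single letter $L_\bullet\in\{L_\bJ,L_\bLambda,L_\bh,L_\Delta\}$ on a monomial of the form~\eqref{eq:monomial-structure}. For $k=0$ there is nothing to do: $Wf=f=f_{\balpha,\bgamma}$ is already of the form~\eqref{eq:monomial-structure} with $k_\bJ=k_\bLambda=k_\bh=k_\Delta=0$ and $\bxi=\bgamma$. For the inductive step, writing $W=W_k\cdots W_1$ and using that $W_{k-1}\cdots W_1 f$ is, by the inductive hypothesis, a finite sum of monomials~\eqref{eq:monomial-structure} with the counts of $W_{k-1}\cdots W_1$, it suffices to check that each $L_\bullet$ maps such a monomial to a finite sum of monomials~\eqref{eq:monomial-structure} in which the single count $k_\bullet$ named by $L_\bullet$ is increased by one (the $\sigma$-count by two), while $s$, $r$, and the fixed prefix $\prod_{i}J_{\alpha_i}$ are untouched.

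For this single-letter step, the two observations that make it routine are: (i) all the scalar factors $J_{\alpha_i},J_{\beta_\ell},\Lambda_{\beta'_\ell},h_{\zeta'_\ell},\sigma_{\zeta_\ell}$ are independent of $\bx$, so every derivative in~\eqref{eq:L-splitting} acts only on $\prod_{\ell=1}^r x_{\xi_\ell}$; and (ii) under the convention $x_0\equiv1$, each $\partial_j$ may be viewed as replacing one index $\xi_\ell$ equal to $j$ by a new index (or by $0$), so the number of $x$-factors stays equal to $r$. Concretely, for $L_\bJ=\sum_{i,j}J_{ij}x_i\partial_j$ the product rule gives $L_\bJ\phi=\sum_{\ell=1}^r\sum_{i=1}^N J_{i\xi_\ell}\,(\text{scalars of }\phi)\,x_i\prod_{\ell'\ne\ell}x_{\xi_{\ell'}}$, each summand being of the form~\eqref{eq:monomial-structure} with $\xi_\ell$ replaced by $i$ and $k_\bJ\mapsto k_\bJ+1$; $L_\bLambda$ behaves identically, producing one extra $\Lambda$-factor. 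The drift term $L_\bh=\sum_j h_j\partial_j$ yields $\sum_{\ell=1}^r h_{\xi_\ell}\,(\text{scalars})\prod_{\ell'\ne\ell}x_{\xi_{\ell'}}$, which, reading the absent factor as $x_0=1$, is of the form~\eqref{eq:monomial-structure} with $\xi_\ell\mapsto0$ and $k_\bh\mapsto k_\bh+1$. Finally $L_\Delta=\sum_j\big(\sum_{0\le i\le N}\sigma_{ij}x_i\big)^2\partial_j^2$ acts through $\partial_j^2\prod_\ell x_{\xi_\ell}=\sum_{\ell\ne\ell':\,\xi_\ell=\xi_{\ell'}=j}\prod_{\ell''\notin\{\ell,\ell'\}}x_{\xi_{\ell''}}$, so that $L_\Delta\phi=\sum_{\ell\ne\ell':\,\xi_\ell=\xi_{\ell'}}\sum_{0\le i,i'\le N}\sigma_{i\xi_\ell}\sigma_{i'\xi_\ell}\,(\text{scalars})\,x_ix_{i'}\prod_{\ell''\notin\{\ell,\ell'\}}x_{\xi_{\ell''}}$; each summand is of the form~\eqref{eq:monomial-structure} with $\xi_\ell\mapsto i$, $\xi_{\ell'}\mapsto i'$ (now ranging in $\{0,\ldots,N\}$), two new $\sigma$-factors, and $k_\Delta\mapsto k_\Delta+1$. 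In every case only finitely many summands arise.

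Assembling, applying the single-letter step $k$ times to $f$ (peeling the letters of $W$ from the inside out) produces a finite sum of monomials~\eqref{eq:monomial-structure} whose counts are exactly $k_\bJ,k_\bLambda,k_\bh,k_\Delta$, which is the claim. I expect no real obstacle here; the only thing to be careful about is the index bookkeeping forced by the $x_0\equiv1$ convention together with the asymmetry that the new $x$-index lies in $\{1,\ldots,N\}$ for $L_\bJ,L_\bLambda$ but in $\{0,\ldots,N\}$ for $L_\Delta$ — both of which are precisely what the allowances $\xi_\ell=0$ and $\zeta_\ell\in(0j)_j$ in~\eqref{eq:monomial-structure} are designed to absorb.
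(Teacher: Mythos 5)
Your proof is correct and matches the paper's reasoning: the paper leaves Claim~\ref{claim:monomial-structure} without a standalone proof block, but the single-letter expansions you write out for $L_\bh\phi$, $L_\bJ\phi$, $L_\bLambda\phi$, $L_\Delta\phi$ are exactly the computations appearing in the proof of Lemma~\ref{lem:L-number-of-terms} (equations \eqref{def:L-H-phi}--\eqref{def:L-Delta-phi}), from which the claim follows by the same induction on word length. You have simply made the implicit inductive bookkeeping explicit, which is fine.
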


In view of Hypothesis~\ref{hyp:1} on $\bP_\bA$ we have that for every $N$, $\ell \ge 0$, and index pair $\alpha$, 
\[
\bE[|J_{\alpha}|^{\ell+1}] \le \ell! \, \Big(\frac{\CA}{N}\Big)^{(\ell+1)/2} \,.
\]
Thus, if $I_{\balpha \amalg \bbeta}$ distinct index pairs appear at multiplicities $(n_{\ell}+1)_{\ell \le I_{\balpha \amalg \bbeta}}$ in the sequence $\balpha \amalg \bbeta$ 
of length $k_\bJ+s$, then by the independence of $(J_\alpha)_\alpha$,
\[
\bE\Big[\Big|\prod_{i=1}^{s}J_{\alpha_i}\prod_{\ell=1}^{k_{\bJ}}J_{\beta_{\ell}}\Big|\Big] \le 
\Big(\frac{\CA}{N}\Big)^{(k_\bJ+s)/2} \,
\prod_{\ell=1}^{I_{\balpha \amalg \bbeta}} n_\ell !  \, \,.
\]
Consequently, with $\bX_0$ independent of $\bJ$ we have in view of the assumed 
bounds on $(\Lambda_{ij})_{i,j}$ $(\sigma_{ij})_{i,j}$ and $(h_{i})_{i}$, that for any term of the form 
\eqref{eq:monomial-structure} with $I_{\bzeta}$ entries such that $\zeta_\ell \not\in (0j)_j$,
\begin{align}\label{eq:monomial-crude-bound}
\bE \Big[\big|\phi_{\bbeta,\bbeta',\bzeta',\bzeta,\bxi}(\bX_0)\big|\Big]
\leq & 
\Big( \frac{\CA}{N} \Big)^{(k_\bJ+s)/2}
\Big(\frac{C_\bLambda}{\cN_{\bLambda}}\Big)^{k_{\bLambda}}
\Big(\frac{C_\bsigma^{2k_\Delta}}{\cN_\bsigma^{I_{\bzeta}}} \Big)
C_\bh^{k_{\bh}}  
\sup_i \{ \bE \big[|X_{i}(0)|^{r}] \} \, \prod_{\ell=1}^{I_{\balpha \amalg \bbeta}} n_\ell !  
\nonumber \\
\le & \Cmu (r) {\bC}_\star^{s}  \frac{{\bC}_\star^{k}}{N^{(k_\bJ+s)/2} \cN_\bLambda^{k_{\bLambda}}
\cN_\bsigma^{I_{\bzeta}}}  \prod_{\ell=1}^{I_{\balpha \amalg \bbeta}} n_\ell !  \,,
\end{align}
using in the last inequality also \eqref{def:Cmu} from Hypothesis~\ref{hyp:1} 
on $\mu$, and the definition of $\bC_\star$.

Our next result is a first step in  controlling the number of monomial terms that can appear 
in the expansion of each word $W\in \{L_{\bJ},L_{\bLambda},L_{\bh},L_{\Delta}\}^{k}$. 
\begin{lem}\label{lem:L-number-of-terms}
For every $k_\bJ, k_\bLambda, k_\bh, k_\Delta$ and every $\bbeta,\bbeta',\bzeta',\bzeta,\bxi$,  if we let $\phi=\phi_{\bbeta,\bbeta',\bzeta',\bzeta,\bxi}$ be as in~\eqref{eq:monomial-structure}, then $L_\bh \phi$, 
$L_\bJ \phi$,  $L_{\bLambda} \phi$ and $L_{\Delta} \phi$ can each be expressed as a sum of at most 
$r$, $r N$, $r \cN_{\bLambda}$ and $r \cN_{\bsigma}^2$ 
many 
such monomials, respectively, each of the same form 
(with possibly different $\bbeta,\bbeta',\bzeta',\bzeta,\bxi$) as~\eqref{eq:monomial-structure},
with the respective $k_{\bJ},k_{\bLambda},k_{\bh}$ or $k_{\Delta}$ increased by one. 
\end{lem}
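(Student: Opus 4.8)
The plan is to prove Lemma~\ref{lem:L-number-of-terms} by directly applying each of the four differential operators $L_\bh$, $L_\bJ$, $L_\bLambda$, $L_\Delta$ to a generic monomial $\phi = \phi_{\bbeta,\bbeta',\bzeta',\bzeta,\bxi}$ of the form~\eqref{eq:monomial-structure} and using the product (Leibniz) rule. The key observation is that $\phi$ is a product of $r$ factors of the form $x_{\xi_\ell}$ (the only factors on which any $\partial_j$ can act, since the $J$'s, $\Lambda$'s, $h$'s and $\sigma$'s are constants in $\bx$), times a constant prefactor. So when we apply a first-order operator $\sum_{i,j} c_{ij} x_i \partial_j$ (or the second-order $L_\Delta$), the Leibniz rule produces one term per choice of which of the $r$ variable-factors $x_{\xi_\ell}$ is being differentiated, and each such term is again a monomial of the same shape with exactly one more $J$ (resp.\ $\Lambda$, $h$, or pair of $\sigma$'s).

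First I would handle $L_\bh = \sum_j h_j \partial_j$. Acting on $\phi$, for each $\ell \le r$ with $\xi_\ell \ne 0$, $\partial_{\xi_\ell}$ kills the factor $x_{\xi_\ell}$ and we pick up $h_{\xi_\ell}$; this is a new monomial of the same form with $k_\bh$ increased by one, $\bzeta'$ augmented by $\xi_\ell$, and $\bxi$ with $\xi_\ell$ removed. Summing over the (at most $r$) values of $\ell$ gives at most $r$ monomials. Next, $L_\bJ = \sum_{i,j} J_{ij} x_i \partial_j$: now for each $\ell \le r$ the derivative $\partial_{\xi_\ell}$ forces $j = \xi_\ell$, but $i$ ranges freely over $\{1,\dots,N\}$, so we get at most $rN$ monomials, each with $k_\bJ$ increased by one ($\beta_{\rm new} = (i,\xi_\ell)$) and $x_{\xi_\ell}$ replaced by $x_i$. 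The case $L_\bLambda = \sum_{i,j}\Lambda_{ij} x_i \partial_j$ is identical except that $\Lambda_{ij} = 0$ unless $i$ lies in the support of the $\xi_\ell$-th column of $\bLambda$, whose size is at most $\cN_\bLambda$ by definition; hence at most $r \cN_\bLambda$ monomials. Finally $L_\Delta = \sum_j \big(\sum_i \sigma_{ij} x_i\big)^2 \partial_j^2$: here $\partial_{\xi_\ell}^2$ acts on $\phi$, and since $\phi$ is multilinear in the $x$'s (each $x_{\xi_\ell}$ appears to a power that, thinking of repeated indices as distinct factors, we treat one factor at a time) the relevant contribution comes from $\partial_{\xi_\ell}\partial_{\xi_m}$ with $\xi_m$ another variable-factor equal to $\xi_\ell$, or more simply one accounts for the second derivative hitting the product of the remaining $x$-factors; in any case the coefficient $\big(\sum_i \sigma_{i,\xi_\ell} x_i\big)^2$ expands into a double sum over $i, i'$ in the support of the $\xi_\ell$-th column of $\bsigma$, contributing two new $\sigma$-factors and (at most) two new $x$-factors, so the count is bounded by $r \cdot \cN_\bsigma^2$ (the factor $r$ for the choice of which variable-factor is hit by $\partial_j^2$, here using $x_0 \equiv 1$ to absorb the case where the second derivative annihilates the monomial, which only decreases the count).

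I do not expect a genuine obstacle here; the lemma is essentially bookkeeping. The one point requiring mild care is the treatment of repeated indices in $\bxi$ and the exact meaning of ``$Wf$ is a sum of monomials'' when $\partial_j$ acts on $x_j^2$ (producing $2x_j$): one must be consistent with Claim~\ref{claim:monomial-structure}, i.e.\ regard $\phi$ as an ordered product of $r$ variable-slots and have each differential operator act by summing over which slot it differentiates, with the harmless convention $x_0\equiv 1$ absorbing vanishing terms and a bounded combinatorial constant (absorbed into ``at most'') covering the multiplicities. With that convention in place, each of the four bounds $r$, $rN$, $r\cN_\bLambda$, $r\cN_\bsigma^2$ is immediate from the range of the free summation index left after the derivative pins down $j$, and the claim that the resulting monomials have the same form with one more occurrence of the corresponding symbol is visible from the computation.
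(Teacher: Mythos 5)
Your proposal is correct and follows essentially the same route as the paper: expand each generator piece, note that $\partial_j$ can only hit the $r$ variable slots $x_{\xi_\ell}$, and count the free summation index ($j$ pinned for $L_\bh$; $i\le N$ free for $L_\bJ$; $i$ restricted to the $\cN_\bLambda$-sparse column for $L_\bLambda$; $(i,i')$ restricted to $\cN_\bsigma^2$ choices for $L_\Delta$), with $x_0\equiv 1$ absorbing the annihilated factors. One minor remark, which applies equally to the paper's own proof: for $L_\Delta$ the second derivative $\partial_j^2$ on a slot $j$ of multiplicity $m(j)$ produces a coefficient $m(j)(m(j)-1)$, so a fully pedantic count of monomials (each with unit coefficient, as required by~\eqref{eq:monomial-structure}) is $\sum_j m(j)(m(j)-1)\,\cN_\bsigma^2\le r(r-1)\cN_\bsigma^2$ rather than $r\cN_\bsigma^2$; since $r$ is fixed this only changes constants in the downstream estimates and is immaterial.
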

\begin{proof}
Fixing $k_{\bJ},k_{\bLambda},k_{\bh},k_{\Delta}$ which sum up to $k$, we proceed by separately considering the effect each of $L_\bh\phi$, $L_{\bJ}\phi$, $L_{\bLambda}\phi$ 
and $L_{\Delta}\phi$ has on the monomial $\phi$.
First, 
\begin{align}\label{def:L-H-phi}
(L_{\bh}\phi) (\bx) = \prod_{\ell=1}^{s}J_{\alpha_\ell}\prod_{\ell=1}^{k_{\bJ}}J_{\beta_{\ell}}\prod_{\ell=1}^{k_{\bLambda}}\Lambda_{\beta'_{\ell}}\prod_{\ell=1}^{k_{\bh}}h_{\zeta'_{\ell}}\prod_{\ell=1}^{2k_\Delta} \sigma_{\zeta_\ell} \sum_{j=1}^N h_{j}\partial_{j}\Big(\prod_{\substack{\ell=1}
}^{r}x_{\xi_{\ell}}\Big)\,,
\end{align}
with non-zero contribution only from $j \in \bxi$, yielding at most $r$ non-zero terms. To each of
these corresponds a monomial of the form of \eqref{eq:monomial-structure}, for 
$k_{\bh}\mapsto k_{\bh+1}$,  $\bzeta' \mapsto \bzeta' \amalg (j)$ and $\bxi \mapsto (\bxi \setminus (j))\amalg(0)$.  Next,
\begin{align}\label{def:L-J-phi}
(L_{\bJ}\phi) (\bx) = & \prod_{\ell=1}^{s}J_{\alpha_\ell}\prod_{\ell=1}^{k_{\bJ}}J_{\beta_{\ell}}\prod_{\ell=1}^{k_{\bLambda}}\Lambda_{\beta'_{\ell}}\prod_{\ell=1}^{k_{\bh}}h_{\zeta'_{\ell}}\prod_{\ell=1}^{2 k_\Delta} \sigma_{\zeta_\ell} \sum_{i,j=1}^N J_{ij}x_{i}\partial_{j}\Big(\prod_{\substack{\ell=1}
}^{r}x_{\xi_{\ell}}\Big) \,,
\end{align}
with non-zero contribution only when $j\in \bxi$. With $i \le N$ the total number of resulting 
non-zero monomials is now at most $rN$, each having the stated form with
$k_{\bJ}\mapsto k_{\bJ}+1$, $\bbeta\mapsto \bbeta\amalg (ij)$ and  $\bxi 
\mapsto (\bxi \setminus (j)) \amalg (i)$.  Likewise, we have  that
\begin{align}\label{def:L-Lambda-phi}
(L_{\bLambda} \phi) (\bx)= & \prod_{\ell=1}^{s}J_{\alpha_\ell}\prod_{\ell=1}^{k_{\bJ}}J_{\beta_{\ell}}\prod_{\ell=1}^{k_{\bLambda}}\Lambda_{\beta'_{\ell}}\prod_{\ell=1}^{k_{\bh}}h_{\zeta'_{\ell}}\prod_{\ell=1}^{2 k_\Delta} \sigma_{\zeta_\ell} \sum_{i,j=1}^N \Lambda_{ij}x_{i}\partial_{j}\Big(\prod_{\substack{\ell=1}
}^{r}x_{\xi_{\ell}}\Big)\,,
\end{align}
with non-zero contributions only for $j\in\bxi$. Enumerating over $i \le N$, gives now 
at most $r\cN_\bLambda$ non-zero monomials, of the stated form, with $k_{\bLambda}\mapsto k_{\bLambda}+1$, $\bbeta'\mapsto \bbeta'\amalg (ij)$ 
and $\bxi \mapsto (\bxi\setminus(j)) \amalg (i)$. Finally,
\begin{align}\label{def:L-Delta-phi}
(L_{\Delta}\phi) (\bx) = & \,\prod_{\ell=1}^{s}J_{\alpha_\ell}\prod_{\ell=1}^{k_{\bJ}}J_{\beta_{\ell}}\prod_{\ell=1}^{k_{\bLambda}}\Lambda_{\beta'_{\ell}}\prod_{\ell=1}^{k_{\bh}}h_{\zeta'_{\ell}}\prod_{\ell=1}^{2 k_\Delta} \sigma_{\zeta_\ell}  \sum_{j=1}^N \big(\sum_{i,i' = 0}^N \sigma_{ij} \sigma_{i'j} x_i x_{i'}\big) \partial_{j}\partial_j \Big(\prod_{\substack{\ell=1}
}^{r}x_{\xi_{\ell}}\Big) \,,
\end{align}
is non-zero only for the summands in which $j\in\bxi$. Enumerating over $0 \le i,i' \le N$ 
(recalling the convention that $x_0\equiv1$), gives at most $r\cN_\bsigma^2$ non-zero monomials, 
of the stated form, with $k_{\Delta}\mapsto k_{\Delta}+1$, $\bzeta \mapsto \bzeta \amalg (ij) \amalg (i'j)$
and $\bxi \mapsto (\bxi \setminus (j,j)) \amalg (i,i')$.
\end{proof}

Fixing $N$, $k$, an $s$-tuple of pairs $\balpha$, an $r$-tuple of indices $\bgamma$ 
and $W\in \{L_\bJ, L_\bLambda, L_\bh, L_\Delta\}^{k}$, upon inductively applying 
Lemma~\ref{lem:L-number-of-terms}, we are able to express $Wf$ as the sum of at most 
\begin{equation}\label{eq:crude-bd}
r^{k} N^{k_\bJ} \cN_{\bLambda}^{k_\bLambda} \cN_{\bsigma}^{2k_\Delta} \,,
\end{equation}
many  non-zero monomials of the form of~\eqref{eq:monomial-structure}.  
Recall that for a monomial $\phi$, we use $I_\bzeta$ for the number of $\zeta_{\ell}\notin (0j)_{j}$, $I_\balpha$ for the number of distinct pairs in $\balpha$, $I_{\balpha \amalg \bbeta}$ for the number of distinct pairs in $\balpha \amalg \bbeta$, and introduce $I_\star = I_{\balpha \amalg \bbeta} - I_\balpha$, which counts the number of distinct pairs in $\{\bbeta\} \setminus \{\balpha\}$. A
careful examination of the proof of Lemma \ref{lem:L-number-of-terms}, yields 
the following significant refinement upon the crude bound of \eqref{eq:crude-bd}.
\begin{prop}\label{prop:monomial-counting-crude} 
Fix $N$, $r,s,k \ge 0$, an $s$-tuple of pairs $\balpha$, an $r$-tuple of indices $\bgamma$, and
$W\in \{L_\bJ, L_\bLambda, L_\bh, L_\Delta\}^{k}$.
Then, 
of the monomials in such expansion of $Wf$, at most 
\begin{equation}\label{eq:bd-monomial-number} 
\, {k_\bJ \choose I_\star, n_1,\ldots,n_{I_{\balpha \amalg\bbeta}}} \, {2 k_\Delta \choose I_\bzeta} \,
r^k \, N^{I_\star}\, 
\cN_{\bLambda}^{k_\bLambda} \cN_{\bsigma}^{I_{\bzeta}}
\end{equation}
have $I_{\bzeta}$ elements of $\bzeta$ with $\zeta_\ell \not\in (0j)_j$, and the
$I_{\balpha \amalg \bbeta}=I_\balpha+I_\star$ distinct pairs in $\balpha \amalg \bbeta$ appear in multiplicities 
$\{n_\ell + \one_{\{\ell > I_\balpha\}}\}_{\ell\le I_{\balpha\amalg\bbeta}}$ within the sequence $\bbeta$ of length $k_\bJ$. (N.b.\ we ordered the $(n_\ell)$ with multiplicities in $\bbeta$ of the distinct pairs of $\balpha$ appearing first, and the multiplicities in $\bbeta$ of the remaining $I_\star$ distinct pairs next.)
\end{prop}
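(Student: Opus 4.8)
The strategy is to re-run the inductive argument behind Lemma~\ref{lem:L-number-of-terms}, but now bookkeeping three separate refinements that the crude count of \eqref{eq:crude-bd} throws away. First, when an $L_\bJ$ is applied and it introduces a new index pair $\beta_{k_\bJ+1} = (ij)$, the index $j$ is constrained to lie in the current $\bxi$ (at most $r$ choices — this gives the $r^k$ factor, as before), but the index $i$ ranges over $\{1,\dots,N\}$ \emph{only when the new pair is genuinely new}; if instead the step re-uses a pair already present in $\balpha\amalg\bbeta$, there are at most $I_{\balpha\amalg\bbeta}$ such choices, which we fold into the $r^k$-type constant rather than paying a factor of $N$. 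Thus only the $I_\star$ steps that create fresh pairs (fresh relative to $\balpha$) cost a factor of $N$, giving $N^{I_\star}$ in place of $N^{k_\bJ}$. Second, the same reasoning applied to $L_\bLambda$ shows $i$ ranges over the support of the relevant column of $\bLambda$, i.e.\ at most $\cN_\bLambda$ choices, giving $\cN_\bLambda^{k_\bLambda}$; and for $L_\Delta$, each of the two new diffusion indices $i,i'$ ranges over the support of a column of $\bsigma$, but a nonzero contribution to $I_\bzeta$ (an index $\notin (0j)_j$) occurs only for those among the $2k_\Delta$ diffusion slots that are actually nonzero, giving $\cN_\bsigma^{I_\bzeta}$ together with a choice of \emph{which} $I_\bzeta$ of the $2k_\Delta$ slots are the nonzero ones — the factor $\binom{2k_\Delta}{I_\bzeta}$. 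Third, and this is the combinatorial heart of the refinement, once we fix the final multiset of pairs appearing in $\bbeta$ — namely $I_\star$ fresh distinct pairs plus the $I_\balpha$ pairs of $\balpha$, with prescribed multiplicities $\{n_\ell+\one_{\{\ell>I_\balpha\}}\}$ — the number of \emph{ordered} sequences $\bbeta$ of length $k_\bJ$ realizing that multiset is exactly the multinomial coefficient $\binom{k_\bJ}{I_\star,n_1,\dots,n_{I_{\balpha\amalg\bbeta}}}$. (Here I am using the normalization of the statement: the $n_\ell$ for $\ell\le I_\balpha$ are the multiplicities \emph{in $\bbeta$} of the distinct pairs of $\balpha$, and the $n_\ell$ for $\ell>I_\balpha$ are the multiplicities-minus-one of the fresh pairs, so that each fresh pair is created once — contributing the $I_\star$ slot of the multinomial — and then possibly re-selected $n_\ell$ further times.)

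Concretely I would set up the induction on $k$ over the word $W$ read left to right, carrying as the inductive hypothesis a bound on the number of monomials of the form \eqref{eq:monomial-structure} with a \emph{prescribed} profile $(I_\star, I_\bzeta, (n_\ell))$ after the first $k$ letters. At each step one of the four cases of Lemma~\ref{lem:L-number-of-terms} applies. The $L_\bh$ step changes none of the profile parameters and multiplies the count by at most $r$. The $L_\bLambda$ step multiplies by at most $r\cN_\bLambda$ and increments $k_\bLambda$. The $L_\Delta$ step multiplies by at most $r\cN_\bsigma^2$ and either leaves $I_\bzeta$ fixed (if some new diffusion index equals $0$) or increments it; summing $\binom{2k_\Delta}{I_\bzeta}\cN_\bsigma^{I_\bzeta}$ over the two sub-cases and using Pascal's identity reproduces $\binom{2(k_\Delta+1)}{I_\bzeta'}\cN_\bsigma^{I_\bzeta'}$ after collecting. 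The $L_\bJ$ step is the one that needs the splitting above: it either re-selects an already-present pair (at most $I_{\balpha\amalg\bbeta}\le k_\bJ + s$ choices, no new $N$, one of the $n_\ell$ increases by one), or it introduces a fresh pair ($\le N$ choices for $i$, and $I_\star$ increases by one, so the ``$I_\star$'' slot of the multinomial absorbs it). Tracking how the multinomial coefficient updates under ``increment one $n_\ell$'' versus ``create a new fresh pair'' is precisely the Pascal/Vandermonde bookkeeping that makes $\binom{k_\bJ}{I_\star,n_1,\dots}$ emerge; one should be slightly careful that at the moment a fresh pair is created it has multiplicity $1$ in $\bbeta$ and thus contributes $0$ to its $n_\ell$, consistent with the $\one_{\{\ell>I_\balpha\}}$ offset in the statement.

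**Expected main obstacle.** The delicate point is the simultaneous bookkeeping of the multinomial coefficient: one must check that the recursion ``at each $L_\bJ$, either bump an existing multiplicity by one or spawn a new distinct pair'' has total solution count bounded by $\binom{k_\bJ}{I_\star,n_1,\dots,n_{I_{\balpha\amalg\bbeta}}}$ with exactly the claimed offsets, and that summing over the (polynomially many, hence absorbable) choices of \emph{which} existing pair is re-selected does not lose more than the $r^k$-type constant. A clean way to organize this is to note that choosing the ordered sequence $\bbeta$ with the prescribed multiset of values is the same as choosing an ordered set partition of $\{1,\dots,k_\bJ\}$ into blocks of sizes $(1,\dots,1;\,n_1,\dots,n_{I_{\balpha\amalg\bbeta}})$ — $I_\star$ singleton ``creation'' blocks plus the multiplicity blocks — times an injective assignment of labels, and the former count is literally the multinomial coefficient; then the induction is just verifying that the step-by-step process enumerates exactly these objects. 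Everything else — the $r^k$, the $\cN_\bLambda^{k_\bLambda}$, the $\cN_\bsigma^{I_\bzeta}\binom{2k_\Delta}{I_\bzeta}$, the $N^{I_\star}$ — follows by the same column-support counting already used in the proof of Lemma~\ref{lem:L-number-of-terms}, now merely kept rather than bounded above by $N$.
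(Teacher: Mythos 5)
The core idea of your plan—splitting the $k_\bJ$ applications of $L_\bJ$ into $I_\star$ ``creation'' steps (each costing a factor $N$) and $k_\bJ-I_\star$ ``re-selection'' steps (costing none), and capturing this split with the multinomial coefficient—is exactly what the paper does, and your treatment of the $\cN_\bLambda^{k_\bLambda}$, $\binom{2k_\Delta}{I_\bzeta}\cN_\bsigma^{I_\bzeta}$ and $N^{I_\star}$ factors matches the paper's proof. The gap is in your two stated identifications of what the multinomial $\binom{k_\bJ}{I_\star,n_1,\dots,n_{I_{\balpha\amalg\bbeta}}}$ counts, both of which are false. You first assert that it equals ``the number of ordered sequences $\bbeta$ realizing the multiset''; that number is $k_\bJ!/\prod_\ell m_\ell!$ with $m_\ell=n_\ell+\one_{\{\ell>I_\balpha\}}$, which differs from the multinomial by the factor $\prod_{\ell>I_\balpha}(n_\ell+1)/I_\star!$, and this factor can be either smaller \emph{or} larger than $1$ (e.g.\ $I_\balpha=0$, $I_\star=2$, $n_1=n_2=1$: sequences $=6$, multinomial $=12$; versus $I_\star=3$, all $n_\ell=0$: sequences $=6$, multinomial $=1$). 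So the multinomial neither equals nor bounds the number of ordered sequences in general. Your alternative ``clean way''—ordered set partitions of $\{1,\dots,k_\bJ\}$ into $I_\star$ singleton ``creation'' blocks plus the $n_\ell$-blocks—gives $k_\bJ!/\prod_\ell n_\ell!$, which is $I_\star!$ times the claimed multinomial, again not the same object.

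The correct reading, which is the one the paper uses, treats the $I_\star$ creation positions as a \emph{single unordered block} of size $I_\star$, not $I_\star$ labeled singletons: one chooses which $I_\star$ of the $k_\bJ$ positions are creations ($\binom{k_\bJ}{I_\star}$ ways), then partitions the remaining $k_\bJ-I_\star$ positions into labeled groups of sizes $n_1,\dots,n_{I_{\balpha\amalg\bbeta}}$ ($\binom{k_\bJ-I_\star}{n_1,\dots,n_{I_{\balpha\amalg\bbeta}}}$ ways). This ``classification of positions'' datum, together with the $\le r$ slot choices at every letter, the $\le N$ choices of $i$ at the $I_\star$ creation steps, and the $\cN_\bLambda$, $\cN_\bsigma$ choices at the $L_\bLambda$, $L_\Delta$ steps, determines the expansion path (the $i$-value at any re-selection step is forced, and a re-selection of a fresh pair $\ell>I_\balpha$ is pinned to the pair introduced at the $(\ell-I_\balpha)$-th creation step in temporal order), so the product is an upper bound on the number of monomials. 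Relatedly, your remark that ``at most $I_{\balpha\amalg\bbeta}$ choices'' at a re-selection step can be ``folded into the $r^k$-type constant'' is both unnecessary (once the classification is fixed, the re-selected pair is determined) and unjustified (since $I_{\balpha\amalg\bbeta}$ can be as large as $s+k_\bJ$, it is not controlled by $r$). I would drop the sequence-counting and set-partition identifications and instead run the induction directly on the triple (classification of positions, slot choices, free index choices), which is what the Pascal-type bookkeeping you allude to should be tracking.
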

\begin{proof} The first improvement in \eqref{eq:bd-monomial-number} over \eqref{eq:crude-bd} 
is from observing that the growth factor $\cN_{\bsigma}$ applies only in those 
$I_\bzeta$ of the $2 k_\Delta$ applications of $L_\Delta$ within $W$ which have 
led to an element $\zeta_\ell \not\in (0j)_j$ (see \eqref{def:L-Delta-phi}), and that there 
are at most ${2 k_\Delta \choose I_\bzeta}$ ways to choose which $I_\bzeta$ elements of
$\bzeta$ are not from the $0$-th row of $\bsigma$. 

Similarly, the growth factor 
$N$ in counting the number of monomials after applying $L_\bJ$ is only relevant 
during the $I_\star$ applications of $L_\bJ$ within $W$ in which a new pair
$(ij)$ is selected (see \eqref{def:L-J-phi}). The left-most term in 
\eqref{eq:bd-monomial-number} counts the number of ways to select the locations of these
$I_\star$ new elements within the $k_\bJ$ long sequence $\bbeta$, and thereafter to 
partition the remaining $k_\bJ-I_\star$ consistently with having the prescribed 
$n_\ell \ge 0$ repeats for each of the $I_{\balpha\amalg\bbeta}$ distinct pairs in question. Putting all this 
together yields the stated bound \eqref{eq:bd-monomial-number}
on the number of relevant monomials in the expansion of $W f$. 
\end{proof}

\begin{proof}[\textbf{\emph{Proof of Proposition~\ref{prop:absolute-convergence}}}]Combining Proposition~\ref{prop:monomial-counting-crude} with the 
bound \eqref{eq:monomial-crude-bound} we deduce that for any word $W$ of length $k$
and any $\balpha$ whose $I_\balpha$ distinct terms appear in multiplicities $(c_\ell)_{\ell\le I_\balpha}$,
\begin{equation}\label{eq:reza-added}
\bE[|W f(\bX_0)|] \le 
\Cmu (r) {\bC}_\star^{s} \,  k_\bJ! \,  \frac{(4 r {\bC}_\star)^{k}}{N^{(k_\bJ+s)/2}}
\sum_{I_\star =0}^{k_\bJ} \frac{N^{I_\star}}{I_\star!} \sum_{(n_\ell)_{\ell \le I_{\balpha\amalg\bbeta}}} \,
\prod_{\ell =1}^{I_{\balpha}} \frac{(n_\ell+c_\ell -1)!}{n_\ell !} \,,
\end{equation}
where the inner sum is over all partitions of $k_\bJ-I_\star$ into $I_{\balpha\amalg\bbeta}$ indistinguishable
integers $n_\ell \ge 0$. Since $ \sum_\ell c_\ell = s$ and $n_\ell +c_\ell \le  k_\bJ+s$ for all $\ell$, 
the right-most 
product is at most $(k_\bJ+s)^s$. Further, the number of $(n_\ell)_\ell$ considered here is at 
most the number of integer partitions of $k_\bJ$, which grows slower 
than $e^{k_\bJ}$ (c.f. the Hardy-Ramanujan asymptotic partition formula~\cite{HardyRamanujan}). Thus, we find that
for $C(r,s,\Cmu,{\bC}_\star)$ finite and any word $W$ of length $k$,
\begin{equation}\label{eq:w-bd}
\bE[|W f(\bX_0)|] \le  \frac{C}{N^{s/2}} \, (4r e {\bC}_\star)^{k-k_\bJ} 
\, k_\bJ! \, (k_\bJ+s)^s \,  \Big( \frac{4 r e {\bC}_\star}{\sqrt{N}} \Big)^{k_\bJ}
\sum_{I_\star =0}^{k_\bJ} \frac{N^{I_\star}}{I_\star!} \,.
\end{equation}
Since $k! \ge k_\bJ! (k-k_\bJ)!$,  the bounds \eqref{eq:w-bd} and \eqref{eq:L-decompose-words} 
will yield the stated absolute convergence of the infinite series. Specifically, fixing $T<\infty$ 
and setting $\delta=1/(16 T r e {\bC}_\star)$, we have that 
\begin{align}
\sum_{k=0}^\infty\frac{T^{k}}{k!}\bE[|L^{k}f(\bX_0)|] \leq & \frac{C}{N^{s/2}}  
\sum_{k =0}^{\infty} \sum_{k_\bJ \le k}\frac{(4 T)^k}{k!}  (4r e {\bC}_\star)^{k-k_\bJ} 
\, k_\bJ! \,(k_\bJ+s)^s \,  \Big( \frac{4 r e {\bC}_\star}{\sqrt{N}} \Big)^{k_\bJ} 
\sum_{I_\star =0}^{k_\bJ} \frac{N^{I_\star}}{I_\star!} \nonumber
\\
\leq & \frac{C}{N^{s/2}} 
\sum_{k' = k-k_\bJ = 0}^\infty \frac{ \delta^{-k'}}{k'!} \, 
\sum_{k_\bJ = 0}^\infty (k_\bJ+s)^s \,  \big( \delta \sqrt{N} \big)^{-k_\bJ} 
\sum_{I_\star =0}^{k_\bJ} \frac{N^{I_\star}}{I_\star!}
\,,
\label{eq:bd-abs-sum}
\end{align}
which is finite for any fixed $N > \delta^{-2}$, 
thereby concluding the proof.
\end{proof}

\subsection{Controlling the differences of the $k$'th order Taylor coefficients}\label{subsec:difference-in-expectations}

By 
Corollary~\ref{cor:switch-expectation-sum}, we have that
\begin{align}
\sup_{t \in [0,T]} \Big|\bE[f(\bX_{t})]-\tbE[f(\tbX_{t})]\Big|\leq & \sup_{t\in [0,T]} \sum_{k\geq0}\frac{t^{k}}{k!}\Big|\bE[L^{k}f(\bX_0)]-
\tbE[L^{k}f(\bX_0)]\Big| \nonumber \\
 \le & \sum_{k\geq0}\frac{(4T)^{k}}{k!}
\sup_{W\in\{L_{\bJ},L_{\bLambda},L_{\bh},L_{\Delta}\}^{k}}\Big|\bE[Wf(\bX_0)]-\tbE[Wf(\bX_0)]\Big| \nonumber \\
\le &
\sum_{k\geq0}\frac{(4T)^{k}}{k!}
\sup_{W\in\{L_{\bJ},L_{\bLambda},L_{\bh},L_{\Delta}\}^{k}} \sum_{\phi \in (Wf)(\bx)} \Big|\bE[\phi(\bX_0)]-\tbE[\phi(\bX_0)]\Big| 
\,,
\label{eq:series-W}
\end{align}
where the last sum is over $\phi$ appearing in the monomial decomposition of $Wf(\bx)$ per Claim~\ref{claim:monomial-structure}.
To bound the differences of expectations on the \abbr{rhs} of \eqref{eq:series-W}, we next 
control the type of monomials $\phi$ of the form~\eqref{eq:monomial-structure} in the
expansion of  $Wf$, for which we may possibly have $\bE[\phi(\bX_0)] \ne \tbE [\phi(\bX_0)]$.
\begin{lem}\label{lem:monomial-counting}
For any $k,s \ge 0$, every $s$-tuple of pairs $\balpha$, and every $W\in \{L_\bJ, L_\bLambda, L_\bh, L_\Delta\}^k$,
the monomials $\phi$ 
 in the expansion of $Wf$ in Claim~\ref{claim:monomial-structure} may have $\bE[\phi(\bX_0)] \ne \tbE[\phi(\bX_0)]$ only if 
\begin{equation}\label{def:KJ-alpha}
k_\bJ +s \ge 3  \qquad \hbox{and} \qquad 
k_\bJ \ge 2 I_\star + I^+_{\balpha,1}  \,,
\end{equation} 
where, as before, 
$I_\star = I_{\balpha \amalg \bbeta} - I_\balpha$ denotes the number of distinct elements in $\{\bbeta\}\setminus \{\boldsymbol \alpha\}$.
\end{lem}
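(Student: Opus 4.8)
The statement is a pure bookkeeping claim about which monomials $\phi$ arising in the expansion of $Wf$ (per Claim~\ref{claim:monomial-structure}) can possibly contribute a nonzero difference $\bE[\phi(\bX_0)]-\tbE[\phi(\bX_0)]$. Since $\bX_0$ is independent of $\bJ$ and $\tbJ$, and the laws of $\bLambda,\bh,\bsigma$ and $\mu$ are common to the two sides, such a monomial can only have $\bE[\phi(\bX_0)]\ne\tbE[\phi(\bX_0)]$ if its $\bJ$-part $\prod_{i=1}^s J_{\alpha_i}\prod_{\ell=1}^{k_\bJ}J_{\beta_\ell}$ has a different expectation under $\bP_\bA$ and $\bP_{\tbA}$. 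By independence of the entries $(J_\alpha)_\alpha$, this factorizes over the $I_{\balpha\amalg\bbeta}$ distinct pairs, and since $\bP_\bA,\bP_{\tbA}$ are mean-zero with matching variance profile $\bm=\tbm$, a distinct pair contributes the same moment to both sides whenever its multiplicity in $\balpha\amalg\bbeta$ is $0$, $1$ (giving $0$ for each), or $2$ (giving $m_{ij}=\tilde m_{ij}$). Hence a nonzero difference forces: (a) at least one distinct pair appears with multiplicity $\ge 3$ in $\balpha\amalg\bbeta$, and (b) no pair appears exactly once — equivalently, in terms of $\bbeta$ alone, every $\alpha_k$-pair not already ``paired up'' within $\balpha$ must be completed by a $\beta_\ell$, and every new pair introduced by $\bbeta$ must appear at least twice.

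\textbf{Deriving the two inequalities.} From (a): the total multiplicity of $\bJ$-factors is $k_\bJ+s$, and if some distinct pair has multiplicity $\ge 3$ while no pair has multiplicity exactly $1$ (which one still has to combine with the ``$\ge 2$ everywhere'' constraint), the minimal way to realize this still needs $k_\bJ+s\ge 3$; this is the first condition in~\eqref{def:KJ-alpha}. For the second, I would count how many of the $k_\bJ$ slots in $\bbeta$ are forced. Recall $I_\star=I_{\balpha\amalg\bbeta}-I_\balpha$ is the number of genuinely new distinct pairs contributed by $\bbeta$; each of these must appear at least twice (else it appears exactly once, violating the no-multiplicity-one rule), consuming at least $2I_\star$ of the $k_\bJ$ slots. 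Separately, among the pairs already present in $\balpha$, those that appear an odd number of times in $\balpha$ — more precisely, the $I_{\balpha,1}$ pairs appearing exactly once in $\balpha$, and (when no pair appears more than twice in $\balpha$, which is exactly the indicator adjustment in the definition of $I^+_{\balpha,1}$) possibly one more forced slot to avoid an overall parity/multiplicity-$2$ obstruction — must receive at least $I^+_{\balpha,1}$ further $\beta_\ell$'s to lift their total multiplicity above one, or, where relevant, above two. These two demands are on disjoint subsets of the $\bbeta$-slots, so $k_\bJ\ge 2I_\star+I^+_{\balpha,1}$, which is the second condition.

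\textbf{Main obstacle.} The genuinely delicate point is the bookkeeping around $I^+_{\balpha,1}$: the quantity $I^+_{\balpha,1}$ equals $I_{\balpha,1}$ plus the indicator that no pair appears more than twice in $\balpha$, and I need to argue that in precisely that case one extra $\beta_\ell$-slot is forced. The reason is that with $s$ fixed $J$-factors from $\balpha$ and all multiplicities in $\balpha$ at most two, if additionally every $\beta$-slot only ever pairs things up to multiplicity exactly two and never three, the whole $\bJ$-product would have all distinct pairs at multiplicity $\le 2$, contradicting condition (a); so at least one $\beta_\ell$ must push some pair to multiplicity three, which is an \emph{additional} slot beyond the $I_{\balpha,1}$ slots used merely to reach multiplicity two on the singleton pairs and the $2I_\star$ slots on the new pairs. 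When some pair already appears $\ge 3$ times in $\balpha$, condition (a) is met for free and no such extra slot is needed, which is why the indicator is switched off. I would present this as a short case analysis on whether $\max$ multiplicity in $\balpha$ is $\le 2$ or $\ge 3$, in each case exhibiting the disjoint collection of forced $\bbeta$-slots and summing their sizes; the rest is immediate from independence and the matching of the first two moments.
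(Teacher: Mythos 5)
Your proof is correct and follows essentially the same route as the paper: identify the two multiplicity constraints (no pair of multiplicity one, some pair of multiplicity at least three) coming from the mean-zero/matching-variance hypotheses via independence, then count the forced $\bbeta$-slots — $2I_\star$ for new pairs, $I_{\balpha,1}$ to complete singletons, and one extra slot when $\balpha$ has no pair of multiplicity $\ge 3$ — noting these forced slots are disjoint. The case analysis in your ``main obstacle'' paragraph (third appearance goes to a singleton of $\balpha$, a doubleton of $\balpha$, or a new pair) is actually slightly more explicit than the paper's one-line remark, but the underlying argument is the same; the ``odd number of times'' phrasing early in your second paragraph is a small red herring (parity is not the right invariant), though you immediately correct it to ``appearing exactly once.''
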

\begin{proof}  By the independence of $\bJ,\tbJ$
and $\mu$, if $\bE[\phi(\bX_0)] \ne \tbE[\phi(\bX_0)]$ for some 
$\phi = \phi_{\bbeta,\bbeta',\bzeta,\bzeta',\bxi}$ as in \eqref{eq:monomial-structure}, then 
\begin{align*}
\bE\Big[\prod_{i=1}^{s}J_{\alpha_i}\prod_{\ell=1}^{k_{\bJ}}J_{\beta_{\ell}}\Big] \neq
\tbE\Big[\prod_{i=1}^{s}J_{\alpha_i}\prod_{\ell=1}^{k_{\bJ}}J_{\beta_{\ell}}\Big]\,,
\end{align*}
which for independent, zero-mean $(J_{ij})_{ij}$ of matching variances $\frac{1}{N} \bm= \frac{1}{N} \tbm$, requires that
simultaneously:
\begin{align}
\label{item:criterion-1} 
& \text{No pair $\alpha_\star$ appears exactly once in the 
    concatenation $\balpha \amalg \bbeta$.} \\
    \label{item:criterion-2} 
    & \text{Some $\alpha_\star$ appears more than twice in 
    the concatenation $\balpha \amalg \bbeta$.}    
\end{align}
The condition~\eqref{item:criterion-1}
 implies that each of the $I_\star$ distinct elements 
in $\{\bbeta\} \setminus \{\balpha\}$ must appear at least twice in $\{\bbeta\}$, 
to which end we need at least $2  I_\star$ applications of $L_\bJ$ to select those elements.
In addition, some other $I_{\balpha,1}$ of the $k_\bJ$ applications of $L_\bJ$ must align exactly 
with the pairs $(\alpha_{ij})$ appearing only once in $\balpha$, so necessarily 
$k_\bJ \ge 2  I_\star + I_{\balpha,1}$. Further, the condition~\eqref{item:criterion-2}
requires $k_\bJ+s \ge 3$ and when no pair appears more than twice in $\balpha$, an extra application
of $L_\bJ$ beyond the preceding $2  I_\star + I_{\balpha,1}$ is needed for producing the third appearance of 
some $\alpha_\star$, as stated in \eqref{def:KJ-alpha}.
\end{proof}

We are now able to prove that the
expectations of monomials of the form $f_{\balpha,\bgamma}(\bX_{t})$ are universal. 

\begin{proof}[\textbf{\emph{Proof of Proposition~\ref{prop:difference-in-expectations}}}]
Fixing $\balpha,\bgamma$, in view of Lemma~\ref{lem:monomial-counting}, it suffices
when bounding the \abbr{rhs} of \eqref{eq:series-W}, 
 to consider only words $W$ and
monomials $\phi$ for which \eqref{def:KJ-alpha} holds. By restricting attention only to monomials for which~\eqref{def:KJ-alpha}, 
holds, we find as in \eqref{eq:reza-added}, that for any $\balpha$ whose $I_\balpha$ distinct terms appear in 
multiplicities $(c_\ell)_{\ell \le I_\balpha}$, and every word $W$ of length $k$ such that $k_\bJ + s\ge 3$, 
\[
\big|\bE[W f(\bX_0)] - \tbE[Wf(\bX_0)]\big| \le 
2 \Cmu (r) {\bC}_\star^{s} \,  k_\bJ! \,  \frac{(4 r {\bC}_\star)^{k}}{N^{(k_\bJ+s)/2}}
\sum_{\{I_\star : k_\bJ \ge 2 I_\star + I^+_{\balpha,1}\}} \frac{N^{I_\star}}{I_\star!} \sum_{(n_\ell)_{\ell \le I_{\balpha\amalg\bbeta}}} \,
\prod_{\ell =1}^{I_{\balpha}} \frac{(n_\ell+c_\ell -1)!}{n_\ell !} \,,
\]
where, as in \eqref{eq:reza-added}, the inner sum runs over all
partitions of $k_\bJ-I_\star$ into $I_{\balpha\amalg\bbeta}$ indistinguishable
integers $n_\ell \ge 0$. Reasoning as we did leading up to~\eqref{eq:w-bd}, we find that 
\begin{align}\label{eq:w-diff-bd}
\big| \bE[W f(\bX_0)] - \tbE[W f(\bX_0)]  \big| 
& \le  \frac{2C}{N^{s/2}} \, (4r e {\bC}_\star)^{k-k_\bJ} 
\, k_\bJ! \, (k_\bJ+s)^s \,  \Big( \frac{4 r e {\bC}_\star}{\sqrt{N}} \Big)^{k_\bJ}
\sum_{\{I_\star : k_\bJ \ge 2 I_\star + I^+_{\balpha,1}\}} \frac{N^{I_\star}}{I_\star!} \,. 
\end{align}
Plugging \eqref{eq:w-diff-bd} into
\eqref{eq:series-W}, as in the derivation of \eqref{eq:bd-abs-sum}, we get for 
$\delta = 1/(16 T r e {\bC}_\star)$ and $N \ge \rho := (2/\delta)^2$,
\begin{align}
\sup_{t \in [0,T]} \Big|\bE[f(\bX_{t})]-\tbE[f(\tbX_{t})]\Big|\
& \leq \frac{2C}{N^{s/2}} 
\sum_{k' \ge 0} \frac{ \delta^{-k'}}{k'!} \, 
\sum_{I_\star \ge 0} \frac{1}{I_\star!} \sum_{k_\bJ \ge 2 I_\star + I^+_{\balpha,1}} (k_\bJ+s)^s \, \delta^{-k_\bJ}   N^{I_\star-k_\bJ/2}  \nonumber \\
& \le \bar C N^{-(s+I^+_{\balpha,1})/2} 
\sum_{I_\star \ge 0} \frac{\rho^{I_\star}}{I_\star!} 
\sum_{k_\bJ \ge 0} (k_\bJ+s)^s 2^{-k_\bJ} 
\,,
\label{eq:diff-in-exp-final-bound}
\end{align}
where $\bar C=2 C e^{-1/\delta} \rho^{I^+_{\balpha,1}/2}$. This completes the proof, as
both series on the \abbr{rhs} of \eqref{eq:diff-in-exp-final-bound} are finite and independent of $N$.
\end{proof}

\begin{rem}\label{rem:symmetric-independent}
In the case of \emph{symmetric} random matrices
$\bA$, $\tbA$ (where only the upper triangular and diagonal elements are independent),
we identify index pairs $\beta = ij$ and $\hat \beta=ji$ as being the same. We do so 
whenever considering $I_\balpha$, $I_{\balpha,1}$, $I^+_{\balpha,1}$,  $I_{\balpha \amalg \bbeta}$, $I_\star$,  and 
the multiplicities $(n_\ell)_\ell$, as well as in the restrictions \eqref{item:criterion-1}--\eqref{item:criterion-2}
imposed on the multiplicities within $\balpha \amalg \bbeta$. Once this is done, the 
only difference in our proof is to replace in \eqref{eq:bd-monomial-number}
the weight $r^k$ by $(2r)^k$. 
\end{rem}

\section{The extension to multi-time polynomial observables}\label{sec:more-general-observables}
In this section, we extend the results of Section~\ref{sec:universality-expectations} to more general observables, namely those that contain coefficients that depend on the driving martingale, and those that depend on the trajectory through multiple times, rather than just one. We then use those extensions to prove  Theorem~\ref{thm:main-expectations}.
To this end, 
fix any $l$, any $(\balpha^{(1)},\ldots,\balpha^{(l)})$ each consisting of $s_i$ pairs, any $(\bgamma^{(1)},\ldots,\bgamma^{(l)})$ each consisting of $r_i$ indices, and also fix $m$ indices $\bxi =( \xi_1,\ldots,\xi_m )$. Fix  $l$ times $0\le t_1\leq \cdots \leq t_l\leq T$ and $m$ times $0\le u_1 \leq \cdots \leq u_m\leq T$. For $f_{\balpha^{(i)},\bgamma^{(i)}}$ as in~\eqref{eq:f-balpha-gamma-def}, consider observables of the form,  
\begin{align}\label{eq:multitime-observable-form}
    g_{(\balpha^{(i)}), (\bgamma^{(i)}), \bxi}(\bt, \bu) = \Big(\prod_{i=1}^l f_{\balpha^{(i)}, \bgamma^{(i)}} (\bX_{t_i})\Big)\Big(\prod_{i=1}^m M_{\xi_i}(u_i)\Big)\,.
\end{align}
Let $\bar r = \sum_i r_i +m$ and
$\bar \balpha$ denote the concatenation $\balpha^{(1)} \amalg \cdots \amalg \balpha^{(l)}$ of length 
$\bar s := \sum_i s_i$.
\begin{prop}\label{prop:bm-difference-in-expectations}
There exist finite $C(\bar r, \bar s, m,l, T, \bC_\star,\Cmu(\bar r))$ such that 
for every $l,m$, every $(\balpha^{(i)})_{i\le l}$, $(\bgamma^{(i)})_{i \le l}$, $\bxi$, every
$\bt \in [0,T]^l$, $\bu \in [0,T]^m$ and
$g(\bt,\bu)= g_{(\balpha^{(i)}),(\bgamma^{(i)}),\bxi} (\bt, \bu)$
as in~\eqref{eq:multitime-observable-form},
\[
\big|\bE [g(\bt,\bu)] -\tbE [g(\bt,\bu)]\big| \leq C 
N^{-(\bar s + I^+_{\bar \balpha,1})/2}\,.
\]
\end{prop}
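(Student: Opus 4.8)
The plan is to reduce Proposition~\ref{prop:bm-difference-in-expectations} to the single-time, martingale-free case already handled in Proposition~\ref{prop:difference-in-expectations}, in two successive reductions. First I would eliminate the martingale factors $\prod_{i=1}^m M_{\xi_i}(u_i)$, and then I would eliminate the multiple time points, reducing everything to expectations of monomials of the form $f_{\balpha,\bgamma}(\bX_0)$ for a single enlarged index set.

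\textbf{Step 1: removing the martingale terms.} Each $M_{\xi}(u)$ is, by \eqref{eq:martingale-def}, an It\^o integral $\sqrt 2\int_0^u (\sum_{i=0}^N \sigma_{i\xi} X_i(r))\,dB_\xi(r)$, whose expectation is zero; it is not independent of the $\bX_{t_i}$ however. The natural device is to expand $\prod_i M_{\xi_i}(u_i)$ using the It\^o product rule / repeated integration by parts, writing it as a sum of iterated stochastic integrals against the driving Brownian motions, with integrands that are polynomials in $(X_i(r),\sigma_{ij})$. Alternatively — and I think more cleanly in this framework — one conditions on $\bJ$ and applies the \emph{semigroup} expansion to the whole functional: the point is that $\bE_\bB[g(\bt,\bu)\mid \bX_0=\bx,\bJ]$ is again expressible via iterated applications of the generator $L$ of \eqref{eq:L-splitting} (extended to track the martingale coordinates), so that $\bE[g(\bt,\bu)]$ has a Taylor-type expansion in which every coefficient is an expectation of a monomial in $(J_{ij}), (x_i), (\Lambda_{ij}), (h_i), (\sigma_{ij})$ evaluated at $\bX_0$, i.e. of exactly the form $\phi_{\bbeta,\bbeta',\bzeta',\bzeta,\bxi}$ from Claim~\ref{claim:monomial-structure}. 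Crucially this replacement does not change the number $\bar s$ of ``external'' $J$'s nor the quantity $I^+_{\bar\balpha,1}$, because the martingale integrands only contribute $\sigma$'s and $x$'s, not new prescribed $J$-pairs. Each $M_{\xi_i}(u_i)$ multiplies $\bar r$ by one (it carries a free index) and multiplies the degree in $x$ by a bounded amount, which is why the constant $C$ is allowed to depend on $m$ and $\bar r$.

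\textbf{Step 2: collapsing the multiple times.} For the product $\prod_{i=1}^l f_{\balpha^{(i)},\bgamma^{(i)}}(\bX_{t_i})$ with $t_1\le\cdots\le t_l$, I would proceed by induction on $l$, peeling off the latest time: write $\bE[\,\cdot\,] = \bE\big[\big(\prod_{i<l} f_{\balpha^{(i)},\bgamma^{(i)}}(\bX_{t_i})\big)\,\bE_\bB[f_{\balpha^{(l)},\bgamma^{(l)}}(\bX_{t_l})\mid \cF_{t_{l-1}}]\big]$ and Taylor-expand the inner conditional expectation $P_{t_l-t_{l-1}}f_{\balpha^{(l)},\bgamma^{(l)}}$ in powers of $L$ about $\bX_{t_{l-1}}$. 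This expresses the $l$-time observable as an absolutely convergent series of $(l-1)$-time observables whose last factor is a monomial $\phi$ in $\bJ$ and $\bX_{t_{l-1}}$; absolute convergence is guaranteed exactly as in Proposition~\ref{prop:absolute-convergence}, now using the moment bounds on $X_i(t_{l-1})$ rather than $X_i(0)$ — these follow from Hypothesis~\ref{hyp:1} together with Gr\"onwall on \eqref{eq:sde-def}, and are uniform on $[0,T]$, which is why the constant depends on $T$. Iterating $l$ times reduces everything to a single $\bE[\,\text{monomial in }\bJ,\bX_0\,]$, and the bookkeeping of the $J$-pairs shows that the relevant exponent is governed by $\bar s$ (the total number of external $J$'s across all $\balpha^{(i)}$, i.e. of $\bar\balpha$) and by $I^+_{\bar\balpha,1}$. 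Then Lemma~\ref{lem:monomial-counting} — applied with $\bar\balpha$ in place of $\balpha$ — together with the combinatorial estimate driving \eqref{eq:diff-in-exp-final-bound} yields the bound $C N^{-(\bar s+I^+_{\bar\balpha,1})/2}$.

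\textbf{Main obstacle.} The delicate point is the combined absolute-convergence/Fubini justification after \emph{both} reductions: once one expands in powers of $L$ at several intermediate times and simultaneously expands the $m$ martingale factors, one obtains a multi-indexed infinite sum, and one must check it converges absolutely \emph{uniformly} over $\bt,\bu\in[0,T]$ so that the expectation, the $\bJ$-sum, and all the time-ordered expansions may be freely interchanged. The single-time estimate \eqref{eq:bd-abs-sum} only barely converges (it needs $N>\delta^{-2}$), and here each of the $l$ time-increments and each of the $m$ martingale factors contributes its own geometric-type series; I would need to choose $\delta$ (equivalently $N_0$) depending on $l,m,T,\bar r,\bar s,\bC_\star$ so that the product of all these series still converges, and verify that the $x$-degrees produced by differentiating through the martingale integrands stay bounded by a constant depending only on $m$ and the $r_i$. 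This is routine in spirit but is where the real care lies; everything downstream is a repackaging of the combinatorics already developed in Section~\ref{sec:universality-expectations}.
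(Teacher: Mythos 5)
Your Step 2 (collapsing the times) is essentially the paper's Lemma~\ref{lem:multitime-diff-in-expectations}: write $\bE_\bB[g(\bt)] = (P_{t_1}f^{(1)} P_{t_2-t_1}f^{(2)}\cdots P_{t_l-t_{l-1}}f^{(l)})(\bX_0)$, expand each $P_s = e^{sL}$, and observe (Claim~\ref{clm:multitime-expansion-vs-onetime-expansion}) that every monomial in $W_1 f^{(1)}\cdots W_l f^{(l)}$ also appears in the expansion of $(W_1\cdots W_l)\,f_{\bar\balpha,\bar\bgamma}$; this makes the multi-time case cost only a combinatorial factor $(k+1)^l l^k$ over the single-time estimate. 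Whether you expand all times at once or peel iteratively is immaterial. But one caution in your convergence remarks: you cannot invoke ``moment bounds on $X_i(t_{l-1})$'' the way Proposition~\ref{prop:absolute-convergence} uses moments of $\bX_0$, because $\bX_{t_{l-1}}$ depends on $\bJ$ and so the expectation does not factorize. The paper always pushes the nested expansion all the way down to $\bX_0$ before taking any expectation, precisely so that the only probabilistic input is independence of the initial data from $\bJ$.

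The genuine gap is Step 1. You offer two routes for the martingale factors --- an It\^o product rule giving iterated stochastic integrals, or an ``extended generator'' on the state $(\bX_t,\bM_t)$ --- but neither is carried through, and neither plugs into the existing monomial framework of Section~\ref{sec:universality-expectations} as stated. The paper instead uses the elementary It\^o identity
\[
M_{\xi}(u) = X_{\xi}(u) - \int_0^u (L\,x_{\xi})(\bX_\tau)\,d\tau\,,
\]
which expresses each $M_{\xi_i}(u_i)$ as a linear combination of a coordinate of $\bX_{u_i}$ (absorbed into $\bgamma$) and a time integral of a generator-applied coordinate. After Fubini the latter becomes $m$ additional time slots $\tau_1,\ldots,\tau_m$, and the whole thing is handled by the multi-time lemma already proved --- so the reduction must actually go in the order \emph{multi-time first, then martingale}, opposite to what you propose. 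Since the $(Lx_\xi)$-integrands contribute only $\Lambda,h,\sigma$ and at most one new $J$-linear term per factor, neither $\bar s$ nor $I^+_{\bar\balpha,1}$ changes, which matches your intuition; but the identity above is the concrete mechanism you are missing. Your extended-generator alternative could in principle work but would require re-deriving Claim~\ref{claim:monomial-structure} through Lemma~\ref{lem:monomial-counting} on $\bR^{2N}$, and the iterated-stochastic-integral route produces combinatorics genuinely different from (and messier than) what Section~\ref{sec:universality-expectations} provides.
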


We proceed to prove Proposition~\ref{prop:bm-difference-in-expectations}, which we thereafter combine with a short combinatorial  estimate bounding the number of terms with specific values of $I^+_{\bar \balpha,1}$
to establish Theorem~\ref{thm:main-expectations}.

\subsection{Proof of Proposition~\ref{prop:bm-difference-in-expectations}}
We start with 
the case of $m=0$ to which we will reduce the case of $m>0$.  
\begin{lem}\label{lem:multitime-diff-in-expectations}
Proposition~\ref{prop:bm-difference-in-expectations} holds when $m=0$.
\end{lem}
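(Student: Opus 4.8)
The plan is to reduce the multi-time statement $g_{(\balpha^{(i)}),(\bgamma^{(i)})}(\bt)$ at times $0\le t_1\le\cdots\le t_l\le T$ to the single-time result of Proposition~\ref{prop:difference-in-expectations} by peeling off the times one at a time, starting from the \emph{last} time $t_l$. The key observation is the Markov/semigroup structure: conditioning on $\cF_{t_{l-1}}$ (equivalently, on $\bX_{t_{l-1}}$, since the process is Markov) and using the Markov property,
\[
\bE\Big[\prod_{i=1}^l f_{\balpha^{(i)},\bgamma^{(i)}}(\bX_{t_i})\Big]
=\bE\Big[\Big(\prod_{i=1}^{l-1} f_{\balpha^{(i)},\bgamma^{(i)}}(\bX_{t_i})\Big)\,\big(P_{t_l-t_{l-1}}f_{\balpha^{(l)},\bgamma^{(l)}}\big)(\bX_{t_{l-1}})\Big)\,.
\]
Now $P_{s}f_{\balpha^{(l)},\bgamma^{(l)}}=e^{sL}f_{\balpha^{(l)},\bgamma^{(l)}}$ expands, by absolute convergence (Proposition~\ref{prop:absolute-convergence}, Corollary~\ref{cor:switch-expectation-sum}), into $\sum_{k\ge0}\frac{s^k}{k!}L^k f_{\balpha^{(l)},\bgamma^{(l)}}$, and by Claim~\ref{claim:monomial-structure} each $L^k f_{\balpha^{(l)},\bgamma^{(l)}}$ is a sum of monomials $\phi_{\bbeta,\dots}$ of the form \eqref{eq:monomial-structure}. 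So $\big(P_{s}f_{\balpha^{(l)},\bgamma^{(l)}}\big)(\bX_{t_{l-1}})$ is (an absolutely convergent series of) monomials in $\bJ$, $\bLambda$, $\bh$, $\bsigma$ and the coordinates of $\bX_{t_{l-1}}$ — which is itself of the form $c\cdot f_{\tilde\balpha,\tilde\bgamma}(\bX_{t_{l-1}})$ with $\tilde\balpha=\balpha^{(l)}\amalg\bbeta$ and $\tilde\bgamma$ the index multiset $\bxi$ of the monomial, up to the bounded deterministic factors $\prod\Lambda\cdot\prod h\cdot\prod\sigma$.

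The second step is to absorb this expansion into the remaining product. Multiplying by $\prod_{i<l}f_{\balpha^{(i)},\bgamma^{(i)}}(\bX_{t_i})$ we obtain a statistic of the \emph{same} type \eqref{eq:multitime-observable-form} but with one fewer time, with the $\bJ$-content $\bar\balpha^{(<l)}\amalg\balpha^{(l)}\amalg\bbeta$ and with the number of $\bX$-coordinates increased by $r$ (the length of $\bxi$ for the monomial), all coefficients still bounded by $\bC_\star$-powers. Iterating this reduction $l-1$ more times collapses everything to $t_1$, at which point we are evaluating at $\bX_{t_1}$, and one final application of the single-time expansion $P_{t_1}=e^{t_1 L}$ brings us to $\bX_0$ where $\bJ\perp\bX_0$ and the moment-matching of Lemma~\ref{lem:monomial-counting} kicks in: a monomial contributes to the difference $\bE-\tbE$ only if every $J$-pair appears twice and some pair appears thrice, in the \emph{full} concatenated $\bJ$-tuple, which contains $\bar\balpha$. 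The bookkeeping then mirrors the proof of Proposition~\ref{prop:difference-in-expectations}: track $I_\star$ (new $\bJ$-pairs), note that the $\cN_\bLambda$, $\cN_\bsigma$ growth factors are cancelled by the smallness $C_\bLambda/\cN_\bLambda$, $C_\bsigma/\cN_\bsigma$ exactly as in \eqref{eq:monomial-crude-bound}, and the $N$ factors from new $\bJ$-pairs are beaten by $N^{-1/2}$ each, leaving a surplus $N^{-(\bar s+I^+_{\bar\balpha,1})/2}$ and a $k$-series that converges against $T^k/k!$ after choosing $\delta=1/(16Tre\bC_\star)$ and $N$ large; one must be slightly careful that the series in the intermediate variables $s=t_i-t_{i-1}\in[0,T]$ and in the total degree $r=\bar r+\sum(\text{accumulated }r)$ stay summable, which is why $C$ is allowed to depend on $\bar r,\bar s,l,T$.

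The main obstacle I anticipate is purely organizational rather than conceptual: controlling the \emph{compounded growth of the degree} $r$ through the $l$ successive semigroup expansions, since each application of $L_\bJ$, $L_\bLambda$, $L_\Delta$ can raise or lower the number of $\bX$-factors, and one needs a uniform-in-$k$ bound on the number of monomials (Proposition~\ref{prop:monomial-counting-crude}) that survives being iterated $l$ times. The clean way to handle this is to prove the $m=0$ case (this is Lemma~\ref{lem:multitime-diff-in-expectations}) by induction on $l$: the inductive hypothesis applied to the statistic obtained after peeling $t_l$ is exactly of the form needed, provided one records that the new $\bgamma$ has bounded length $r'$ depending only on $r,k$, and that summing the resulting bound $CN^{-(\bar s+k_\bJ+I^+_{\bar\balpha\amalg\bbeta,1})/2}$ over $k$ and over the monomials $\phi$ in $L^kf_{\balpha^{(l)},\bgamma^{(l)}}$ — with the multinomial/partition counting of Proposition~\ref{prop:monomial-counting-crude} and the factor $s^k/k!\le T^k/k!$ — converges and yields the claimed exponent $\bar s+I^+_{\bar\balpha,1}$ (using that adjoining $\bbeta$ to $\bar\balpha$ cannot decrease $I^+$, and any genuinely new contribution forces $k_\bJ\ge1$ hence an extra $N^{-1/2}$). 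Once $m=0$ is in hand, incorporating the martingale factors $\prod_i M_{\xi_i}(u_i)$ is deferred to the subsequent reduction (as the excerpt indicates, $m>0$ reduces to $m=0$), so I would stop the present proof at Lemma~\ref{lem:multitime-diff-in-expectations} and treat the $M$-factors separately.
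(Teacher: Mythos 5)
Your second paragraph---peel off the times one at a time via the Markov property, expand each $P_{t_i-t_{i-1}}$ in powers of $L$, and carry the resulting monomials all the way down to $\bX_0$ where the moment-matching of Lemma~\ref{lem:monomial-counting} takes over---is exactly the strategy the paper follows. Where the two diverge is in how the bookkeeping is closed. The paper does \emph{not} induct on $l$: it writes $\bE_\bB[g(\bt)]=(P_{t_1}f^{(1)}P_{t_2-t_1}f^{(2)}\cdots P_{t_l-t_{l-1}}f^{(l)})(\bX_0)$, expands all $l$ semigroups simultaneously into a sum over $(k_1,\ldots,k_l)$, uses $\binom{k}{k_1,\ldots,k_l}\le l^k$ (display~\eqref{eq:factorial-inequality}) to regroup the nested Taylor coefficients by the total degree $k=\sum k_i$, and then invokes Claim~\ref{clm:multitime-expansion-vs-onetime-expansion}: every monomial appearing in $W_1f^{(1)}\cdots W_lf^{(l)}$ also appears in the expansion of $(W_1\cdots W_l)\,f_{\bar\balpha,\bar\bgamma}$ for the \emph{fixed} concatenations $\bar\balpha=\balpha^{(1)}\amalg\cdots\amalg\balpha^{(l)}$ and $\bar\bgamma$. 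This lets the paper apply the single-time estimate~\eqref{eq:w-diff-bd} verbatim, with the original $\bar s=\sum_i s_i$ and only a harmless polynomial prefactor $(k+1)^l$; no $\balpha$-content accumulates across levels.

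Your proposed induction on $l$ (third paragraph) is conceptually salvageable but less clean than you suggest, and you do not discharge the obstacle you flag. After peeling $t_l$, the intermediate statistic has a $\bJ$-tuple of length $\bar s+k_\bJ^{(l)}$, so the inductive hypothesis at level $l-1$ would be invoked with a constant $C(\cdot)$ that grows with this length (the proof of Proposition~\ref{prop:difference-in-expectations} produces constants like $\bC_\star^{\bar s}\rho^{I^+_{\balpha,1}/2}$, roughly geometric in $|\balpha|$). One would then need to check that this accumulation is absorbed by the extra $N^{-k_\bJ^{(l)}/2}$ and the $T^k/k!$ weight and that the $N$-threshold for absolute convergence stays uniform in $l$. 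This is doable but requires precisely the care the paper's global expansion avoids. Also note a misdiagnosis: each application of $L_\bJ,L_\bLambda,L_\bh,L_\Delta$ \emph{preserves} the $\bX$-degree $r$ (in the proof of Lemma~\ref{lem:L-number-of-terms}, $\bxi$ is replaced entry-by-entry, never lengthened), so $r$ does not compound through the iterations; the quantity that would compound under your induction is $\bar s$, not $r$.
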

\begin{proof} Fixing $l$, $(\balpha^{(i)})_{i\le l}$ and $(\bgamma^{(i)})_{i \le l}$, we set here
$f^{(i)}(\bx)=f_{\balpha^{(i)},\bgamma^{(i)}}(\bx)$ and 
 \begin{align}\label{eq:g-multi-time}
g (\bx^{(1)},\ldots,\bx^{(l)}) := & \prod_{i=1}^{l} f^{(i)}(\bx^{(i)}) = \prod_{i=1}^l  J_{\alpha_1^{(i)}} \cdots J_{\alpha_{r_i}^{(i)}} \, x^{(i)}_{\gamma_{1}^{(i)}} \cdots x^{(i)}_{\gamma_{s_i}^{(i)}} \,,
\end{align}
and for any $l$-tuple of times $\bt = (t_1,\ldots, t_l)\in [0,T]^l$, evaluate \eqref{eq:g-multi-time}  
on the argument $(\bX_{t_1},\ldots, \bX_{t_l})$: i.e., let
\[
g(\bt) = g_{(\balpha^{(i)}),(\bgamma^{(i)})} (\bt) = g(\bX_{t_1},\ldots, \bX_{t_l})\,.
\]
We express the expectation
$\bE_{\bB}$ with respect to the Brownian motion of $g(\bt)$, in terms of the (diffusion) 
semi-group operator as 
\begin{align*}
    \bE_\bB[g(\bt)] = \big(P_{t_1} f^{(1)} P_{t_2 - t_1}  f^{(2)} \cdots P_{t_l - t_{l-1}}  f^{(l)} \big) (\bX_0)\,.
\end{align*}
Expanding each semi-group operator  in terms of powers of the generator $L$, the above is precisely 
\begin{align*}
    \sum_{k_1\geq 0} \frac{t_1^{k_1}}{k_1!} L^{k_1}   \Big [ f^{(1)} \sum_{k_2\geq 0} 
    \frac{(t_2-t_1)^{k_2}}{k_2!} L^{k_2} \Big[ f^{(2)} & \cdots \sum_{k_l\geq 0} \frac{(t_l - t_{l-1})^{k_l}}{k_l!} L^{k_l} f^{(l)} \Big]\Big](\bX_0) \\ 
    & = \sum_{k_1,\ldots,k_l \geq 0} \Big( \prod_{i=1}^l \frac{(t_i-t_{i-1})^{k_i}}{k_i!} \Big)
    \big[L^{k_1}f^{(1)} L^{k_2} f^{(2)}  \cdots L^{k_l} f^{(l)}\big](\bX_0)\,.
\end{align*}
Taking the difference in expectations between $\bE$ and 
$\tbE$, upon justifying swapping the expectation with the infinite sum 
(as done in Section~\ref{subsec:switch-expectation-sum}), and using the fact that
\begin{align}\label{eq:factorial-inequality}
{k \choose k_1,\ldots, k_l} l^{-k} \le 
\sum_{\substack{k_1,\ldots,k_l \ge 0 \\ \sum k_i =k}} \, {k \choose k_1,\ldots, k_l} l^{-k} = 1 \,,
\end{align}
for every $k_1,k_2,\ldots,k_l$ such that $k_1+\cdots +k_l= k$, we obtain that 
\begin{align*}
   \big|\bE   [ g(\bt)]  - \tbE[g(\bt)]\big| \leq 
    & \sum_{k\geq 0}\sum_{\substack{k_1,\ldots,k_l \\ \sum_i k_i =k}} \frac{l^k T^{k}}{k!} \!\!\!\!\!\sum_{\substack{W_1,\ldots,W_l \\ W_i \in \{L_\bJ, L_\bLambda, L_\bh ,L_\Delta\}^{k_i}}} \!\!\!\!\!\!\!\!\!\!
     \big| \bE [(W_1 f^{(1)} \cdots W_l f^{(l)})(\bX_0)] -
    \tbE[(W_1 f^{(1)} \cdots W_l f^{(l)})(\bX_0)]\big|\,.
\end{align*}
The following structural property for words appearing in the above will allow us to reduce the analysis 
of multi-time observables to the combinatorial analysis of one-time observables 
$f_{\bar \balpha,\bar\bgamma}=f^{(1)} f^{(2)} \cdots f^{(l)}$,
for $\bar \balpha = \balpha^{(1)} \amalg\cdots \amalg \balpha^{(l)}$ and 
$\bar \bgamma := \bgamma^{(1)} \amalg \cdots \amalg \bgamma^{(l)}$,
which we have already completed.  
\begin{claim}\label{clm:multitime-expansion-vs-onetime-expansion}
Fix $k_1,\ldots,k_l \ge 0$ such that $\sum_{i} k_i=k$ and words 
$W_i\in \{L_\bJ, L_\bLambda, L_\bh ,L_\Delta\}^{k_i}$, $i=1,\ldots,l$,
with $k_\bJ^{i}, k_\bLambda^{i}, k_\bh^{i}, k_\Delta^{i}$, of each appearing, respectively. Then, the function
\[
(W_1 f^{(1)} W_2 f^{(2)} \cdots W_l f^{(l)}) (\bx)
\]
consists of a sum of (not necessarily distinct) monomials of the form 
\begin{align*}
    \phi(\bx) = \prod_{i=1}^{s_1} J_{\alpha_i^{(1)}}\cdots  \prod_{i=1}^{s_l} J_{\alpha_i^{(l)}} \prod_{\ell=1}^{\sum k_\bJ^i} J_{\beta_\ell} \prod_{\ell=1}^{\sum k_\bLambda^i} \Lambda_{\beta'_\ell} \prod_{\ell=1}^{\sum k_\bh^i} h_{\zeta_\ell'}\prod_{\ell =1}^{2\sum k_\Delta} \sigma_{\zeta_\ell} \prod_{\ell=1}^{\sum r_i} x_{\xi_\ell}\,.
\end{align*}
Moreover, each monomial $\phi(\bx)$ appearing in this expansion, 
must also appear in such monomial expansion of 
$W f_{\bar \balpha,\bar \bgamma}$ for  $W = W_1 \cdots W_l \in \{L_\bJ, L_\bLambda, L_\bh ,L_\Delta\}^{k}$.
\end{claim}

\begin{proof}
The structure of the monomials is evident. Every such monomial in $W_1 f^{(1)} W_2 f^{(2)} \cdots W_l f^{(l)}$
must also appear in the monomial expansion of $[W_1 \cdots W_l] f_{\bar \balpha,\bar \bgamma}$ because 
a subset of the terms in the latter are obtained by applying  the letters in $W_l$ to $f^{(l)}$, then the letters in $W_{l-1}$ to $f^{(l-1)} (W_l f^{(l)})$, and so on. Finally, observe that $W_1 \cdots W_l$ is always a word in $\{L_\bJ, L_\bLambda, L_\bh, L_\Delta\}^{k}$. 
\end{proof}

With Claim~\ref{clm:multitime-expansion-vs-onetime-expansion} in hand, we further get that 
\begin{align}\label{eq:multitime-difference-expansion}
|\bE[g(\bt)] - \tbE[g(\bt)]| & \le 
    \sum_{ k \geq 0} \sum_{\substack{k_1,\ldots,k_l \\ \sum k_i=k}}  \frac{4^k l^k T^k}{k!} \sup_{\substack{W_1,\ldots,W_l \\ W_i \in \{L_\bJ, L_\bLambda, L_\bh ,L_\Delta\}^{k_i}}} \sum_{\phi \in (W_1f ^{(1)}\cdots 
    W_l f^{(l)})(\bx)} 
    \!\!\!\!\!\! \big|\bE[\phi(\bX_0)] - \tbE[\phi(\bX_0)]\big| \nonumber \\ 
     & \leq \sum_{k \geq 0} \sum_{\substack{k_1,
     \ldots,k_l \\ \sum k_i=k}}  \frac{(4 l T)^k}{k!} \sup_{\substack{W \in \{L_\bJ, L_\bLambda, L_\bh ,L_\Delta\}^{k}}} 
     \sum_{\phi \in (W f_{\bar \balpha,\bar \bgamma})
      (\bx)} \big|\bE[\phi(\bX_0)] - \tbE[\phi(\bX_0)]\big| \nonumber \\ 
     & \leq \sum_{k \geq 0}  \frac{ (k+1)^l (4l T)^k}{k!} \sup_{W\in \{L_\bJ, L_\bLambda, L_\bh, L_\Delta\}^k } \sum_{\phi \in 
     (W f_{\bar \balpha,\bar \bgamma})(\bx)} \big|\bE[\phi(\bX_0)] - \tbE[\phi(\bX_0)]\big|\,,
\end{align}
where the sums are over the monomials $\phi$ in the decomposition of 
$W_1 f^{(1)} \cdots W_l f^{(l)}$ and that of
$W f_{\bar \balpha,\bar \bgamma}$ per Claim~\ref{clm:multitime-expansion-vs-onetime-expansion}. Note that each summand on the \abbr{rhs} of 
\eqref{eq:multitime-difference-expansion} is at most some $(k+1)^l l^k$ times the corresponding 
summand of~\eqref{eq:series-W} for the choice $f=f_{\bar \balpha,\bar \bgamma}$ for
which we have deduced the bound of~\eqref{eq:w-diff-bd}. Utilizing the latter and the elementary bound
$k+1 \le (k_\bJ+1)(k+1-k_\bJ)$, by proceeding as in the derivation of \eqref{eq:diff-in-exp-final-bound}, 
we find that for $C=C(\bar r,\bar s,\Cmu(\bar r),{\bC}_\star)$ finite, 
$\delta = 1/(16 \, l \, T {\bar r}\, e \, {\bC}_\star)$ positive and $N \ge (2/\delta)^2$,
\begin{align*}
\sup_{ \bt \in [0,T]^l} \big|\bE [g(\bt)]-\tbE[g(\bt)]\big| 
& \leq \frac{2C}{N^{\bar s/2}} 
\sum_{k' \ge 0} \frac{ \delta^{-k'}}{k'!} (k'+1)^l \, \sum_{I_\star \ge 0} \frac{1}{I_\star!} 
 \sum_{k_\bJ \ge 2 I_\star + I^+_{\bar \balpha,1}}
(k_\bJ+\bar s)^{\bar s+l} \, \delta^{-k_\bJ}  \, N^{I_\star-k_\bJ/2} 
 \nonumber \\ &
 \le \bar C N^{-(\bar s+I^+_{\bar \balpha,1})/2} 
\end{align*}
for some finite $\bar C=\bar C(l,\bar r,\bar s, T, \bC_\star,\Cmu(\bar r))$.
\end{proof}

We now add in the driving martingale observables (i.e., $m>0$) and conclude the proof of Proposition~\ref{prop:bm-difference-in-expectations}.
\begin{proof}[\textbf{\emph{Proof of Proposition~\ref{prop:bm-difference-in-expectations}}}]
We reduce the situation $m>0$ to the combinatorial calculations of 
Lemma~\ref{lem:multitime-diff-in-expectations} by utilizing the following expansion from Ito's lemma: 
\begin{align*}
  M_{\xi_i}(u) = X_{\xi_i} (u) - \int_0^u (L \, x_{\xi_i})(\bX_{\tau})d\tau\,.
\end{align*}
When expanding~\eqref{eq:multitime-observable-form} in this manner, the terms containing only 
products of $X_{\xi_i}(u_i)$ 
can be absorbed into $\bgamma$, in which case their difference in expectations has already been handled 
in Lemma~\ref{lem:multitime-diff-in-expectations}, so by linearity it suffices for us to focus on 
handling terms of the form
\begin{align*}
    h_{(\balpha^{(i)}),(\bgamma^{(i)}),\bxi}(\bt,\bu) = \Big(\prod_{i=1}^l f_{\balpha^{(i)},\bgamma^{(i)}} (\bX_{t_i}) \Big)\Big(\prod_{i=1}^m \int_0^{u_i} (Lx_{\xi_i})(\bX_{\tau_i})d\tau_i\Big) 
    = \int_0^{u_1} \cdots \int_0^{u_m} \widehat h(\bt,\btau) d\tau_1\cdots d\tau_m   \,,
\end{align*}   
where $\btau = (\tau_1,\ldots,\tau_m) \in [0,T]^m$ and where, setting 
$f^{(i)}(\bx)=f_{\balpha^{(i)},\bgamma^{(i)}}(\bx)$, 
\begin{align*}
\widehat h(\bt,\btau) := 
\prod_{i=1}^{l} f^{(i)} (\bX_{t_i}) \prod_{i=1}^m (L x_{\xi_i})(\bX_{\tau_i}) \,.
\end{align*}
Thus, fixing $l,m$, $(\balpha^{(i)}),(\bgamma^{(i)}),\bxi$ and letting $h(\bt, \bu) =     h_{(\balpha^{(i)}),(\bgamma^{(i)}),\bxi}(\bt,\bu) $ we obtain after swapping the expectation and integrals that 
\begin{align*}
     \bE\big[h(\bt,\bu)\big]  = \int_0^{u_1} \cdots \int_0^{u_m}    \bE\big[     \widehat h(\bt,\btau) \big] d\tau_1\cdots d\tau_m  
    \,,  
\end{align*}
which thereby yields the following bound on the relevant difference in expectations 
\begin{align*}
   \big|\bE [h(\bt,\bu)] - & \tbE [h(\bt,\bu)]\big| 
    \leq T^m \sup_{\btau \in [0,T]^m} \Big|  \bE\big[     \widehat h(\bt,\btau) \big]
    - \tbE\big[     \widehat h(\bt,\btau) \big]
   \Big| \,.
\end{align*}
Proceeding hereafter \abbr{wlog} to bound the difference in expectations for $\widehat h(\bt,\btau)$, 
we suppose for ease of exposition that $0 \leq t_l = \tau_0 \leq \tau_1\leq \cdots \leq \tau_m$
(the situation where the two groups intertwine is similarly analyzed with the obvious modifications). As done
in the proof of Lemma \ref{lem:multitime-diff-in-expectations},  first expressing $\bE_{\bB}$ 
in terms of the semi-group operator and then expanding that in powers of the generator $L$ we find 
that 
\begin{align*}
 &  \bE_{\bB} \big[     \widehat h(\bt,\btau) \big]
  = P_{t_1}\Big[f^{(1)} P_{t_2 - t_1} \Big[f^{(2)} \cdots P_{t_l-t_{l-1}}\Big[ f^{(l)} P_{\tau_1 - t_l} \big[Lx_{\xi_1} \cdots P_{\tau_m - \tau_{m-1}} Lx_{\xi_m}\big]\Big]\Big]\Big](\bX_0) \\
   & =  \sum_{k \geq 0} \sum_{\substack{(k_i)\geq 0, (\ell_i)\geq 1 \\ \sum k_i + \sum \ell_i = k+m}} 
\prod_{i=1}^l \frac{(t_i-t_{i-1})^{k_i}}{k_i!} \prod_{i=1}^m \frac{(\tau_i-\tau_{i-1})^{\ell_i-1}}{(\ell_i-1)!}   
    L^{k_1}\Big[f^{(1)} \cdots L^{k_l}\Big[ f^{(l)} L^{\ell_1}\big[x_{\xi_1} \cdots L^{\ell_m} x_{\xi_m}\big]\Big]\Big](\bX_0)\,.
\end{align*}

At this point, proceeding as in the derivation of \eqref{eq:multitime-difference-expansion}, up to the 
transformations
 \begin{align*}
  k \mapsto k+m = : \bar k \,, \qquad  l\mapsto l+m =: \bar l \,, \quad \mbox{and} \quad 
  (f^{(l+1)},\ldots,f^{(\bar l)}) \mapsto (x_{\xi_1},\ldots,x_{\xi_m})\,,
\end{align*}
we first use ~\eqref{eq:factorial-inequality} to get the bound
\begin{align*}
  \Big|  \bE\big[     \widehat h(\bt,\btau) \big]
    & - \tbE\big[     \widehat h(\bt,\btau) \big]
   \Big| \le \\
   & 
    \sum_{k \geq 0}\sum_{\substack{(k_i)\geq 0, (\ell_i)\geq 1 \\ \sum k_i + \sum \ell_i = \bar k}} \frac{4^{\bar k} (\bar l T)^k}{k!}\sup_{\substack{W_1,\ldots,W_{l},W'_1,\ldots,W'_m \\ W_i \in \{L_\bJ,L_\bLambda,L_\bh,L_\Delta\}^{k_i} \\ W'_i\in \{L_\bJ,L_\bLambda,L_\bh,L_\Delta\}^{\ell_i}}} 
    \sum_{\phi\in (W_1 f^{(1)} \cdots W'_m  x_{\xi_m}) (\bx)} \big|\bE [\phi(\bX_0)]-\tbE[\phi(\bX_0)]\big| \,,
\end{align*}
with the sum running over monomial decomposition of 
$(W_1 f^{(1)} \cdots W_l f^{(l)} W_1' x_{\xi_1} \cdots W'_m  x_{\xi_m}) (\bx)$. Then, utilizing again 
Claim~\ref{clm:multitime-expansion-vs-onetime-expansion}, as well as the bound $k! \ge \bar k!/(\bar k)^m$, we arrive at 
\begin{align}\label{eq:bm-difference-expansion}
 \Big|  \bE\big[     \widehat h(\bt,\btau) \big]
    & - \tbE\big[     \widehat h(\bt,\btau) \big]
   \Big| \le \nonumber \\
   &  
    \sum_{\bar k \ge m} \Big(\frac{\bar k}{\bar l T}\Big)^m \,
    \frac{(\bar k)^{\bar l} (4 \bar l T)^{\bar k}}{\bar k!} 
   \sup_{W \in \{L_\bJ,L_\bLambda,L_\bh,L_\Delta\}^{\bar k}} \sum_{\phi\in (W f_{\bar \balpha,\bar \bgamma})(\bx)} \big|\bE [\phi(\bX_0)]-\tbE[\phi(\bX_0)]\big| \,,
\end{align}
where as before $\bar \balpha = \balpha^{(1)}\amalg \cdots \amalg \balpha^{(l)}$ is
of length $\bar s = \sum_i s_i$, while $\bar \bgamma$ of length $\bar r = \sum r_i +m$ has now
the additional elements $(x_{\xi_i})_{i\le m}$. Up to this update of $\bar r$ and the immaterial
weight factor $(\bar k/(\bar l T))^m$ of its summands, the expression on the \abbr{rhs} 
of~\eqref{eq:bm-difference-expansion} is the same as that in \eqref{eq:multitime-difference-expansion}.
We thus conclude as in the proof of Lemma~\ref{lem:multitime-diff-in-expectations} that for some 
$C(l,m,\bar r, \bar s, T, \bC_\star,\Cmu(\bar r))$ all $\bt \in [0,T]^l$ and
$\bu \in [0,T]^m$, 
\[
\big|\bE [h(\bt,\bu)] -\tbE [h(\bt,\bu)]\big| \leq C 
N^{-(\bar s + I^+_{\bar \balpha,1})/2}\,.\qedhere
\]
\end{proof}

\subsection{Proof of Theorem~\ref{thm:main-expectations}.}
Fix $T,m,p$, $C_\ba$, $\ba \in \bR^{N^{m}}$ such that $\|\ba\|_\infty \le C_\ba$, and $\bt\in [0,T]^p$. For every $\ell\le m$, fix observables $\cY^{(\ell,1)},\ldots,\cY^{(\ell,p)}\in {\mathfrak F}$ and let $F(\bt)$ be as in~\eqref{eq:F-general-form} with those choices. By linearity of expectations and the 
uniform bound on $\|\ba\|_\infty$, it suffices to show that uniformly over $i_1,\ldots,i_m$, 
\begin{align}\label{eq:wts-general-expectation-universality}
\sup_{\bt \in [0,T]^p} \Big| \bE \big[ \prod_{\ell \le m} \cY_{i_\ell}^{(\ell,1)}(t_1) \cdots \cY_{i_\ell}^{(\ell,p)} (t_p) \big] - \tbE \big[ \prod_{\ell \le m} \cY_{i_\ell}^{(\ell,1)}(t_1) \cdots \cY_{i_\ell}^{(\ell,p)} (t_p) \big]\Big| \le C N^{-1/2}
\end{align}
We denote by $\bar s$ the number of $\cY$ terms appearing in the preceding product which is a coordinate
of $\bG_t$. In case $\bar s=0$, the bound \eqref{eq:wts-general-expectation-universality} follows from 
considering Proposition \ref{prop:bm-difference-in-expectations} at $\bar s =0$, in which case 
$I^+_{\bar \balpha,1}=1$. Otherwise, we expand every term in that product which is a coordinate of $\bG_t$
to obtain a sum of monomials of the form of~\eqref{eq:multitime-observable-form}. 
Each of these monomials has a sequence $\bar \balpha$ of length $\bar s$, and as a result of such expansion 
there are at most $\bar s^{\bar s} N^{I_{\bar \balpha}}$ monomials with precisely $I_{\bar \balpha}$ distinct pairs in 
the sequence $\bar \balpha$. Note that for any $\bar \balpha$,
\[
\bar s+I^+_{\bar \balpha,1} \ge 2 I_{\bar \balpha} + 1 \,.
\]
Indeed, each pair which appears once in $\bar \balpha$, is counted both in $\bar s$ and in $I_{\bar \balpha,1}$, 
all other pairs  are counted at least twice in $\bar s$, and for any $\bar \balpha$ of maximal multiplicity 
two, we have added one to  $I^+_{\bar \balpha,1}$. Consequently, the bound of Proposition 
\ref{prop:bm-difference-in-expectations} on the difference in expectation for each of these $\bar s^{\bar s} N^{I_{\bar \balpha}}$ many monomials
is at most $C N^{-I_{\bar \balpha}-1/2}$ for some constant $C(T, m,p, \bC_{\star}, \Cmu)$. From this, the bound 
\eqref{eq:wts-general-expectation-universality} immediately follows upon enumerating over 
the at most $\bar s$ many choices for $I_{\bar \balpha}$. \qed

\section{Concentration for quadratic observables: Proof of Theorem \ref{thm:conc}}
\label{sec:concentration}

Assuming henceforth that $\bM_t$ is a scaled Brownian motion (i.e., that $\sigma_{ij}$ are identically 
zero for $i\ne 0$), our goal is to prove Theorem \ref{thm:conc} about the uniform over $\bt \in [0,T]^2$
concentration property of the quadratic observable of~\eqref{eq:possible-observables}, 
\[   
F(\bt) = F_{\cY, \cY', \ba}(\bX_{t_1},\bX_{t_2}) = \frac 1{N} \sum_{i} a_i \cY_i(t_1) \cY_i' (t_2)\,,
\]
(for uniformly bounded non-random $\ba = (a_i)_{i}$ 
and $\cY, \cY'$ in the collection ${\mathfrak F} = \{\one_t, \bX_t, \bG_t, \bM_t\}$ of 
\eqref{eq:building-blocks}). To this end, we introduce in Subsection~\ref{subsec:localization} high 
probability localizing sets $\cL_{N,R}$ on which various norms of $\bX_t$ (and 
our observables $F(\bt)$), are uniformly bounded.  Subsection~\ref{subsec:lipschitz} shows that 
on $\cL_{N,R}$, such $F(\bt)$ are $O(N^{-1/2})$-Lipschitz in a mixed $\ell^2$-norm. Combining
these facts we prove Theorem \ref{thm:conc} in Subsection \ref{sec:trajectory-universality}.

\subsection{Localizing the process}\label{subsec:localization}
Denote the $2$-to-$2$ matrix norm by
\[
\|\bJ\|_{2\to 2}: = \sup_{\bx:\|\bx\|=1} \|\bJ \bx\| = \| \bJ^T \|_{2 \to 2} = 
\sup_{\bx:\|\bx\|=1} \big(\sum_{i\leq N} G_i(\bx)^2\big)^{1/2}\,,
\] 
and for each constant $R$ consider the following localization subset of 
 $\cE_N := \bR^N \times \bR^{N^2} \times C([0,T],\bR^N)$,
\begin{equation}\label{def:L-N-R}
\cL_{N,R} := \Big\{(\bX_0 ,\bJ, \bM) \in \cE_N : 
\|\bX_0\|^2 + N \|\bJ\|_{2\to 2}^2 + \sup_{t\in [0,T]} \|\bM_t\|^2 \leq R N \Big\}\,,
\end{equation}
We begin by bounding the probability that $(\bX_0,\bJ,\bM)\notin \cL_{N,R}$.
\begin{lem}\label{lem:L-N-R-complement}
There exists  $C= C(T, C_\mu, C_\bA,C_\bsigma)>0$ and $R_0(T,C_\mu,C_\bA,C_\bsigma)<\infty$, such that for every $R \ge R_0$ 
if $\mu,\bP_{\bA}$ satisfy Hypotheses~\ref{hyp:1}--\ref{hyp:2}, then 
\begin{align*}
    \bP \big(\cL_{N,R}^c\big) \leq \exp (- \sqrt {R N}/C)\,. 
\end{align*}
\end{lem}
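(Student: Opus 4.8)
The plan is to control $\bP(\cL_{N,R}^{c})$ by a union bound over the three events $E_1=\{\|\bX_0\|^2>RN/3\}$, $E_2=\{N\|\bJ\|_{2\to 2}^2>RN/3\}$ and $E_3=\{\sup_{t\le T}\|\bM_t\|^2>RN/3\}$ entering \eqref{def:L-N-R}, and to show that, once $R\ge R_0$, each of these has probability at most $\exp(-\sqrt{RN}/C)$; the prefactor $3$ is then absorbed into $C$ using that $RN\ge R_0$ is large.

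For $E_1$ and $E_2$ the mechanism is the same: pair Hypothesis~\ref{hyp:2} with a crude bound on the relevant mean. The map $\bx\mapsto\|\bx\|$ is $1$-Lipschitz on $(\bR^N,\|\cdot\|)$, and by \eqref{def:Cmu} one has $\bE[\|\bX_0\|]\le(\sum_i\bE[X_i(0)^2])^{1/2}\le(\Cmu(2)\,N)^{1/2}$; choosing $R_0$ large enough that $\sqrt{RN/3}\ge 2\,\bE[\|\bX_0\|]$ gives $E_1\subseteq\{\|\bX_0\|-\bE[\|\bX_0\|]>\tfrac12\sqrt{RN/3}\}$, and the exponential concentration of $\mu$ bounds the probability of the latter by $C_\mu\exp(-\sqrt{RN/3}/(2C_\mu))$, which, after enlarging $R_0$ to absorb the prefactor into the exponent, is at most $\exp(-\sqrt{RN}/C)$. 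For $E_2$, since $\bJ=N^{-1/2}\bA$ we have $N\|\bJ\|_{2\to2}^2=\|\bA\|_{2\to2}^2$, the map $\bA\mapsto\|\bA\|_{2\to2}$ is $1$-Lipschitz for the Frobenius norm (hence for the Euclidean norm on $\bR^{N^2}$, or on $\bR^{N(N+1)/2}$ in the symmetric case, up to a factor $\sqrt2$), and there is the standard bound $\bE[\|\bA\|_{2\to2}]\le K\sqrt N$ for a constant $K$ depending only on $\sup_{ij}m_{ij}$ and $C_\bA$ --- this is precisely the meaning of the normalization $\bE[\|\bJ\|_2]=O(1)$, and follows from standard estimates for random matrices with independent mean-zero entries of bounded variance profile and uniformly sub-exponential tails (e.g.\ \cite{Vershynin}). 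With this input the argument for $E_1$ applies verbatim, now using the exponential concentration of $\bP_\bA$.

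For $E_3$, since $\sigma_{ij}=0$ for $i\ne0$ we have $M_j(t)=\sqrt2\,\sigma_{0j}B_j(t)$, so $\sup_{t\le T}\|\bM_t\|^2\le 2C_\bsigma^2\sum_{j\le N}V_j$ with $V_j:=(\sup_{t\le T}|B_j(t)|)^2$ i.i.d.\ across $j$. By the reflection principle $\bP(V_j>x)\le 4e^{-x/(2T)}$, so the $V_j$ are sub-exponential with parameter of order $T$, while Doob's $L^2$-inequality gives $\bE[V_j]\le4T$. A Bernstein bound for the i.i.d.\ sum then shows that, once $R_0\ge T$ is large enough that $RN/(6C_\bsigma^2)-4TN\ge RN/(12C_\bsigma^2)$, $\bP(E_3)\le\bP(\sum_j V_j>RN/(6C_\bsigma^2))\le\exp(-cRN/T)$, which is stronger than $\exp(-\sqrt{RN}/C)$ for $RN\ge1$.

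Combining the three estimates and taking $C$ to be a sufficiently large multiple of $\max\{C_\mu,C_\bA,T\}$, with $R_0$ chosen large enough (in terms of $T$, $C_\bsigma$, the concentration constants $C_\mu,C_\bA$, and the moment/variance-profile bounds from Hypothesis~\ref{hyp:1}) to make every threshold exceed twice the corresponding mean and to absorb all constant and polynomial prefactors into the exponent, yields the lemma. The only genuinely non-elementary ingredient is the operator-norm bound $\bE[\|\bA\|_{2\to2}]=O(\sqrt N)$; everything else is a routine pairing of Lipschitz concentration (Hypothesis~\ref{hyp:2}) with second-moment bounds. Note that the reason one obtains only $\exp(-\Omega(\sqrt N))$ here, rather than $\exp(-\Omega(N))$, is exactly that Hypothesis~\ref{hyp:2} guarantees merely sub-exponential (not Gaussian) concentration of $\|\bX_0\|$ and $\|\bJ\|_{2\to2}$ about their means.
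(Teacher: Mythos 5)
Your proof is correct and its structure (union bound over the three events in the definition of $\cL_{N,R}$) matches the paper's, but two of the three bounds are obtained by genuinely different arguments. For the initial-data event the paper applies a Nagaev-type heavy-tail inequality to $\sum_i X_i(0)^2$, using that $X_i(0)$ has exponential tails; you instead treat $\|\bX_0\|$ directly as a $1$-Lipschitz function of $\bX_0$, bound its mean by $\sqrt{\Cmu(2)N}$ via Jensen, and invoke Hypothesis~2. This is arguably cleaner, since it reuses the same concentration hypothesis as the matrix-norm piece rather than bringing in a separate i.i.d.\ sum inequality; the price is that you rely on Hypothesis~2 for the initial data, whereas the paper's route would in principle work under the weaker assumption of exponential tails without Lipschitz concentration of $\mu$. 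For the Brownian-supremum event the paper uses Doob's maximal inequality applied to the exponential submartingale $\exp(\delta\|\bM_t\|^2)$; you decompose $\sup_t\|\bM_t\|^2\le 2C_\bsigma^2\sum_j(\sup_t|B_j(t)|)^2$ and apply a Bernstein bound to the i.i.d.\ sub-exponential terms. Both yield a bound of order $\exp(-cRN/T)$, which over-delivers relative to the stated $\exp(-\sqrt{RN}/C)$. For the operator-norm event your argument is the same as the paper's; the only difference is that you cite generic Vershynin-style estimates for $\bE[\|\bA\|_{2\to2}]=O(\sqrt N)$, while the paper pins this down with Lata\l{}a's theorem (which requires precisely the uniform second- and fourth-moment bounds that follow from Hypothesis~1, and in the symmetric case is applied to the upper and lower triangular parts separately). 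Your parenthetical handling of the symmetric case via the $\sqrt2$ Lipschitz factor is correct and mirrors what the paper needs implicitly.
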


\begin{proof} We bound $\cL_{N,R}^c$ by the union of the events where each of the three norms is greater than $\sqrt{RN/3}$. First, since $\bM_t$ is a Brownian motion (scaled by $(\sigma_{0j})_j$), 
by Doob's maximal inequality for the sub-martingale $\exp(\delta \|\bM_t\|^2)$, 
we have for some $C(C_\bsigma)>0$ any $R \ge T R_0(C_\bsigma)$ and all $N$, 
\begin{align}\label{eq:bd-M-sup}
    \bP_{\bB} \Big(\sup_{t\in [0,T]} \|\bM_t\| >\sqrt{RN/3}\Big)\leq \exp (- R N/(C T))\,.
\end{align}
Next, since $\mu$ satisfies Hypotheses~\ref{hyp:1}--\ref{hyp:2}, the independent
$X_i(0)$ have uniform (in $i$ and $N$), second moments and exponential tails.
Hence, applying ~\cite[Theorem 3]{Negaev}
for the centered sum of i.i.d.\ variables that stochastically dominate 
$X_i^2(0)$, we have for some $C(C_\mu)>0$, 
any $R \ge R_0(C_\mu)$ 
and all $N$,
\begin{align*}
    \mu\Big(\|\bX_0\|^2 >RN/3\Big)\leq \exp( -\sqrt{RN}/C)\,.
\end{align*}
It thus remains only to show that when $\bP_{\bA}$ satisfies Hypothesis~\ref{hyp:2}, 
we have for some $C(C_\bA)>0$ any $R \ge R_0(C_\bA)$ and all  $N$,  
\begin{align}\label{eq:matrix-norm-bound}
    \bP_{\bA} (\|\bA\|_{2\to 2} >\sqrt{RN/3}) \leq  \exp( - \sqrt{RN}/C)\,.
\end{align}
To this end, recall~\cite[Theorem 2]{Latala} that there exists a universal constant $C$ such that for any matrix $\bA$ with independent, 
zero-mean entries of second moments $m_{ij}$ and fourth moments $b_{ij}$,  
\begin{align*}
    \bE_{\bA} [\|\bA\|_{2\to 2}]\leq C\Big(\max_{i \le N} \Big(\sum_{j\le N} m_{ij}\Big)^{1/2}+ \max_{j \le N}\Big(\sum_{i \le N} m_{ij} \Big)^{1/2} + \Big(\sum_{1\le i,j\le N} b_{ij}\Big)^{1/4}\Big)\,.
\end{align*}
For $\bP_{\bA}$ satisfying Hypothesis~\ref{hyp:1}, $b_{ij}$ and $m_{ij}$ are bounded 
uniformly in $i,j$ and $N$ (see \eqref{eq:unif-mom-bd}). Hence, in the case where $\bA$ is composed of independent entries, for some $C(C_\bA)$ finite and all $N$,
\begin{align}\label{eq:bd-2-2-mean}
     \bE_{\bA} [\|\bA\|_{2\to 2}] \leq C\sqrt{N} \,.
\end{align} 
Likewise, representing a symmetric $\bA$ as $\bA = \bA^+ + \bA^-$, with $\bA^+$ the upper triangle
(including the diagonal) part of $\bA$ and $\bA^-$ its lower triangle part, \cite[Theorem 2]{Latala} holds for 
the matrices $\bA^-$ and $\bA^+$ of zero-mean, independent entries (with uniformly bounded
forth moments). Thus, \eqref{eq:bd-2-2-mean} holds also in this case up to a factor of 2. Thanks to \eqref{eq:bd-2-2-mean}, 
if  $\sqrt{R} \ge 4 C$ then
\[
\bP_{\bA} \big(\|\bA\|_{2\to 2} > \sqrt{RN/3} \big) \leq 
\bP_{\bA} \big(|\,\|\bA\|_{2\to 2} - \bE_{\bA}[\|\bA\|_{2\to 2}]\,| > \sqrt{RN}/4 \big) \,.
\]
Recall that $\|\bA\|_{2\to 2}$, which is the largest singular value of $\bA$, 
is $1$-Lipschitz in its entries (endowed with the Euclidean norm, on $\bA^+$ 
when $\bA$ assumed symmetric). Indeed, this follows by combining the triangle inequality $|\|\bA\|_{2\to2}  - \|\bB\|_{2\to2}|\le \|\bA - \bB\|_{2\to2}$ with the domination of the operator norm by the Frobenius norm, $\|\bA - \bB\|_{2\to 2} \le \|\bA - \bB\|_{F}$.
Hypothesis~\ref{hyp:2} for $\bP_{\bA}$ thus yields the bound \eqref{eq:matrix-norm-bound}.
\end{proof}

We further have on the sets $\cL_{N,R}$ the following localization for
both $(\bX_t)_{t\in [0,T]}$ and $(\bG_t)_{t \in [0,T]}$.
\begin{prop}\label{prop:X-t-localization}
There exists $R_0(T,\bC_\star)$ and $C_0(\bC_\star)$ such that if $R \ge R_0$, 
and  $(\bX_0, \bJ,\bM)\in \cL_{N,R}$, then 
\begin{align}\label{eq:x-t-bound-localized}
\frac{1}{\sqrt{N}} \sup_{t\in [0,T]} \{  \|\bX_t\| \} \leq e^{C_0 \sqrt R T}\,, \qquad 
\frac{1}{\sqrt{N}} \sup_{t\in [0,T]} \{ \|\bG_t\| \} \leq e^{C_0 \sqrt R T}\,. 
\end{align}
In addition, for every $\ba$ such that $\|\ba\|_\infty \le C_\ba$ (uniformly over $N$) and every $\cY, \cY' \in {\mathfrak F}$, if $F(\bt)$ is as in~\eqref{eq:possible-observables}, we have for all $k \ge 1$,   
\begin{align}\label{eq:x-t-moment-bound}
 \limsup_{N\to\infty} \bE\big[ \big( \frac{1}{\sqrt{N}} \sup_{t\in [0,T]}  & \|\bX_t\| \big)^k \big] <\infty\,,
  \quad 
\limsup_{N\to\infty} \bE \big[ \big(\frac{1}{\sqrt{N}} \sup_{t\in [0,T]} \|\bG_t\| \big)^k\big] <\infty\,, \\
& \limsup_{N\to\infty} \bE\big[ \sup_{\bt\in [0,T]^2}  |F(\bt)|^k \big] <\infty \,.
\label{eq:bd-F-sup}
\end{align}
\end{prop}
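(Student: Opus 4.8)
The plan is to prove the three claims in turn: first the deterministic bound~\eqref{eq:x-t-bound-localized} on the localizing set $\cL_{N,R}$ of~\eqref{def:L-N-R}, by a Grönwall estimate; then the moment bounds~\eqref{eq:x-t-moment-bound}, by splitting expectations over $\cL_{N,R}$ and $\cL_{N,R}^c$ and invoking Lemma~\ref{lem:L-N-R-complement}; and finally~\eqref{eq:bd-F-sup}, which follows from~\eqref{eq:x-t-moment-bound} by Cauchy--Schwarz. The key preliminary observation is that since $\sigma_{ij}=0$ for $i\neq 0$, the driving martingale $\bM_t$ carries no dependence on $\bX_t$, so I would pull it out: $\bZ_t := \bX_t-\bM_t$ solves the random \emph{linear ODE} $\tfrac{d}{dt}\bZ_t=(\bJ^T+\bLambda^T)(\bZ_t+\bM_t)+\bh$ with $\bZ_0=\bX_0$. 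The two bounds imposed on $\bLambda$ give $\|\bLambda\|_{2\to 2}\le(\|\bLambda\|_{1\to 1}\|\bLambda\|_{\infty\to\infty})^{1/2}\le C_\bLambda$ (max row sum $\le C_\bLambda$ by the first, max column sum $\le\cN_\bLambda\cdot C_\bLambda/\cN_\bLambda=C_\bLambda$ by the second), and $\|\bh\|\le C_\bh\sqrt N$, while $\|\bJ^T\|_{2\to 2}=\|\bJ\|_{2\to 2}$.

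With $\kappa:=\|\bJ\|_{2\to 2}+C_\bLambda$, $u(t):=\sup_{s\le t}\|\bZ_s\|$ and $m:=\sup_{s\le T}\|\bM_s\|$, the integral form of the ODE gives $u(t)\le\|\bX_0\|+\int_0^t\kappa(u(s)+m)\,ds+tC_\bh\sqrt N$, so Grönwall yields $u(T)\le(\|\bX_0\|+T\kappa m+TC_\bh\sqrt N)e^{\kappa T}$ and hence $\sup_{t\le T}\|\bX_t\|\le u(T)+m$. On $\cL_{N,R}$ one has $\|\bX_0\|\vee m\le\sqrt{RN}$ and $\kappa\le\sqrt R+C_\bLambda$, so plugging these in and absorbing the resulting polynomial-in-$\sqrt R$ prefactor into the exponential (legitimate once $R\ge R_0(T,\bC_\star)$) gives the first bound of~\eqref{eq:x-t-bound-localized} for a suitable $C_0(\bC_\star)$. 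The bound on $\bG_t=\bJ^T\bX_t$ then follows from $\|\bG_t\|\le\|\bJ\|_{2\to 2}\|\bX_t\|\le\sqrt R\,\|\bX_t\|$ on $\cL_{N,R}$, again absorbing the factor $\sqrt R$ into the exponent.

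For~\eqref{eq:x-t-moment-bound} I would write $\bE[(\tfrac1{\sqrt N}\sup_t\|\bX_t\|)^k]$ as the sum of its contributions from $\cL_{N,R}$ and $\cL_{N,R}^c$. On $\cL_{N,R}$ the integrand is at most $e^{kC_0\sqrt R T}$, a constant free of $N$. On $\cL_{N,R}^c$, Cauchy--Schwarz together with Lemma~\ref{lem:L-N-R-complement} bounds the contribution by $\bE[(\tfrac1{\sqrt N}\sup_t\|\bX_t\|)^{2k}]^{1/2}\,e^{-\sqrt{RN}/(2C)}$, so it suffices to have a crude a priori bound $\bE[(\tfrac1{\sqrt N}\sup_t\|\bX_t\|)^{2k}]\le e^{c\sqrt N}$ for $N$ large, after which choosing $R\ge R_0$ with $\sqrt R/(2C)>c$ kills this term as $N\to\infty$. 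Such a bound (in fact an $O(1)$ bound) comes from the same Grönwall inequality applied without localizing, $\sup_t\|\bX_t\|\le(\|\bX_0\|+T\kappa m+TC_\bh\sqrt N+m)e^{\kappa T}$: Hölder splits the $2k$-th moment into factors $\bE[\|\bX_0\|^q]$ and $\bE[m^q]$, both $O(N^{q/2})$ (by Hypothesis~\ref{hyp:1} via Jensen, respectively by the submartingale estimate for $\exp(\delta\|\bM_t\|^2)$ from Lemma~\ref{lem:L-N-R-complement}), times $\bE[e^{q\kappa T}]$, which is bounded uniformly in $N$ for $N$ large because $\bE\|\bJ\|_{2\to 2}=O(1)$ and $\|\bJ\|_{2\to 2}$ has $e^{-\Omega(\sqrt N)}$ upper tails (Latała's bound \cite[Theorem~2]{Latala} and Lipschitz concentration, exactly as in the proof of Lemma~\ref{lem:L-N-R-complement}). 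The moment bound for $\bG_t$ then follows from $\|\bG_t\|\le\|\bJ\|_{2\to 2}\|\bX_t\|$ and one more Cauchy--Schwarz, using $\bE[\|\bJ\|_{2\to 2}^{2k}]=O(1)$.

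Finally, for $F$ as in~\eqref{eq:possible-observables}, Cauchy--Schwarz in the index $i$ together with $\|\ba\|_\infty\le C_\ba$ gives $\sup_{\bt\in[0,T]^2}|F(\bt)|\le C_\ba\big(\tfrac1{\sqrt N}\sup_t\|\cY(t)\|\big)\big(\tfrac1{\sqrt N}\sup_t\|\cY'(t)\|\big)$; for each of the four building blocks in~\eqref{eq:building-blocks}, $\tfrac1{\sqrt N}\sup_t\|\cdot\|$ is identically $1$ (for $\one_t$), controlled by~\eqref{eq:x-t-moment-bound} (for $\bX_t,\bG_t$), or has uniformly bounded moments by the $\bM$-estimate used above, and a last Cauchy--Schwarz in the expectation yields~\eqref{eq:bd-F-sup}. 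I expect the only genuinely delicate point to be controlling the random factor $e^{\kappa T}$ produced by Grönwall, with $\kappa$ built from $\|\bJ\|_{2\to 2}$: this is handled deterministically (by $\sqrt R$) on $\cL_{N,R}$ and in $L^p$ via the uniform-in-$N$ exponential moments of $\|\bJ\|_{2\to 2}$; one also needs that the crude $2k$-th moment bound on $\cL_{N,R}^c$ is dominated by $\bP(\cL_{N,R}^c)\le e^{-\sqrt{RN}/C}$, which holds once $R=R(T,\bC_\star)$ is chosen large enough. The remaining estimates are routine.
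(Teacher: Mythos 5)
Your proof is correct and follows the same Gr\"onwall strategy as the paper for the deterministic bound~\eqref{eq:x-t-bound-localized}, but it differs in mechanics (you subtract the martingale and study $\bZ_t=\bX_t-\bM_t$ as a random linear ODE, whereas the paper bounds $\|\bX_t\|^2/N$ directly through Cauchy--Schwarz and then divides out) and, more substantively, you prove the moment bounds~\eqref{eq:x-t-moment-bound} by a genuinely different route.

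The paper obtains~\eqref{eq:x-t-moment-bound} via a layer-cake integration in $R$: with $f(R)=e^{C_0\sqrt R Tk}$ and $Z_{N,\bX}^k\le f(R)$ on $\cL_{N,R}$, it writes $\bE[Z_{N,\bX}^k]=\int_0^\infty f'(R)\,\bP(Z_{N,\bX}^k>f(R))\,dR\le f(R_0)+\int_{R_0}^\infty f'(R)e^{-\sqrt{RN}/C}\,dR$, which is finite once $\sqrt N/C>C_0Tk$. This needs \emph{only} the tail bound $\bP(\cL_{N,R}^c)\le e^{-\sqrt{RN}/C}$ from Lemma~\ref{lem:L-N-R-complement} and requires no a priori moment control. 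Your approach instead splits the expectation over $\cL_{N,R}$ and $\cL_{N,R}^c$ for a fixed large $R$, and to kill the $\cL_{N,R}^c$ contribution via Cauchy--Schwarz you establish (uniformly for $N$ large) exponential moments $\bE[e^{q\kappa T}]$ of $\kappa=\|\bJ\|_{2\to2}+C_\bLambda$, invoking Lata{\l}a plus the Lipschitz concentration of $\|\bA\|_{2\to2}$. This works---the exponential concentration gives $\bP(\|\bJ\|_{2\to2}>C+s)\le Ce^{-\sqrt N s/C}$, so $\bE[e^{q\|\bJ\|_{2\to2}T}]$ is uniformly bounded once $qT<\sqrt N/C$---and in fact, as you note, the resulting $O(1)$ crude bound already yields~\eqref{eq:x-t-moment-bound} directly. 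The cost is that you re-derive tail/moment control on $\|\bJ\|_{2\to2}$ that the paper's layer-cake argument sidesteps; the benefit is that your version makes explicit the $L^q$-boundedness of the Gr\"onwall prefactor $e^{\kappa T}$, which is conceptually transparent. Both arguments need $N\ge N_0(k,T,\bC_\star)$, consistent with the $\limsup_N$ formulation. The remaining steps for $\bG_t$ (via $\|\bG_t\|\le\|\bJ\|_{2\to2}\|\bX_t\|$) and for $F$ (two applications of Cauchy--Schwarz) match the paper.
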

\begin{proof}
Setting $e_N(t) = \frac{1}{\sqrt N}\|\bX_t\|$, we get upon expanding \eqref{eq:sde-def}, that 
\begin{align*}
    (e_N(t))^2 &\leq \frac{1}{N} \sum_{j \le N} |X_j(t)|\Big(|X_j(0)|+|M_j(t)|+ \int_0^t |h_j| ds
    +\int_0^t |G_j(\bX_s)|ds+\int_0^t |\Lambda_{j} (\bX_s)| ds \Big) \\
    &=: I_1+I_2+I_3+I_4+I_5 \,.
\end{align*}
From the definition of the $2$-to-$2$ norm, evidently
\begin{equation}\label{eq:G-to-X}
\|\bG_s\| = 
\sqrt{ \sum_{j \le N} G_j(\bX_s)^2 }
\leq \|\bJ\|_{2\to 2} \|\bX_s\|\,, \qquad 
\sqrt{ \sum_{j \le N} \Lambda_{j} (\bX_s)^2 }
 \leq \|\bLambda\|_{2\to 2} \|\bX_s\| 
  \,.
\end{equation}
Hence, by Cauchy--Schwarz, 
\begin{align*}
    I_1 &
    \leq e_N(t) \frac{1}{\sqrt N}\|\bX_0\|\,, \qquad 
    I_2 
    \leq e_N(t) \frac{1}{\sqrt N}\|\bM_t\|\,, \qquad
    I_3
    \leq e_N(t) \, C_\bh \, T \,, \\
    I_4 
    & 
    \leq e_N(t) \frac{1}{\sqrt{N}} \int_0^t \|\bG_s\|ds \leq e_N(t) \|\bJ\|_{2\to 2} \int_0^t e_N(s)ds\,,\\
    I_5 
     & 
    \leq e_N(t) \frac{1}{\sqrt{N}} \int_0^t \Big(\sum_{j\le N} |\Lambda_j (\bX_s)|^2\Big)^{1/2} ds
     \leq e_N(t) C_\bLambda \int_0^t e_N(s)ds\,,
    \end{align*}
where in the last inequality we rely on our assumption
that
$\|\bLambda\|_{1\to 1} \le C_\bLambda$ and 
$\|\bLambda\|_{\infty \to \infty} \le C_\bLambda$, to deduce that $\|\bLambda\|_{2 \to 2} \le C_\bLambda$.
Combining these bounds on $(I_i)_{i\le 5}$, and dividing out by $e_N(t)$, we see that 
\begin{align*}
    e_N(t)\leq \frac 1{\sqrt N} \Big[\|\bX_0\| + \|\bM_t\|\Big]+  C_\bh T +
    (\|\bJ\|_{2\to2} + C_\bLambda) 
    \int_0^t  e_N(s)ds\,.
\end{align*}
By Gronwall's inequality, using the localization to $\cL_{N,R}$, it then follows that for any $t \in [0,T]$,
\begin{align*}
    e_N(t) \leq (\sqrt R + C_\bh T) \exp\big((\sqrt R+C_\bLambda) t\big),
\end{align*}
yielding the \abbr{lhs} of \eqref{eq:x-t-bound-localized} as soon as
$R \ge R_0(T,{\bC}_\star) \ge 1$. From the \abbr{lhs} of \eqref{eq:G-to-X} we know that 
$\|\bG_t\| \le \sqrt{R} \, \|\bX_t\|$ throughout $\cL_{N,R}$, hence after suitably increasing 
$C_0$ and $R_0$, the \abbr{rhs} of \eqref{eq:x-t-bound-localized} holds as well.

To deduce the uniformly bounded moment estimate of \eqref{eq:x-t-moment-bound} for 
$\bX_t$, recall first from the \abbr{lhs} of \eqref{eq:x-t-bound-localized} that 
\[
Z_{N,\bX}^k := \big( \sup_{t \leq T} e_N(t) \big)^k \le e^{C_0 \sqrt{R} T k} =: f(R)\,, \qquad \forall R \ge R_0 \,,
\; (\bX_0,\bJ,\bM) \in \cL_{N,R} \,.
\]
Combining the latter bound with that of Lemma~\ref{lem:L-N-R-complement}, we arrive at
\begin{align}\label{eq:int-ZN}
\bE [Z_{N,\bX}^k] 
&= \int_0^\infty f'(R) \bP(Z_{N,\bX}^k > f(R)) dR \\
& \le f(R_0) + \int_{R_0}^\infty f'(R) \bP(\cL_{N,R}^c) d  R 
\le f(R_0) + \int_{R_0}^{\infty} f'(R) e^{-\sqrt{RN}/C} dR \,.
\nonumber
\end{align}
The \abbr{rhs} decreases in $N$ and as $f'(R)=(C_0 T k)/(2 \sqrt{R}) f(R)$, it is finite for
$\sqrt{N}/C > C_0 T k$, yielding the \abbr{lhs} of \eqref{eq:x-t-moment-bound}. The \abbr{rhs} of
\eqref{eq:x-t-moment-bound} follows by applying the same reasoning to 
$Z_{N,\bG}^k =\big( N^{-1/2} \sup_{t \in [0,T]} \|\bG_t\| \big)^k$ while utilizing the \abbr{rhs}
of \eqref{eq:x-t-bound-localized}.

Turning to \eqref{eq:bd-F-sup}, note that for any $k \ge 1$ and $F(\bt)$ of 
\eqref{eq:possible-observables} with $\|\ba\|_\infty \le C_\ba$,  by Cauchy--Schwarz, 
 \[
|F(\bt)|^k \le C_\ba^k \, \sqrt{Z_{N,\cY}^{2k}} \sqrt{Z_{N,\cY'}^{2k}} \,,   \quad \mbox{ where } \quad 
Z_{N,\cY}^{2k} := \big( \frac{1}{\sqrt{N}} \sup_{t \in [0,T]} \{ \|\cY(t)\| \} \big)^{2k}\,. 
\]
Thus, yet another application of  Cauchy--Schwarz results with 
\[
 \bE\big[  \sup_{\bt\in [0,T]^2} |F(\bt)|^k \big] \le C_\ba^k \, \sqrt{ \bE[Z_{N,\cY}^{2k}]} \sqrt{\bE[Z_{N,\cY'}^{2k}]} \le C_\ba^k \max_{\cY\in {\mathfrak F}} \bE[Z_{N,\cY}^{2k}] \,.
\]
If $\cY$ is $\one$, this latter expectation is simply 1. If $\cY$ is $\bM$, using the tail bound of~\eqref{eq:bd-M-sup} in combination with~\eqref{eq:int-ZN} (now for $f(R) = (R/3)^k$), the latter expectation is uniformly bounded in $N$. Lastly if $\cY$ is from $\{\bX,\bG\}$, the expectation above is uniformly bounded in $N$ by~\eqref{eq:x-t-moment-bound}. Combining these yields the desired~\eqref{eq:bd-F-sup}.
\end{proof}

\subsection{A Lipschitz estimate on quadratic observables}\label{subsec:lipschitz}
Our next proposition shows that on 
$\cL_{N,R}$ all $F(\bt)$ of the form~\eqref{eq:possible-observables} are $O(N^{-1/2})$-Lipschitz in the $(\bX_0, \bJ, \bM)$ endowed with the following mixed $2$-norm
on $\cE_N$,
\begin{align}\label{def:BDG-norm}
    \|(\bX_0, \bJ, \bM)\|_{\textsc{mix}}^2 := \|\bX_0\|^2 + N \sum_{1\le i ,j\le N} J_{ij}^2  + \sup_{t\in [0,T]} \|\bM_t\|^2 
\end{align}
(which is taken from \cite[Hypothesis 1.1]{BADG06}).
\begin{prop}\label{prop:main-lipschitz}
Fixing $\ba$ such that $\|\ba\|_\infty \le C_\ba$ and $\cY, \cY'\in {\mathfrak F}$, denote by $F(\bt; ( \bX_0, \bJ,\bM))$ the observable in~\eqref{eq:possible-observables} evaluated on the trajectory $\bX_t$ constructed out of the triplet $(\bX_0, \bJ, \bM)$. There exist $R_0(T,C_\ba,  \bC_\star)$ and $C(T, C_\ba, \bC_{\star})$ such that for any $R\ge R_0$ all $N$ and $(\bX_0,\bJ,\bM), (\bX'_0,\bJ',\bM')$ in $\cL_{N,R}$
\[
\sup_{\bt\in [0,T]^2}|F (\bt  ; (\bX_0, \bJ,\bM))- F (\bt  ; (\bX'_0, \bJ',\bM'))| 
\leq \frac{Ce^{ C\sqrt R}}{\sqrt N}\|(\bX_0, \bJ, \bM)-(\bX_0',\bJ',\bM')\|_{\textsc{mix}}\,.
\]
\end{prop}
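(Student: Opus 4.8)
The plan is to reduce the Lipschitz bound for $F$ to a \emph{pathwise} Lipschitz bound on the building blocks $t\mapsto \cY(t)$, $\cY\in{\mathfrak F}$, and then obtain the latter from Gr\"onwall's inequality applied to the integral form of \eqref{eq:sde-def}, which is \emph{linear} once $\bSigma$ is constant. Throughout write $\bX_t,\bX_t'$ for the two trajectories, $\bG_t=\bJ^T\bX_t$, $\bG_t'=(\bJ')^T\bX_t'$, and abbreviate $D:=\|(\bX_0,\bJ,\bM)-(\bX_0',\bJ',\bM')\|_{\textsc{mix}}$; from \eqref{def:BDG-norm} one reads off immediately that $\|\bX_0-\bX_0'\|\le D$, $\sup_{t\le T}\|\bM_t-\bM_t'\|\le D$, and $\|\bJ-\bJ'\|_{2\to2}\le\|\bJ-\bJ'\|_F\le D/\sqrt N$.

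\emph{Reduction to the trajectory.} Since $F(\bt;\cdot)=\tfrac1N\sum_i a_i\,\cY_i(t_1)\cY'_i(t_2)$ with $\|\ba\|_\infty\le C_\ba$, telescoping the product $\cY_i(t_1)\cY'_i(t_2)$ between the unprimed and primed trajectories and applying Cauchy--Schwarz in $i$ bounds $|F(\bt;(\bX_0,\bJ,\bM))-F(\bt;(\bX_0',\bJ',\bM'))|$ by
\[
\frac{C_\ba}{N}\Big(\big\|\cY(t_1)-\cY(t_1)^{\mathrm{pr}}\big\|\,\big\|\cY'(t_2)\big\|+\big\|\cY(t_1)^{\mathrm{pr}}\big\|\,\big\|\cY'(t_2)-\cY'(t_2)^{\mathrm{pr}}\big\|\Big),
\]
a superscript $\mathrm{pr}$ denoting evaluation on the primed trajectory. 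On $\cL_{N,R}$, every building block of ${\mathfrak F}$ evaluated on \emph{either} trajectory has Euclidean norm at most $C\sqrt N\,e^{C\sqrt R}$ (trivially for $\one_t$, by \eqref{def:L-N-R} for $\bM_t$, and by \eqref{eq:x-t-bound-localized} of Proposition~\ref{prop:X-t-localization} for $\bX_t,\bG_t$; the primed trajectory qualifies since $(\bX_0',\bJ',\bM')\in\cL_{N,R}$). Hence it suffices to prove
\[
\sup_{t\le T}\big\|\cY(t)-\cY(t)^{\mathrm{pr}}\big\|\le C\,e^{C\sqrt R}\,D\qquad\text{for every }\cY\in{\mathfrak F},
\]
since inserting this yields the claimed $\tfrac{C e^{C\sqrt R}}{\sqrt N}D$ bound after taking $\sup_{\bt\in[0,T]^2}$.

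\emph{Pathwise Lipschitz bound.} For $\cY=\one_t$ the difference vanishes, and for $\cY=\bM_t$ it is at most $D$. For $\cY=\bX_t$, subtracting the integral versions of \eqref{eq:sde-def} for the two triplets and using the splitting $\bJ^T\bX_s-(\bJ')^T\bX_s'=\bJ^T(\bX_s-\bX_s')+(\bJ-\bJ')^T\bX_s'$ (with $\bLambda,\bh$ common to both),
\[
\bX_t-\bX_t'=(\bX_0-\bX_0')+(\bM_t-\bM_t')+\int_0^t(\bJ+\bLambda)^T(\bX_s-\bX_s')\,ds+\int_0^t(\bJ-\bJ')^T\bX_s'\,ds.
\]
On $\cL_{N,R}$ one has $\|\bJ\|_{2\to2}\le\sqrt R$, $\|\bLambda\|_{2\to2}\le C_\bLambda$, $\|\bX_s'\|\le\sqrt N\,e^{C_0\sqrt R T}$ by \eqref{eq:x-t-bound-localized}, and $\|\bJ-\bJ'\|_{2\to2}\le D/\sqrt N$, so
\[
\|\bX_t-\bX_t'\|\le 2D+TD\,e^{C_0\sqrt R T}+(\sqrt R+C_\bLambda)\int_0^t\|\bX_s-\bX_s'\|\,ds,
\]
and Gr\"onwall gives $\sup_{t\le T}\|\bX_t-\bX_t'\|\le C\,e^{C\sqrt R}D$ for a suitable $C(T,\bC_\star)$. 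For $\cY=\bG_t=\bJ^T\bX_t$ one applies the splitting once more, $\bG_t-\bG_t'=\bJ^T(\bX_t-\bX_t')+(\bJ-\bJ')^T\bX_t'$, and combines the bound just obtained with $\|\bX_t'\|\le\sqrt N\,e^{C_0\sqrt R T}$ to get $\sup_{t\le T}\|\bG_t-\bG_t'\|\le C\,e^{C\sqrt R}D$. This establishes the display above for all $\cY\in{\mathfrak F}$, and with the reduction step completes the proof.

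\emph{Main obstacle.} The only delicate point — and the origin of the $N^{-1/2}$ gain — is the bookkeeping of $N$-dependence in the Gr\"onwall step: the perturbation $\bJ-\bJ'$ enters the integral equation only through the source term $(\bJ-\bJ')^T\bX_s'$, whose operator norm is bounded by $(D/\sqrt N)\cdot\sqrt N\,e^{C_0\sqrt R T}$, so the factors of $\sqrt N$ cancel and the resulting Gr\"onwall constant is $N$-independent; the final $N^{-1/2}$ then comes purely from the $1/N$ normalization of $F$ played off against the $O(\sqrt N)$ Euclidean norms of the building blocks. One must only be careful that every estimate invoked is one of the a priori localization bounds of Proposition~\ref{prop:X-t-localization}, which are available for \emph{both} triplets precisely because both lie in $\cL_{N,R}$.
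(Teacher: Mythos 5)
Your proof is correct and follows essentially the same route as the paper: reduce the $F$-difference to the pathwise difference of building blocks via telescoping/Cauchy--Schwarz plus the a priori $\cL_{N,R}$ bounds of Proposition~\ref{prop:X-t-localization}, then get the pathwise Lipschitz bound from the integral form of the \abbr{SDE}, the splitting $\bJ^T\bX_s-(\bJ')^T\bX'_s=\bJ^T(\bX_s-\bX'_s)+(\bJ-\bJ')^T\bX'_s$, and Gr\"onwall. The only (cosmetic) difference is that the paper's Lemma~\ref{lem:X-t-Lipschitz} works with $e_N(t)^2$, pulls a factor of $e_N(t)$ out of each term by Cauchy--Schwarz, and divides, whereas you take the Euclidean norm of the vector-valued integral equation directly and apply the triangle inequality; the two manipulations lead to the identical Gr\"onwall inequality and constants.
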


The key to Proposition~\ref{prop:main-lipschitz} is to show that
$\bX_t$ is $O(1)$-Lipschitz on $\cL_{N,R}$ endowed with $\|\cdot \|_{\textsc{mix}}$. 
Specifically, denoting by $\bX_t(\bX_0, \bJ, \bM)$ the solution to~\eqref{eq:sde-def}, constructed from the triplet $(\bX_0, \bJ, \bM)$ and $\bX_t'(\bX_0, \bJ, \bM)$ the solution constructed from the triplet $(\bX_0', \bJ', \bM')$, our next lemma establishes a uniform over $\cL_{N,R}$ Lipschitz bound on $\|\bX_t-\bX'_t\|$.
\begin{lem}\label{lem:X-t-Lipschitz}
 There exist $R_0(T, \bC_\star),C(T,\bC_\star)$ such that for all $R\geq R_0$ and  $(\bX_0, \bJ, \bM),( \bX'_0 ,  {\bJ}',  \bM') \in \cL_{N,R}$, 
\[
  \sup_{t\in [0,T]} \big\|\bX_t (\bX_0, \bJ, \bM)  - \bX'_t ( \bX'_0, \bJ',\bM')\big\|  \leq  
  e^{ C\sqrt R} \big\|(\bX_0, \bJ, \bM)-( \bX'_0 ,  {\bJ}',  \bM')\big\|_{\textsc{mix}}\,.
\]
\end{lem}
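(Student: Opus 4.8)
The plan is to run a Gronwall argument on $\|\bX_t-\bX'_t\|$, taking advantage of the fact that (with $\bSigma$ constant) $\bM_t$ enters~\eqref{eq:sde-def} only additively. Writing the \abbr{SDS}~\eqref{eq:sde-def} in integral (mild) form, for the trajectory built from $(\bX_0,\bJ,\bM)$ one has
\[
\bX_t = \bX_0 + \int_0^t (\bJ+\bLambda)^T \bX_s\, ds + t\, \bh + \bM_t\,,
\]
and likewise for $\bX'_t$ with $(\bX'_0,\bJ',\bM')$ (the deterministic data $\bLambda,\bh$ being common to both). Subtracting, the constant drift $t\,\bh$ cancels and
\[
\bX_t - \bX'_t = (\bX_0-\bX'_0) + (\bM_t - \bM'_t) + \int_0^t (\bJ-\bJ')^T \bX_s\, ds + \int_0^t (\bJ'+\bLambda)^T(\bX_s-\bX'_s)\, ds\,,
\]
using the algebraic identity $(\bJ+\bLambda)^T\bX_s - (\bJ'+\bLambda)^T\bX'_s = (\bJ-\bJ')^T\bX_s + (\bJ'+\bLambda)^T(\bX_s-\bX'_s)$.

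First I would bound the two ``non-integral'' terms by $\|(\bX_0,\bJ,\bM)-(\bX'_0,\bJ',\bM')\|_{\textsc{mix}}$ directly from the definition~\eqref{def:BDG-norm}. For the third term, I would use $\|(\bJ-\bJ')^T\bX_s\| \le \|\bJ-\bJ'\|_{2\to 2}\|\bX_s\| \le \|\bJ-\bJ'\|_{F}\|\bX_s\|$ together with the localization bound $\|\bX_s\| \le \sqrt N\, e^{C_0\sqrt R T}$ from Proposition~\ref{prop:X-t-localization} (valid on $\cL_{N,R}$ for $R\ge R_0(T,\bC_\star)$). The crucial point — and the one place the structure of $\|\cdot\|_{\textsc{mix}}$ matters — is that $\|\bJ-\bJ'\|_F = N^{-1/2}\bigl(N\sum_{i,j}(J_{ij}-J'_{ij})^2\bigr)^{1/2} \le N^{-1/2}\|(\bX_0,\bJ,\bM)-(\bX'_0,\bJ',\bM')\|_{\textsc{mix}}$, so the explicit factor of $N$ in~\eqref{def:BDG-norm} exactly absorbs the $\sqrt N$ growth of $\|\bX_s\|$ and this term contributes at most $T\,e^{C_0\sqrt R T}\,\|(\bX_0,\bJ,\bM)-(\bX'_0,\bJ',\bM')\|_{\textsc{mix}}$. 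For the fourth term I would use $\|(\bJ'+\bLambda)^T(\bX_s-\bX'_s)\| \le (\|\bJ'\|_{2\to 2}+\|\bLambda\|_{2\to 2})\|\bX_s-\bX'_s\| \le (\sqrt R + C_\bLambda)\|\bX_s-\bX'_s\|$, again using the localization $N\|\bJ'\|_{2\to2}^2\le RN$ on $\cL_{N,R}$ and $\|\bLambda\|_{2\to2}\le C_\bLambda$.

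Assembling these, one gets a bound of the form $\|\bX_t-\bX'_t\| \le A\,\|(\bX_0,\bJ,\bM)-(\bX'_0,\bJ',\bM')\|_{\textsc{mix}} + (\sqrt R + C_\bLambda)\int_0^t \|\bX_s-\bX'_s\|\,ds$ with $A = 2 + T e^{C_0\sqrt R T}$, and Gronwall's inequality then yields $\sup_{t\in[0,T]}\|\bX_t-\bX'_t\| \le A\,e^{(\sqrt R + C_\bLambda)T}\,\|(\bX_0,\bJ,\bM)-(\bX'_0,\bJ',\bM')\|_{\textsc{mix}}$, which is at most $e^{C\sqrt R}\,\|(\bX_0,\bJ,\bM)-(\bX'_0,\bJ',\bM')\|_{\textsc{mix}}$ for a suitable $C(T,\bC_\star)$ and all $R\ge R_0(T,\bC_\star)$. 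I do not anticipate a genuine obstacle here: the only thing to be careful about is the bookkeeping of the $N$-scaling (ensuring the Lipschitz constant stays dimension-free, up to the $e^{C\sqrt R}$ factor coming from the localization radius), and keeping track that all norm bounds on $\bX_s$, $\bX'_s$, $\bJ$, $\bJ'$ used are the ones guaranteed by membership of \emph{both} triplets in $\cL_{N,R}$.
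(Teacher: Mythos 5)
Your proposal is correct and follows essentially the same route as the paper's proof: subtract the integral forms, split the cross term $\bJ^T\bX_s-\bJ'^T\bX'_s$ into a ``difference of matrices'' piece and a ``difference of trajectories'' piece, bound the former using the localization $\|\bX_\cdot\|\le\sqrt N e^{C_0\sqrt R T}$ on $\cL_{N,R}$ together with $\|\bJ-\bJ'\|_{2\to2}\le\|\bJ-\bJ'\|_F$, bound the latter by $(\sqrt R+C_\bLambda)\|\bX_s-\bX'_s\|$, and close with Gronwall. The only differences are cosmetic: the paper bounds $e_N(t)^2$ via a Cauchy--Schwarz expansion over coordinates $j$ and then divides out $e_N(t)$, whereas you bound $\|\bX_t-\bX'_t\|$ directly from the vector integral equation; and the paper uses the symmetric choice $\bJ^T(\bX_s-\bX'_s)+(\bJ-\bJ')^T\bX'_s$ while you use $(\bJ-\bJ')^T\bX_s+(\bJ'+\bLambda)^T(\bX_s-\bX'_s)$ — both valid since both triplets lie in $\cL_{N,R}$.
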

\begin{proof} Following the strategy of proof of \cite[Lemma 2.6]{BADG06}, we let 
\begin{align*}
    e_N(t) : = \frac 1{\sqrt N} \|\bX_t(\bX_0 ,\bJ, \bM) - \bX_t'(\bX_0',\bJ', \bM')\|\,,
\end{align*}
and expanding over $j \le N$, we have by the definition of the solution $\bX_t$ for the \abbr{sds}
\eqref{eq:sde-def}--\eqref{eq:martingale-def}, that
\begin{align*}
e_N(t)^2  \le & \frac{1}{N}\sum_{j\le N}  |X_j(t) - X'_j(t)|\Big(  |X_j(0)- X'_j(0)|+ |M_j(t) - M'_j(t)| 
+ \int_0^t |\Lambda_j(\bX_s)  - \Lambda_j(\bX'_s)|ds \\
 & \qquad\qquad\qquad\qquad\qquad\qquad
+ \int_0^t |G_j(\bX_s) - G_j(\bX'_s)| ds 
 + \int_0^t|G_j(\bX'_s) - G'_j(\bX'_s)|ds 
  \Big)   \\
  & = : I_1 + I_2 + I_3 + I_4 + I_5 \,,
\end{align*}
where $\bG'(\cdot)$ is defined as $\bG(\cdot)$ but constructed using $\bJ'$ instead of $\bJ$. 
By Cauchy--Schwarz, 
\begin{align*}
I_1 \le e_N(t) \frac{1}{\sqrt N}\|\bX_0 - \bX'_0\|\,, \qquad \mbox{and} \qquad I_2 \le e_N(t)\frac{1}{\sqrt N} \|\bM_t - \bM'_t\|\,.
\end{align*}
Recalling~\eqref{eq:G-to-X}, we similarly find that  
\begin{align*}
I_3 & \le e_N(t) \frac{1}{\sqrt N} \int_{0}^{t} \Big(\sum_{j\le N} | \Lambda_j(\bX_s - \bX'_s)|^2\Big)^{1/2} ds \le e_N(t) C_\bLambda \int_0^t e_N(s)ds\,.
\end{align*}
Turning to the  terms involving $\bG(\cdot)$ or $\bG'(\cdot)$, observe first that 
\begin{align}
    \|\bG(\bX_t) - \bG(\bX'_t) \| &  \leq \|\bJ\|_{2\to 2} \|\bX_t - \bX'_t\|]\,,\qquad \mbox{and}\qquad \|\bG(\bX_t) - \bG'(\bX_t)\| \leq    \|\bJ - \bJ'\|_{2 \to 2}  \|\bX_t\| \,.  \label{eq:G-Lipschitz}
\end{align}
Using the localization to $\cL_{N,R}$, we thus find that 
\begin{align*}
I_4  \le e_N(t) \frac{1}{\sqrt N} \int_0^t \|\bG(\bX_s) - \bG(\bX'_s)\| ds &\le e_N(t)\|\bJ\|_{2\to 2}
\int_0^t   e_N(s)ds 
 \le e_N(t) \sqrt R \int_{0}^t e_N(s) \,, \\
I_5  \le e_N(t) \frac{1}{\sqrt N} \int_0^t \|\bG(\bX'_s) - \bG'(\bX'_s)\| ds 
&\le e_N(t) \, \|\bJ - \bJ'\|_{2\to 2} \, \frac{1}{\sqrt N} \int_0^t  \|\bX'_s\| ds \\
& \le e_N(t) \, \|\bJ - \bJ'\|_{2\to 2} \, T e^{C_0 \sqrt R T} \,,
\end{align*}
where in the last inequality we further assumed $R \ge R_0(T,{\bC}_\star)$, utilizing 
the \abbr{lhs} of \eqref{eq:x-t-bound-localized}. Further increasing $R_0$ such that 
$T e^{C_0 \sqrt R_0 T} \ge 1$, upon combining the bounds on $(I_i)_{i\le 5}$, and 
dividing out by $e_N(t)$, we see that 
\begin{align*}
    e_N(t)\leq \frac {T e^{C_0 \sqrt R T}}
    {\sqrt N} \Big[\|\bX_0 - \bX'_0\| + \sqrt{N}  
    \|\bJ - \bJ'\|_{2 \to 2} + \sup_{t\le T} \|\bM_t - \bM_t'\|\Big] 
    + \Big[ C_\bLambda  +  \sqrt R \Big]\int_0^t e_N(s)ds\,. 
\end{align*}
Recall that $\|\bJ\|^2_{2 \to 2} \le \sum_{ij} J_{ij}^2$, so by Gronwall's inequality,  
there exist $C(T,\bC_\star)$, such that
\begin{align*}
    e_N(t) \leq \frac{e^{C \sqrt{R}}}{\sqrt N} 
    \|(\bX_0 , \bJ,\bM) - (\bX'_0 ,\bJ',\bM')\|_{\textsc{mix}}\,,
\end{align*}
for any $R\ge R_0$, every $N$ and all $t\in [0,T]$, as claimed. 
\end{proof}

\begin{proof}[\textbf{\emph{Proof of Proposition~\ref{prop:main-lipschitz}}}]
Fix $\cY^1, \cY^2 \in {\mathfrak F}$, $\ba$ such that $\|\ba\|_\infty\leq C_\ba$ and $\bt = (t_1,t_2)\in [0,T]^2$.
Equipped with Lemma \ref{lem:X-t-Lipschitz} and \eqref{eq:G-Lipschitz} it remains to 
establish a Lipschitz control on differences of $F(\bt; (\bX_0,\bJ,\bM))$
in terms of differences of $\|\bG_t\|$, $\|\bX_t\|$ and $\|\bM_t\|$ corresponding to 
any pair of triplets $(\bX_0,\bJ,\bM)$ and $(\bX'_0,\bJ', \bM')$ in $\cL_{N,R}$. To this end, 
we start with the following bound on differences of $F(\bt; \cdot)$:  
\begin{align*}
     \big| F(\bt; (\bX_0, \bJ, \bM)) - F(\bt; (\bX'_0, \bJ' , \bM'))\big| & 
 \le   \frac{C_\ba}{N} \sum_{i\le N} \big|\cY^1_i(\bX_{t_1}) - \cY^1_i(\bX'_{t_1}) \big|  \big|\cY^2_i(\bX_{t_2})\big|  \\ 
    & \qquad  + \frac {C_\ba} {N} \sum_{i\le N} \big| \cY^1_i(\bX'_{t_1}) \big|\big|\cY^2_i(\bX_{t_2})- \cY^2_i(\bX'_{t_2})\big|\,.  
\end{align*}
Since the two terms on the \abbr{RHS} can be bounded symmetrically, \abbr{wlog} we focus on 
the first one, which by Cauchy--Schwarz, is at most 
\begin{align}\label{eq:bd-F-diff}
C_\ba \sup_{\cY \in {\mathfrak F}, t \in [0,T]} \big\{ \frac{1}{\sqrt{N}}\|\cY(\bX_{t})- \cY(\bX'_{t})\| \big\}
 \sup_{\cY\in {\mathfrak F}, t \in [0,T]}  \big\{ \frac{1}{\sqrt{N}} \|\cY(\bX_{t})\| \big\} \,,
\end{align}
where, as before, $\bX_t'$ is constructed out of the triplet $(\bX'_0 , \bJ', \bM')$. 
Now recall from $(\bX_0, \bJ, \bM)\in \cL_{N,R}$ and Proposition~\ref{prop:X-t-localization}, that 
the right-most term in \eqref{eq:bd-F-diff} is at most $\exp(C_0 \sqrt R T)$ for all $R \ge R_0$, in which 
case by the preceding 
\begin{align}\label{eq:bd-F-diff-LNR}
    \sup_{\bt \in [0,T]^2}\big| F(\bt; (\bX_0, \bJ, \bM)) - F(\bt; (\bX_0', \bJ ' , \bM'))\big| \leq 
    \frac{2 C_\ba e^{C_0 \sqrt R T}}{\sqrt N}\sup_{\cY \in {\mathfrak F}, t\in [0,T]} \|\cY (\bX_t) - \cY(\bX'_t)\|\,.
\end{align}
Recall Lemma~\ref{lem:X-t-Lipschitz} and~\eqref{eq:G-Lipschitz}, to deduce that for some 
$C(T,\bC_\star)>0$, every $R\ge R_0$, and all $(\bX_0, \bJ, \bM)$, we have
$(\bX'_0, \bJ', \bM')\in \cL_{N,R}$, 
\begin{align*}
    \sup_{\cY \in {\mathfrak F}, t\in [0,T]} \|\cY (\bX_t) - \cY(\bX'_t)\| \leq 
    \sqrt{R} e^{C\sqrt R} \|( \bX_0 , \bJ, \bM) - (\bX'_0,\bJ', \bM')\|_{\textsc{mix}}\,.
\end{align*}
Putting these all together, we deduce that there exists some other  $R_0 (T, \bC_\star)$ and $C(T,C_\ba,\bC_\star)$,
such that for all $R\ge R_0(T, \bC_{\star})$,
\begin{align*}
  \sup_{\substack{(\bX_0, \bJ, \bM),(\bX'_0, \bJ', \bM')\in \cL_{N,R} \\ \bt\in [0,T]^2}}
 \!\! \big| F(\bt; 
   (\bX_0, \, \bJ, \bM)) -   F(\bt; (\bX'_0, \bJ ' , \bM'))\big|  
  \leq \frac{Ce^{C \sqrt R}}{\sqrt N} \|( \bX_0 , \bJ, \bM) - (\bX'_0,\bJ', \bM')\|_{\textsc{mix}}\,.
  \quad \quad \qedhere
\end{align*}
\end{proof}

We conclude this subsection by combining the respective exponential concentrations of 
Lipschitz functions due to $\mu$, $\bP_\bA$ and~$\bP_\bB$. 
\begin{lem}\label{lem:lifting-Lipschitz}
Suppose that $\mu, \bP_{\bA}$ satisfy Hypothesis~\ref{hyp:2}. Then $\bP =\mu \otimes \bP_\bA \otimes \bP_\bB$ satisfies exponential concentration of Lipschitz functions with respect to $(\cE_N, \|\cdot \|_{\textsc {mix}})$. 
\end{lem}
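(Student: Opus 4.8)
The plan is to establish the stated exponential concentration by peeling off the three independent blocks $\bX_0$, $\bJ$ and $\bM$ one at a time, exploiting the fact that along each block the mixed norm $\|\cdot\|_{\textsc{mix}}$ of \eqref{def:BDG-norm} collapses to a Euclidean (or uniform) norm, so that all the resulting partial maps are again $1$-Lipschitz. Fix a $1$-Lipschitz $f:(\cE_N,\|\cdot\|_{\textsc{mix}})\to\bR$; the bound $|f(\bX_0,\bJ,\bM)|\le|f(0,0,0)|+\|(\bX_0,\bJ,\bM)\|_{\textsc{mix}}$ together with the finite moments of $\mu$, $\bP_\bA$ and $\bP_\bB$ makes all the conditional expectations below well defined and finite.

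First I would integrate out $\bX_0$: for fixed $(\bJ,\bM)$ one has $\|(\bX_0,\bJ,\bM)-(\bX_0',\bJ,\bM)\|_{\textsc{mix}}=\|\bX_0-\bX_0'\|$, so $\bX_0\mapsto f(\bX_0,\bJ,\bM)$ is $1$-Lipschitz on $(\bR^N,\|\cdot\|)$ and Hypothesis~\ref{hyp:2} for $\mu$ gives $\bP(|f-g_1|>\lambda)\le C_\mu e^{-\lambda/C_\mu}$, where $g_1(\bJ,\bM):=\bE_\mu[f(\bX_0,\bJ,\bM)]$. Next, since $\bJ=N^{-1/2}\bA$, one has $\|(\bX_0,\bJ,\bM)-(\bX_0,\bJ',\bM)\|_{\textsc{mix}}=\sqrt N\,\|\bJ-\bJ'\|_{F}=\|\bA-\bA'\|_{F}$, so $\bA\mapsto g_1$ is $1$-Lipschitz in the Euclidean norm on $\bR^{N^2}$ (resp.\ $\bR^{N(N+1)/2}$), and Hypothesis~\ref{hyp:2} for $\bP_\bA$ gives $\bP(|g_1-g_2|>\lambda)\le C_\bA e^{-\lambda/C_\bA}$ with $g_2(\bM):=\bE_{\bP_\bA}[g_1(\bJ,\bM)]$. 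Finally, $\|(\bX_0,\bJ,\bM)-(\bX_0,\bJ,\bM')\|_{\textsc{mix}}=\sup_{t\le T}\|\bM_t-\bM_t'\|$, so $\bM\mapsto g_2(\bM)$ is $1$-Lipschitz on $C([0,T],\bR^N)$ with its uniform norm, and I would show (see next paragraph) that $\bP_\bB$ satisfies exponential concentration there, giving $\bP(|g_2-\bE f|>\lambda)\le C_\bB e^{-\lambda/C_\bB}$ for some $C_\bB(T,C_\bsigma)$, using Fubini to identify $\bE_\bB[g_2]=\bE f$. A union bound over the three events then yields $\bP(|f-\bE f|>3\lambda)\le C_\mu e^{-\lambda/C_\mu}+C_\bA e^{-\lambda/C_\bA}+C_\bB e^{-\lambda/C_\bB}\le Ce^{-\lambda/C}$ for $C=3\max\{C_\mu,C_\bA,C_\bB\}$, which after rescaling $\lambda$ is the claim.

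The only step not immediate from the hypotheses is the concentration of the driving noise, since Hypothesis~\ref{hyp:2} is assumed only for $\mu$ and $\bP_\bA$; I therefore expect this to be the main (in fact, only) technical point. Under the standing assumption $\sigma_{ij}=0$ for $i\ne0$, $\bM_\bullet=\sqrt2\,\mathrm{diag}((\sigma_{0j})_j)\bB_\bullet$ is a centered Gaussian process on $[0,T]$ with $\sup_{t\le T}\mathrm{Var}(M_j(t))\le 2C_\bsigma^2T$. The cleanest route is to invoke the classical Gaussian concentration inequality (Borell--TIS) for Lipschitz functionals of a Gaussian measure on the Banach space $(C([0,T],\bR^N),\sup_{t}\|\cdot\|)$: the relevant variance proxy $\sigma_*^2=\sup\{\bE[\xi(\bM)^2]:\xi\in E^*,\ \|\xi\|_*\le1\}$ is bounded, using $s\wedge t\le T$ and $\|\mathrm{diag}((\sigma_{0j})_j)\|\le C_\bsigma$, by $2C_\bsigma^2T$, which is finite and independent of $N$ and even gives the Gaussian tail $2e^{-\lambda^2/(4C_\bsigma^2T)}$. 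Alternatively one can discretize $[0,T]$, control the L\'evy modulus of continuity of $\bB$, and tensorize one-dimensional Wiener concentration over the $N$ coordinates, noting $\sup_t\|\bM_t\|\le(\sum_j\sup_t M_j(t)^2)^{1/2}$ so that a functional $1$-Lipschitz for the left-hand norm is $1$-Lipschitz for the product norm on $C([0,T],\bR)^N$.
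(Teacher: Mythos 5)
Your proposal is correct and follows essentially the same route as the paper: a three-term telescoping (martingale-type) decomposition that peels off the independent blocks $\bX_0$, $\bA$, $\bM$, uses the fact that each conditional map is $1$-Lipschitz in the corresponding Euclidean/uniform marginal of $\|\cdot\|_{\textsc{mix}}$ (with conditional expectations preserving the Lipschitz constant), invokes Hypothesis~\ref{hyp:2} for the $\mu$ and $\bP_\bA$ blocks and Gaussian concentration for the $\bP_\bB$ block, and closes with a union bound. The only cosmetic differences are the order in which you peel off the blocks (the paper goes $\bM$, then $\bJ$, then $\bX_0$) and that you spell out Borell--TIS with an explicit variance proxy where the paper cites \cite{BADG06} for the $\bP_\bB$ concentration.
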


\begin{proof}
Fix any function $f$ that is 1-Lipschitz on $(\cE_N, \|\cdot\|_{\textsc{mix}})$. Let us expand 
\begin{align*}
    f(\bX_0 ,\bJ, \bM) - \bE[f(\bX_0 , \bJ, \bM)] & = (f(\bX_0 , \bJ, \bM) - \bE_{\bB}[f(\bX_0,\bJ,\bM)]) + (\bE_{\bB}\big[f(\bX_0,\bJ,\bM)]- \bE_{\bJ,\bB} [f(\bX_0, \bJ,\bM)]) \\ 
    & \quad +  (\bE_{\bJ, \bB} [f(\bX_0,\bJ,\bM)] - \bE[f(\bX_0, \bJ, \bM)])\,,
\end{align*}
where the subscripts of the expectations indicate which random variables the expectation is taken over. 
Call the above three differences $I_{\bM}, I_{\bJ}$ and $I_{\bX_0}$ say. For every $\bX_0,\bJ$ fixed, $ f({\bX_0, \bJ,\bM})$ is 1-Lipschitz in $\bM \in C([0,T], \bR^N)$ endowed with the norm $\sup_{t\leq T} \|\cdot\|$. As such, from the exponential concentration of Lipschitz functions satisfied by $\bP_{\bB}$ with respect to $C([0,T],\bR^N)$ endowed with $\sup_{t\leq T} \|\cdot\|$ (see e.g., the discussion around ~\cite[Hypothesis~1.1]{BADG06}), there exists $C= C(C_\bsigma)>0$ such that for every $r>0$, 
\begin{align*}
  \sup_{\bX_0,\bJ} \bP_{\bB} \big( |I_\bM| >r/3\big) &  \leq Ce^{ - r/C} \,.
\end{align*}
Similarly, we have that for every fixed $\bX_0$, $\bE_\bB[f(\bX_0,\bJ,\bM)]$ is 1-Lipschitz in $\bJ$ endowed with its rescaled Frobenius norm $\sum_{i,j} (\sqrt N J_{ij})^2$, and finally, $\bE_{\bJ,\bB} [ f(\bX_0,\bJ,\bM)]$ is 1-Lipschitz in $\bX_0$ endowed with its $\ell^2$ norm.  Altogether, expanding
\begin{align*}
    \bP( |f(\bX_0, \bJ, \bM) - & \,\bE[f(\bX_0, \bJ, \bM)]| >r) \\
    &  \leq \bE \big[ \bP_{\bB} (|I_\bM|>r/3 \mid \bX_0,\bJ)\big]+ \bE\big[\bP_{\bJ}\big(|I_{\bJ}|>r/3 \mid \bX_0\big)\big]  + \mu \big(|I_{\bX_0}|>r/3\big) 
\end{align*}
we see that the exponential concentrations for 1-Lipschitz functions of $\mu, \bP_\bA$ and $\bP_{\bB}$ lift to exponential concentration of $\bP$ for functions that are 1-Lipschitz in the triplet $(\bX_0, \bJ,\bM)$ on $(\cE_N, \|\cdot\|_{\textsc{mix}})$. 
\end{proof}

\subsection{Proof of Theorem \ref{thm:conc}}\label{sec:trajectory-universality}
We first prove a concentration estimate for $F$ at a fixed pair of times $\bt \in [0,T]^2$, before extending this to the full trajectory $(F(\bt))_{\bt\in [0,T]^2}$ by bounding the modulus of continuity of $F$. 

\begin{prop}\label{prop:main-concentration}
Suppose $\mu$, $\bP_\bA$ satisfy Hypotheses~\ref{hyp:1}--\ref{hyp:2}. There exist
$C(T,C_\ba, \bC_\star, \Cmu)$ large, such that for any
$F$ as in~\eqref{eq:possible-observables} with  $\|\ba\|_\infty \le C_\ba$, $\cY, \cY'\in {\mathfrak F}$,
all $\bt \in [0,T]^2$, $\lambda>0$ and $N \ge N_0(T,C_\ba,\bC_\star,\Cmu)$,
\begin{align}\label{eq:conc-fixed-bt}
    \bP\big(|F(\bt) - \bE[F(\bt)]| > \lambda\big)\leq q_N(\lambda):= {\begin{cases} C e^{ - \lambda \sqrt N/C} + \lambda^{-1} e^{ - \sqrt N /C} \,, & \lambda\leq C \\ e^{- (\log \lambda) \sqrt{ N}/C} \,, & \lambda>C\end{cases}}\,.
\end{align}
\end{prop}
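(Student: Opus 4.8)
The plan is to transfer the exponential concentration of $\bP$ for $\|\cdot\|_{\textsc{mix}}$-Lipschitz functions (Lemma~\ref{lem:lifting-Lipschitz}) to the observable $F(\bt)$, which is only \emph{locally} Lipschitz: by Proposition~\ref{prop:main-lipschitz}, on the localizing set $\cL_{N,R}$ of~\eqref{def:L-N-R} the map $(\bX_0,\bJ,\bM)\mapsto F(\bt;(\bX_0,\bJ,\bM))$ is $L_R$-Lipschitz with $L_R := C e^{C\sqrt R}/\sqrt N$. First I would fix $R$ (to be optimized at the end as a function of $\lambda,N$) and apply McShane's extension theorem to the restriction of $F(\bt;\cdot)$ to the closed set $\cL_{N,R}\subset\cE_N$: since that restriction is $L_R$-Lipschitz, the function $\tilde F_R(x):=\inf_{y\in\cL_{N,R}}\{F(\bt;y)+L_R\|x-y\|_{\textsc{mix}}\}$ is a globally $L_R$-Lipschitz extension of $F(\bt;\cdot)$ that agrees with $F(\bt;\cdot)$ on $\cL_{N,R}$. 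Lemma~\ref{lem:lifting-Lipschitz} then gives $\bP(|\tilde F_R(\bt)-\bE[\tilde F_R(\bt)]|>u)\le C\exp(-u/(CL_R))$ for all $u>0$.

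Next I would control the mean shift $\delta_R:=|\bE[\tilde F_R(\bt)]-\bE[F(\bt)]|$ introduced by the extension. Since $\tilde F_R=F$ on $\cL_{N,R}$, Cauchy--Schwarz gives $\delta_R\le \|\tilde F_R(\bt)-F(\bt)\|_{L^2}\,\bP(\cL_{N,R}^c)^{1/2}$; here $\bP(\cL_{N,R}^c)^{1/2}\le e^{-\sqrt{RN}/(2C)}$ by Lemma~\ref{lem:L-N-R-complement}, $\bE[F(\bt)^2]=O(1)$ for $N\ge N_0$ by~\eqref{eq:bd-F-sup}, and $\bE[\tilde F_R(\bt)^2]$ is controlled using the linear growth of the $L_R$-Lipschitz function $\tilde F_R$ together with $\sup_{\cL_{N,R}}|F(\bt)|\le C_\ba e^{C\sqrt R}$ (from Proposition~\ref{prop:X-t-localization}) and $\bE[\|(\bX_0,\bJ,\bM)\|_{\textsc{mix}}^2]=O(N^2)$ (using the uniform second moments of $\mu$, the bound $m_{ij}\le\CA$, and Doob's inequality for the scaled Brownian motion $\bM$). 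This yields $\bE[\tilde F_R(\bt)^2]=O(e^{2C\sqrt R}N)$ and hence $\delta_R\le C e^{C\sqrt R}\sqrt N\,e^{-\sqrt{RN}/(2C)}$.

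Now I would assemble the bound. On $\{|F(\bt)-\bE[F(\bt)]|>\lambda\}$ we are either off $\cL_{N,R}$ or have $|\tilde F_R(\bt)-\bE[\tilde F_R(\bt)]|>\lambda-\delta_R$, so
\[
\bP(|F(\bt)-\bE[F(\bt)]|>\lambda)\le \bP(\cL_{N,R}^c)+\one\{2\delta_R>\lambda\}+\bP\big(|\tilde F_R(\bt)-\bE[\tilde F_R(\bt)]|>\lambda/2\big),
\]
where $\one\{2\delta_R>\lambda\}\le 2\delta_R/\lambda$ handles the regime in which the mean shift is not negligible. For $\lambda\le C$ I would take $R=R_0$ a fixed constant: then $L_{R_0}=O(1/\sqrt N)$, so the last term is $\le Ce^{-\lambda\sqrt N/C}$, while $\bP(\cL_{N,R_0}^c)$ and $2\delta_{R_0}/\lambda$ are each $\le \lambda^{-1}e^{-\sqrt N/C}$ for $N\ge N_0$ (absorbing polynomial factors into the exponential), which gives the first branch of $q_N$. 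For $\lambda>C$ I would instead choose $R$ with $e^{C\sqrt R}\asymp\sqrt\lambda$, i.e.\ $R\asymp(\log\lambda)^2$: then $L_R\asymp\sqrt{\lambda/N}$, so the last term is $\le C\exp(-c\sqrt{\lambda N})\le e^{-(\log\lambda)\sqrt N/C}$ (taking $C$ large enough that $\sqrt\lambda\gtrsim\log\lambda$ for $\lambda>C$), while $\bP(\cL_{N,R}^c)\le\exp(-c(\log\lambda)\sqrt N)$ since $\sqrt R\asymp\log\lambda$; one checks $\delta_R<\lambda/2$ for all $\lambda>C$ once $N\ge N_0$, killing the Markov term. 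Taking the worst constants produces $q_N(\lambda)$.

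The main obstacle is precisely the tension that forces the two-regime form of $q_N$: enlarging $R$ shrinks the localization failure $\bP(\cL_{N,R}^c)$ but inflates the Lipschitz constant $L_R$ exponentially in $\sqrt R$, degrading the transferred concentration, so $R$ must be tuned to balance these competing effects rather than sent to infinity. This is also what replaces the customary $e^{-\Omega(N)}$ tail by the stated $e^{-\Omega(\sqrt N)}$-type bound, what forces the hypothesis $N\ge N_0$, and what necessitates the secondary Markov correction $\lambda^{-1}e^{-\sqrt N/C}$ to absorb the mean shift in the (exponentially small) range of $\lambda$.
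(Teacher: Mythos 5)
Your proposal is correct and is essentially the paper's own argument: the paper invokes \cite[Lemma~2.5]{BADG06}, whose proof is precisely the McShane--Lipschitz extension off $\cL_{N,R}$ followed by a Cauchy--Schwarz control of the mean shift, and then makes the same two-regime choice $R=R_0$ for $\lambda\le C$ and $R\asymp(\log\lambda)^2$ for $\lambda>C$. You have simply unpacked that cited lemma inline (with a slightly cruder $O(e^{2C\sqrt R}N)$ bound on $\bE[\tilde F_R^2]$, harmlessly absorbed into the $e^{-\sqrt{RN}/(2C)}$ factor), so the content and the tuning of $R$ coincide with the paper's proof.
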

\begin{proof} In proving \cite[Lemma 2.5]{BADG06} it is shown, using a Lipschitz extension, that if 
$\bP$ satisfies exponential concentration for Lipschitz functions as in \eqref{eq:exp-conc}
and $V$ is an $A$-Lipschitz function on a set $\cL$ on which $|V|$ is uniformly bounded by $K$, then for some universal constant $C>0$ and every 
$\lambda>0$,
\begin{equation}\label{eq:lem25-BDG}
\bP(|V-\bE [V]| \ge \lambda) \le C e^{- \lambda/(2 A C)} + \bP(\cL^c)+ \frac{2}{\lambda} 
(\sqrt{\bE [V^2]} + K) \sqrt{\bP(\cL^c)} \,. 
\end{equation}
Recall from Lemma \ref{lem:lifting-Lipschitz} that $\bP= \mu \otimes \bP_\bA \otimes \bP_\bB$ satisfies exponential concentration for 
Lipschitz functions in $(\cE_N, \|\cdot\|_{\textsc{mix}})$ and Proposition \ref{prop:main-lipschitz}
that $V=F(\bt;\cdot)$ is $\frac{D(R)}{\sqrt N}$-Lipschitz on $\cL=\cL_{N,R}$ for $D(R)=C_1 e^{C_1 \sqrt{R}}$, 
for some $C_1 (T, C_\ba, \bC_\star)$ for every $R \ge R_0(T,C_\ba,\bC_\star)$, all $N$, and every $F$, $\bt$ as in Theorem \ref{thm:conc}.

Further, increasing $R_0$ as needed for Lemma~\ref{lem:L-N-R-complement} and 
Proposition~\ref{prop:X-t-localization}, yields 
\begin{align*}
\sup_{(\bX_0,\bJ, \bM)\in \cL_{N,R}} |F(\bt; (\bX_0,\bJ, \bM))| \le K(R) \qquad \mbox{where}\qquad K(R): = C_\ba \max(R,e^{2 C_0 \sqrt{R} T})\,,
\end{align*} 
as well as guaranteeing that $C_2^2 := \sup_{N,\bt} \{\bE [F(\bt)^2]\}$ is finite and that 
$\bP(\cL_{N,R}^c) \le \exp(-\sqrt{RN}/C_3)$ for some $C_3(T, C_\mu, C_{\bA},C_{\bsigma})$. Plugging all this into \eqref{eq:lem25-BDG} gives us 
a family of upper bounds for $R \ge R_0$,
\[
q_N(\lambda;R) =  C e^{- \lambda\sqrt{N}/(2 D(R) C)} + e^{-\sqrt{RN}/C_3} + \frac{2}{\lambda} 
(C_2 + K(R)) e^{-\sqrt{R N}/(2C_3)}  \,.
\]
For $R=R_0$ we can embed the constant factor $2 D(R_0)$ into $C$ and further adjust $C_3$ to 
bound the pre-exponent $2 (C_2+K(R_0))$ within the factor $\exp(-\sqrt{R_0 N}/(2C_3))$ multiplying it,
resulting with $q_N(\lambda;R_0)$ as in the top line on the \abbr{rhs} of \eqref{eq:conc-fixed-bt}.
For a better tail decay, consider $R_\lambda=(\eta \log \lambda)^2 \ge R_0$,  with 
$\eta=1/(2C_1)$ so $D(R_\lambda)=  C_1 e^{C_1 \eta \log \lambda} \le C_1 \lambda/\log \lambda$
for all $\lambda \ge 4$. In addition, once $\sqrt{N}/(2C_3) \ge 4 C_0 T$ we can again embed 
the pre-exponent $2(C_2+K(R_\lambda))/\lambda$ within the factor $\exp(-\sqrt{R_\lambda N}/(2 C_3))$
multiplying it . Thus, upon adjusting the various constants we end up with 
$q_N(\lambda;R_\lambda)$ as in the bottom line on the \abbr{rhs} of \eqref{eq:conc-fixed-bt}.
\end{proof}

Setting hereafter $R$ for the larger of $R_0$ and $R_\lambda$ values from the preceding proof of Proposition \ref{prop:main-concentration}, recall that the event $\cL^c_{N,R}$ 
was already ruled out as part of the derivation of  \eqref{eq:conc-fixed-bt}. Thus, 
proceeding to prove Theorem \ref{thm:conc}, we fix $\varepsilon = N^{-k}$, $k>1$,
and apply Proposition \ref{prop:main-concentration} at the $M_N = \lceil T N^k \rceil^2$ grid 
points $\bt_{i,j}=(i \varepsilon, j \varepsilon)$ within $[0,T]^2$, to deduce by the union bound that 
 \begin{align*}
 \bP(\cL_{N,R}^c) +
 \bP \Big(\sup_{i,j} \big| F(\bt_{i,j}) - \bE [F(\bt_{i,j})]\big| > \lambda, \cL_{N,R} \Big)\leq  M_N \, q_N(\lambda)\,.
\end{align*}
It is easy to check that $2 M_N q_N(\lambda)$ is further bounded by $p_N(3 \lambda)$ of 
\eqref{eq:wts-trajectory-tail-bound} once we suitably enlarge the constant $C$ 
on the \abbr{rhs} of~\eqref{eq:wts-trajectory-tail-bound} relative to that of~\eqref{eq:conc-fixed-bt}. In addition, since the right-most term 
in \eqref{eq:lem25-BDG} exceeds one whenever $\bE[|V| {\bf 1}_{\cL_{N,R}^c}] = \bE [ |F(\bt)| {\bf 1}_{\cL_{N,R}^c}] \ge \lambda/2$, if that inequality holds for any $\bt\in [0,T]^2$, then $q_N(\lambda)$ and in turn $p_N(3\lambda)$ of \eqref{eq:wts-trajectory-tail-bound} would exceed one. Thus, we may assume \abbr{wlog} that 
\begin{equation}\label{eq:trivial-fact}
\sup_{\bt, \bs: \bt+\bs \in [0,T]^2} \{ \bE[|F(\bt+\bs)-F(\bt)| \, {\bf 1}_{\cL^c_{N,R}} ] \} \le 2 \sup_{\bt\in [0,T]^2} \bE[|F(\bt)| \one_{\cL_{N,R}^c}] \le \lambda \,.
\end{equation}
 We can then expand 
\begin{align*}
\bP \Big(\sup_{\bt \in [0,T]^2} \big| F(\bt) - \bE [F(\bt)]\big| > 3 \lambda \Big) &  \le  \bP(\cL_{N,R}^c) + \bP \Big( \sup_{i,j} |F(\bt_{i,j}) - \bE [ F(\bt_{i,j})]| > \lambda , \cL_{N,R}\Big) \\
& \quad + M_N \sup_{i,j} \mathbb P \Big(  \sup_{\bs \in [0,\varepsilon]^2} |F(\bt_{i,j}+\bs) - F(\bt_{i,j})| >\lambda, \cL_{N,R} \Big) \\ 
& \quad + \mathbf 1\Big\{\sup_{\bt,\bs} \mathbb E[ |F(\bt + \bs) - F(\bt) |\mathbf 1_{\cL_{N,R}}]>\lambda\Big\}\,.   
\end{align*}
Restricting to $\lambda > 1/\sqrt{N}$ (as otherwise $p_N(3 \lambda) \ge 1$), and using  
$p_N(3 \lambda) \gg  M_N \exp(- (\lambda^2 \wedge \lambda) N^k/C')$ (as $k>1$) with the above, 
 the stated bound of Theorem \ref{thm:conc}, 
follows from the following short-time estimates. 

\begin{lem}
There exists $C'(C_\bsigma)$, such that for every $\varepsilon \le  1$, $\lambda \ge C' \varepsilon$, 
and $F$ as in Theorem \ref{thm:conc}, 
\begin{align}\label{eq:equi-cont}
\sup_{\bt \in [0,T-\varepsilon]^2} \bP\Big(\sup_{\bs \in [0,\varepsilon]^2} |F(\bt+\bs)-F(\bt)| > \lambda, \, \cL_{N,R}\Big) 
 \le 2 e^{- (\lambda^2 \wedge \lambda) /(C' \varepsilon)} \,.
\end{align}
In particular, for any $N \ge N_0(T,C_\ba,C_\mu, \bC_\star)$ and $\lambda \ge N^{-1/2} = \varepsilon^{1/(2k)}$,
$k>1$,
\begin{align}\label{eq:equi-cont-mean}
\sup_{\bt \in [0,T-\varepsilon]^2, \bs \in [0,\varepsilon]^2} \bE \big[ |F(\bt+\bs)-F(\bt)| {\bf 1}_{\cL_{N,R}}
\big]
 \le \lambda \,.
\end{align}
\end{lem}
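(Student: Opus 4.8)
The plan is to prove the short-time continuity estimate \eqref{eq:equi-cont} first, and then deduce \eqref{eq:equi-cont-mean} by integrating the tail. For \eqref{eq:equi-cont}, the starting point is to express, for $\cY,\cY' \in {\mathfrak F}$ and $\bt = (t_1,t_2)$, $\bs = (s_1,s_2)$,
\[
F(\bt+\bs) - F(\bt) = \frac 1N \sum_i a_i \big(\cY_i(t_1+s_1) - \cY_i(t_1)\big) \cY_i'(t_2+s_2) + \frac 1N \sum_i a_i \cY_i(t_1)\big(\cY_i'(t_2+s_2) - \cY_i'(t_2)\big)\,,
\]
so by Cauchy--Schwarz and the triangle inequality it suffices to control $\sup_{|s|\le\varepsilon, t\le T}\frac 1{\sqrt N}\|\cY(\bX_{t+s}) - \cY(\bX_t)\|$ for each $\cY\in{\mathfrak F}$, on the event $\cL_{N,R}$ where $\frac 1{\sqrt N}\sup_t\|\cY(\bX_t)\| \le e^{C_0\sqrt R T}$ by Proposition~\ref{prop:X-t-localization}. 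On $\cL_{N,R}$ (which I have already chosen $R = \max(R_0, R_\lambda)$ depending on $\lambda$), the increments of $\bX_t$ over $[t, t+s]$ decompose via \eqref{eq:sde-def} into a drift part and the martingale part $\bM_{t+s}-\bM_t$. The drift part, being $\int_t^{t+s}(\bG(\bX_u) + \bLambda \bX_u + \bh)du$, has norm at most $\varepsilon\,(\sqrt R + C_\bLambda + C_\bh)\sqrt N\, e^{C_0\sqrt R T}$ deterministically on $\cL_{N,R}$, using $\|\bJ\|_{2\to2}^2 \le R$ there and \eqref{eq:G-to-X}, hence is $\le \lambda\sqrt N/2$ once $\lambda \ge C'\varepsilon$ for suitable $C'$ (absorbing the $e^{C\sqrt R}$ into the constant is the only subtlety, and since $R = R_\lambda = (\eta\log\lambda)^2$ this factor is polynomial in $\lambda$, harmless against $\varepsilon^{-1}$). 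The analogous estimates hold for $\cY = \bG$, since $\|\bG(\bX_{t+s}) - \bG(\bX_t)\| \le \|\bJ\|_{2\to2}\|\bX_{t+s}-\bX_t\| \le \sqrt R \|\bX_{t+s}-\bX_t\|$ by the left inequality of \eqref{eq:G-Lipschitz}, and for $\cY = \one$ it is trivially zero.

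What remains is the martingale increment: I need $\bP_\bB\big(\sup_{|s_2|\le\varepsilon,\,t_2\le T}\frac 1{\sqrt N}\|\bM_{t_2+s_2} - \bM_{t_2}\| > \lambda/C''\big) \le e^{-(\lambda^2\wedge\lambda)/(C'\varepsilon)}$. Here $\bM_t = \sqrt 2\,\mathrm{diag}(\sigma_{0j})\bB_t$ is a time-homogeneous Brownian motion with coordinatewise variance rate at most $2C_\bsigma^2$. Covering $[0,T]$ by $\lceil T/\varepsilon\rceil$ intervals of length $\varepsilon$ and using the independence and stationarity of Brownian increments, the event $\sup_{|s|\le\varepsilon}\|\bM_{t+s}-\bM_t\|^2 > \lambda^2 N$ over a fixed length-$\varepsilon$ window is governed by $\sup_{u\le 2\varepsilon}\|\tilde\bM_u\|^2$ for $\tilde\bM$ a fresh Brownian motion; Doob's $L^2$-maximal inequality applied to the submartingale $\exp(\delta\|\tilde\bM_u\|^2)$ (exactly as in the derivation of \eqref{eq:bd-M-sup}, but on time horizon $2\varepsilon$ rather than $T$) gives a bound of the form $C\exp(-c\lambda^2 N/\varepsilon)$ for $\lambda\le 1$ and $C\exp(-c\lambda\sqrt{N}/\varepsilon)$-type decay for large $\lambda$; combined with the $\lceil T/\varepsilon\rceil$-fold union bound and the fact that $\varepsilon = N^{-k}$ makes $N/\varepsilon$ enormous, this easily absorbs the prefactors into $2e^{-(\lambda^2\wedge\lambda)/(C'\varepsilon)}$ after adjusting $C'$. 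I expect this martingale maximal estimate to be the main technical point, though it is standard; the bookkeeping of constants (ensuring the $R_\lambda$-dependent $e^{C\sqrt{R_\lambda}}$ factors stay polynomial in $\lambda$ and are dominated by $\varepsilon^{-1}$) is the only place one must be careful.

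Finally, \eqref{eq:equi-cont-mean} follows from \eqref{eq:equi-cont} by integrating: for fixed $\bt,\bs$ with $\bt+\bs\in[0,T]^2$ and $\varepsilon = N^{-k}$, write
\[
\bE\big[|F(\bt+\bs)-F(\bt)|\,{\bf 1}_{\cL_{N,R}}\big] \le \lambda_0 + \int_{\lambda_0}^\infty \bP\big(|F(\bt+\bs)-F(\bt)| > u,\,\cL_{N,R}\big)\,du
\]
with $\lambda_0 = N^{-1/2} = \varepsilon^{1/(2k)}$. Since $\varepsilon \le 1$ and $k>1$ we have $\lambda_0 = \varepsilon^{1/(2k)} \ge C'\varepsilon$ for all $N$ large, so \eqref{eq:equi-cont} applies on the whole range of integration, giving $\int_{\lambda_0}^\infty 2e^{-(u^2\wedge u)/(C'\varepsilon)}du$, which since $C'\varepsilon \le C' N^{-k}$ and $\lambda_0^2 = N^{-1} \gg N^{-k}$ is at most $2 C'\varepsilon\, e^{-\lambda_0^2/(C'\varepsilon)} \ll N^{-1/2}$ for $N \ge N_0(T,C_\ba,C_\mu,\bC_\star)$. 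Hence the whole expression is $\le 2 N^{-1/2} = 2\lambda_0 \le \lambda$ for any $\lambda \ge N^{-1/2}$, which is \eqref{eq:equi-cont-mean}. This in turn is exactly the input \eqref{eq:trivial-fact} needed to close the proof of Theorem~\ref{thm:conc} in the main text.
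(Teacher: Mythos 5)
Your plan tracks the paper's until the martingale step, where you misread the quantifier structure of \eqref{eq:equi-cont}: the supremum over $\bt$ is \emph{outside} the probability, so for each fixed $\bt=(t_1,t_2)$ one only needs $\bP_\bB\big(\sup_{s\le\varepsilon}\|\bM_{t_i+s}-\bM_{t_i}\|>L\sqrt N\big)$ — which, by stationarity of Brownian increments, is bounded uniformly in $t_i$ with no union bound (this is exactly \eqref{eq:bd-M-diff-sup} in the paper). You instead place $\sup_{t_2\le T}$ \emph{inside} the probability and compensate with a union bound over $\lceil T/\varepsilon\rceil$ windows, justifying the prefactor's absorption by ``$\varepsilon = N^{-k}$ makes $N/\varepsilon$ enormous.'' But \eqref{eq:equi-cont} is asserted for \emph{every} $\varepsilon\le 1$ and every $N$, with $\varepsilon$ not tied to $N$. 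When $\lambda$ is near the lower threshold $C'\varepsilon$ and $\varepsilon\to 0$ with $N$ fixed, the $T/\varepsilon$ prefactor overwhelms $e^{-c(\lambda^2\wedge\lambda)/\varepsilon}\approx e^{-cC'^2\varepsilon}$, so the union bound does not recover the stated two-sided constant. The statement you would prove (with $\sup_{t_2}$ inside) is strictly stronger in the event but holds in strictly narrower generality; dropping the interior supremum and invoking stationarity is both simpler and what the lemma actually asks for.

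There is also a small but genuine arithmetic gap in your derivation of \eqref{eq:equi-cont-mean}. Splitting the tail integral at $\lambda_0 = N^{-1/2}$ gives $\bE[\cdot]\le \lambda_0 + (\text{exponentially small})$, and you conclude ``$\le 2\lambda_0\le \lambda$ for any $\lambda\ge N^{-1/2}$''; but if $\lambda = N^{-1/2} = \lambda_0$ exactly, then $2\lambda_0 = 2\lambda>\lambda$, so the chain of inequalities breaks. The remedy is to split at $\lambda/2$ rather than at $\lambda_0$: since $\lambda/2\ge N^{-1/2}/2\ge C'N^{-k}=C'\varepsilon$ once $N\ge N_0$, the bound \eqref{eq:equi-cont} still applies on $[\lambda/2,\infty)$, yielding $\lambda/2 + (\text{tiny})\le \lambda$. (The paper instead deduces \eqref{eq:equi-cont-mean} via Cauchy--Schwarz against the uniform $L^2$ bound \eqref{eq:bd-F-sup}, which also splits at $\lambda/2$; your tail-integration route is equally valid once the split point is corrected.)
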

\begin{proof} Similarly to the computation leading to \eqref{eq:bd-F-diff-LNR}, we find that for any 
$\bt+\bs \in [0,T]^2$ and $F$ as in Theorem~\ref{thm:conc}, evaluated on 
the solution $\bX_t(\bX_0,\bJ,\bM)$ that corresponds to some $(\bX_0,\bJ,\bM) \in \cL_{N,R}$
\[
|F(\bt+\bs
)-F(\bt
)| \le \frac{2 C_\ba e^{C_0 \sqrt{R} T}}{\sqrt{N}} \,
\max_{\cY \in {\mathfrak F} } \max_{i = 1,2}  \; \{ \|\cY(t_i+s_i) -\cY(t_i)\| \} \,.
\]
When $\cY={\bf 1}$ this difference is zero, whereas in case $\cY = \bX$ and $s_i \le \varepsilon$,
assuming \abbr{wlog} that $R_0,  \bC_\star \ge 1$,  we 
have on $\cL_{N,R}$, by \eqref{eq:x-t-bound-localized} and the \abbr{rhs} of \eqref{eq:G-to-X}, that 
\begin{align}
    \| \bX_{t_i +s_i} - \bX_{t_i} \| &\leq \|\bM_{t_i + s_i} - \bM_{t_i} \| + \int_{t_i}^{t_i+ s_i} 
    \,[\,\|\bG_u\|+ C_{\bLambda} \| \bX_u\| + \sqrt{N} C_\bh \,]\, du \nonumber \\
   &\leq \|\bM_{t_i + s_i} - \bM_{t_i} \| + 3 \varepsilon \sqrt{N} \bC_\star e^{C_0 \sqrt{R} T}  \,.
   \label{eq:X-t-s}
\end{align}
Further, similarly to the \abbr{lhs} of \eqref{eq:G-Lipschitz}, on $\cL_{N,R}$,
\[
  \| \bG_{t_i +s_i} - \bG_{t_i} \| \le  \|\bJ\|_{2 \to 2} \| \, \bX_{t_i +s_i} - \bX_{t_i} \|
  \le \sqrt{R} \| \, \bX_{t_i +s_i} - \bX_{t_i} \| \,,
\]
so up to extra factor $\sqrt R$ the bound \eqref{eq:X-t-s} applies for $\cY=\bG$, and 
considering all cases we get for $\bs \in [0,\varepsilon]^2$, 
\begin{equation}\label{eq:F-t-s-temp}
|F(\bt+\bs)-F(\bt)| \le 
\frac{2 C_\ba \sqrt{R} e^{C_0 \sqrt{R} T}}{\sqrt{N}} \max_{i=1,2} \|\bM_{t_i + s_i} - \bM_{t_i} \|
+6 \varepsilon C_\ba \bC_\star \sqrt{R} e^{2 C_0 \sqrt{R} T} \,.
\end{equation} 
For some $C'>0$, when $R=R_0$ and $\lambda \ge C' \varepsilon$, the right most term in 
\eqref{eq:F-t-s-temp} can not exceed $\lambda/2$. The same 
applies for $R=R_\lambda = (\eta \log \lambda)^2$ provided $\eta \le 1/(3C_0 T)$. By the same 
reasoning, for such $\eta$ and some $C_4(T, C_\ba, R_0)>0$, the factor multiplying 
$\|\bM_{t_i + s_i} - \bM_{t_i} \|$ in \eqref{eq:F-t-s-temp}, is in both cases at most $(\sqrt{\lambda} \vee 1)/(2 C_4 \sqrt{N})$. Recall from~\eqref{eq:bd-M-sup} and the stationarity of Brownian increments,
that there exists $C(C_\bsigma)$ such that for every $L \ge \varepsilon^2 L_0(C_\bsigma)$, every $N$,
\begin{equation}\label{eq:bd-M-diff-sup}
\sup_{t\in [0,T-\varepsilon]} \bP_{\bB} \Big(\sup_{s \in [0,\varepsilon]}  \{ \|\bM_{t+s} - \bM_t\| \} > L \sqrt{N}\Big) 
\le e^{- 3 L^2/(C \varepsilon)} \,.
\end{equation}
Combining \eqref{eq:F-t-s-temp} and \eqref{eq:bd-M-diff-sup}, we thus get that for 
some $C'(C_\bsigma)$, for every $\lambda \ge C' \varepsilon$, and every $N$, $\bt=(t_1,t_2)$,
\begin{align*}
\bP\Big(\sup_{\bs \in [0,\varepsilon]^2} |F(\bt+\bs)-F(\bt)| > \lambda, \, \cL_{N,R}\Big)  & \le 2 \max_{i = 1,2} 
\bP\Big(\sup_{s \in [0,\varepsilon]}  \|\bM_{t_i+s} - \bM_{t_i}\| > 
C_4 (\lambda \wedge \sqrt{\lambda}) \sqrt{N}\Big) \\
&  \le 2 e^{- (\lambda^2 \wedge \lambda) /(C' \varepsilon)} \,,
\end{align*}
as claimed in \eqref{eq:equi-cont}. 
Next, by Cauchy-Schwarz, \eqref{eq:bd-F-sup} and \eqref{eq:equi-cont},
there exists $C(T, C_\ba, C_\mu, \bC_\star)$ such that for every $N \ge N_0(T, C_\ba, C_\mu, \bC_\star)$, every $\lambda \ge 2 C' \varepsilon$, every $\bt$, $\bs$ and all $F$,
\begin{align*}
\bE \big[ |F(\bt+\bs)-F(\bt)| {\bf 1}_{\cL_{N,R}}
\big] & \le \frac{\lambda}{2} + 2 \, \bP\big(|F(\bt+\bs)-F(\bt)| > \frac{\lambda}{2}, \, \cL_{N,R}\big)^{1/2} 
\sup_{\bt \in [0,T]^2} \{ \sqrt{\bE[F(\bt)^2]} \} \\
& \le \frac{\lambda}{2} + C e^{- (\lambda^2 \wedge \lambda) /(4 C' \varepsilon)} \,.
\end{align*}
Our assumption that $\lambda \ge \varepsilon^{1/(2k)}$ for some $k>1$ guarantees that the
right most term is at most $\lambda/2$ (as soon as $N \ge N_0$), thereby establishing 
\eqref{eq:equi-cont-mean}.
\end{proof}

\bibliographystyle{abbrv}
\bibliography{universality-diffusions}

\end{document}